\newtheorem{theorem}{Theorem}
\newtheorem{lemma}[theorem]{Lemma}
\newtheorem{proposition}{Proposition}
\newtheorem{remark}{Remark}
\theoremstyle{definition}
\newtheorem{definition}{Definition}
\newtheorem{assumption}[definition]{Assumption}
\newcommand{\ev}{ \accentset{\star}}
\newcommand{\basisfct}[2]{{\mathrm{B}}^{(\mathit{#1})}_{#2}}
\newcommand{\basisfctSigma}[2]{{\overline{\mathrm{B}}}^{(\mathit{#1})}_{#2}}
\newcommand{\basisfctSigmaTilde}[2]{{\widetilde{\mathrm{B}}}^{(\mathit{#1})}_{#2}}
\newcommand{\basisfctX}[2]{{\ev{\mathrm{B}}}^{(\mathit{#1})}_{#2}}
\newcommand{\basisfctXdual}{\ev{\Lambda}}
\newcommand{\ibasis}{c}
\newcommand{\igf}{\varphi_h}
\newcommand{\Du}{\partial_{1}}
\newcommand{\Dv}{\partial_{2}}
\newcommand{\RR}{\mathbb R}
\newcommand{\f}[1]{\mathbf{#1}}
\newcommand{\fn}{\mathbf{d}}
\newcommand{\V}{\mathcal{V}}
\newcommand{\graph}{\Phi}
\newcommand{\II}{\mathbb{I}}
\newcommand{\s}{\scriptstyle}
\newcommand{\indexOmega}{\mathcal{I}_{\Omega}}
\newcommand{\indexSigma}{\mathcal{I}_{\Sigma}}
\newcommand{\indexSigmaint}{\mathcal{I}^\circ_{\Sigma}}
\newcommand{\indexSigmabound}{\mathcal{I}^{\Gamma}_{\Sigma}}
\newcommand{\indexX}{\mathcal{I}_{\mathcal{X}}}
\newcommand{\indexXint}{\mathcal{I}^\circ_{\mathcal{X}}}
\newcommand{\indexXbound}{\mathcal{I}^{\Gamma}_{\mathcal{X}}}
\newcommand{\tupleSigma}[1]{\mathcal{T}_{\Sigma^{(#1)}}}
\newcommand{\tupleX}[1]{\mathcal{T}_{\f x^{(#1)}}}
\newcommand{\cupdot}{\,\dot{\cup}\,}
\newcommand{\Spr}{\mathcal{S}^{\f p,\f r}_h}
\newcommand{\Spru}{\mathcal{S}^{p,r}_h}
\newcommand{\W}{\mathcal{A}}
\newcommand{\lW}[1]{\W^{(\ii_{#1})}_{{\f x}^{(i)}}}
\newcommand{\ii}{\imath}
\newcommand{\kk}{\kappa}
\newcommand{\A}{\mathcal{A}}
\begin{document}

\begin{frontmatter}
  
\title{The Argyris isogeometric space on unstructured
  multi-patch planar domains}

\author[1]{Mario Kapl}
\author[2,3]{Giancarlo Sangalli}
\author[4]{Thomas Takacs}

\address[1]{Johann Radon Institute for Computational and Applied Mathematics, Austrian Academy of Sciences, Austria}
\address[2]{Dipartimento di Matematica ``F. Casorati'', Universit\`a degli Studi di Pavia, Italy}
\address[3]{Istituto di Matematica Applicata e Tecnologie Informatiche
  ``E. Magenes'' (CNR), Italy}
\address[4]{Institute of Applied Geometry, Johannes Kepler University Linz, Austria}

\begin{abstract} 
Multi-patch spline parametrizations are used in geometric design and isogeometric
  analysis to represent complex domains. 
We deal with a particular class of $C^0$  planar multi-patch spline parametrizations called analysis-suitable $G^1$ (AS-$G^{1}$) multi-patch parametrizations (cf. \cite{CoSaTa16}). 
This class of parametrizations has to satisfy specific geometric
continuity constraints, and is of importance since it allows to
construct, on the multi-patch domain, $C^1$ isogeometric spaces with optimal approximation properties. It was demonstrated in \cite{KaSaTa17b} that AS-$G^1$ multi-patch parametrizations 
are suitable for modeling 
complex planar multi-patch domains. 

In this work, we construct a basis, and an associated dual basis, for
a specific $C^1$ isogeometric spline space~$\W$ over a
given AS-$G^1$ multi-patch parametrization. {We call the space $\W$ the Argyris isogeometric space, since it is $C^1$ across interfaces and $C^2$ at all vertices and generalizes the 
idea of Argyris finite elements (see \cite{argyris1968tuba}) to tensor-product splines.} The considered space~$\W$ is a subspace of the entire $C^1$ isogeometric space~$\mathcal{V}^{1}$, 
which maintains the reproduction properties of traces and normal derivatives along the interfaces. {Moreover, it reproduces all derivatives up to second order at the vertices.}
In contrast to $\mathcal{V}^{1}$, the dimension of 
$\W$  does not depend on the domain parametrization, and
$\W$  admits a  basis and dual
basis which  possess a simple explicit 
representation and   local support. 

{We conclude the paper with some numerical experiments, which exhibit the optimal approximation order of the Argyris isogeometric space $\W$ and demonstrate the applicability of our approach for isogeometric analysis.}
\end{abstract}
\begin{keyword}
Isogeometric Analysis \sep  Argyris isogeometric space \sep 
analysis-suitable $G^{1}$ parametrization \sep planar multi-patch domain
\end{keyword}
\end{frontmatter}

\section{Introduction} \label{sec:introduction}

Multi-patch spline parametrizations are a powerful tool in computer-aided geometric design for modeling complex domains (cf. \cite{Fa97,HoLa93}). In the framework of isogeometric 
analysis (IGA) (cf. \cite{ANU:9260759,CottrellBook,HuCoBa04}) the underlying spline spaces of these parametrizations are used to define (smooth) discretization spaces for numerically 
solving partial differential equations (PDEs) over the multi-patch domains. When solving a fourth order PDE, such as the biharmonic equation, e.g. 
\cite{BaDe15, CoSaTa16, KaBuBeJu16, KaViJu15, TaDe14},  the Kirchhoff-Love plate/shell problem, e.g. 
\cite{ABBLRS-stream,benson2011large,kiendl-bazilevs-hsu-wuechner-bletzinger-10,kiendl-bletzinger-linhard-09,KiHsWuRe15}, or the Cahn-Hilliard equation, e.g. 
\cite{gomez2008isogeometric,GoCaHu09,LiDeEvBoHu13}, by means of its
weak formulation using a standard Galerkin projection, approximating functions of
global $C^1$-smoothness are needed.

In this work we construct an isogeometric space with global  $C^1$
regularity over an unstructured   multi-patch
parametrization. Our construction takes inspiration from   the
Argyris finite element \cite{argyris1968tuba}, which has been
the progenitor of all the $C^1$ triangular finite elements. The
original Argyris construction uses polynomials of total degree $5$ and
the following degrees-of-freedom: The function values, first and
second derivatives at the element vertices, and the normal derivatives at the
element edge midpoints. These degrees-of-freedom  determine
the trace at the  edges (a polynomial of degree $5$ on each edge) and the normal
derivatives at the edges (a polynomial of degree $4$), and by that, 
determine the polynomial on the whole triangle (see Figure
\ref{fig:Argyris-triangle-and-patch-a}).  $C^1$
regularity follows from the gluing of the normal derivative at the edges. 

After Argyris, other $C^1$ constructions on triangular meshes have been
proposed,  in the context of finite elements
(see  for example the book \cite{ciarlet2002finite}).  The Argyris triangular
element has been used only recently for
surface parametrizations, in \cite{jaklivc2017hermite}. However, 
splines on triangular meshes are commonly used in geometric design,  often with  $C^2$ smoothness at the mesh
vertices, see the book \cite{lai2007spline} for references.

In the finite element literature, the results on
 $C^1$  quadrilateral elements are restricted  to meshes of structured
rectangles, where the first and most well-known construction is the
Bogner-Fox-Schmit element, introduced in \cite{bogner1965generation}.
Isogeometric analysis has been  reinvigorating this
interest: The recent papers
\cite{CoSaTa16,KaBuBeJu16,KaSaTa17,KaViJu15} and the book \cite{BeMa14}
shed light on the conditions of global $C^1$ regularity for
isogeometric spaces on unstructured quadrilateral meshes,
showing at the same time that these spaces are difficult to characterize. This
motivates our present work: We  design a basis and the related degrees-of-freedom
for a subspace of the complete  $C^1$ isogeometric
space, mimicking  the Argyris construction. The space we propose is 
therefore  named \emph{Argyris isogeometric space}  and denoted by $\A$.

Consider a quadrilateral element  $\bar \Omega^{(i)} =\f F^{(i)} ([0,1]^2)$, i.e., given
by a bilinear mapping of the reference square element. The  lowest-degree  polynomial version
of  $\A$  contains functions $\varphi_h$ such that $\varphi_h \circ \f F^{(i)}$
is a (bi)quintic polynomial,
and $ (\nabla  \varphi_h\cdot \f d) \circ \f F^{(i)}$ is a quartic polynomial
on the edges of the reference element. There are two main differences with
respect to the original Argyris triangle. The first is obvious:  $\varphi_h |_{\bar
  \Omega^{(i)}} $  is not polynomial since $\f F^{(i)} $ is not
linear,  in general.  The second is technical:  $  \f d$ is not the
normal unitary vector to the
edges of $\bar \Omega^{(i)} $, it is instead a suitable  non-constant
direction,  shared with the adjacent
quadrilateral and dependent on it. The degrees-of-freedom are indeed:
\begin{itemize}
\item $24$ (vertex) degrees-of-freedom giving the value, first and second derivatives at each vertex, fully
  determining the trace of $\varphi_h$ at the boundary of the
  element,
\item  $4$ (edge) degrees-of-freedom, one per edge, that with the previous
  information fully determine $(\nabla  \varphi_h\cdot \f d) \circ \f F^{(i)}$  as a quartic polynomial
on each edge,
\item $4$ (interior) degrees-of-freedom that, with the trace and
  directional derivative given at the boundary,  fully
  determine $\varphi_h$.
\end{itemize}
The total number of degrees-of-freedom is $32$ and they are depicted in  Figure
\ref{fig:Argyris-triangle-and-patch-c}. However the condition
that $(\nabla  \varphi_h\cdot \f d) \circ \f F ^{(i)}$ is a quartic polynomial on
each edge is responsible for $4$ additional scalar constraints due to degree elevation to quintic polynomials. 
The degrees-of-freedom together with the constraints give $36$
conditions, which matches the dimension of the (bi)quintic polynomial space.
We remark that the boundary degrees-of-freedom above correspond to the ones
of the original Argyris triangular element, with  $  \f d$ replacing  $  \f n$
(as for the Argyris triangular element, these edge degrees-of-freedom depend on
the mesh and do not admit a universal definition on the reference
element).    Interior degrees-of-freedom appear also
in the higher-degree  Argyris triangular element, see Figure
\ref{fig:Argyris-triangle-and-patch-b}. Our
construction is generalizable to any degree $p\geq 5$, see  Figure
\ref{fig:Argyris-triangle-and-patch-d}.

\begin{figure}
 \centering
 \subfigure[Triangular element for degree $5$.]{\begin{picture}(200,100)
    \put(0,0){\includegraphics[width=0.37\textwidth]{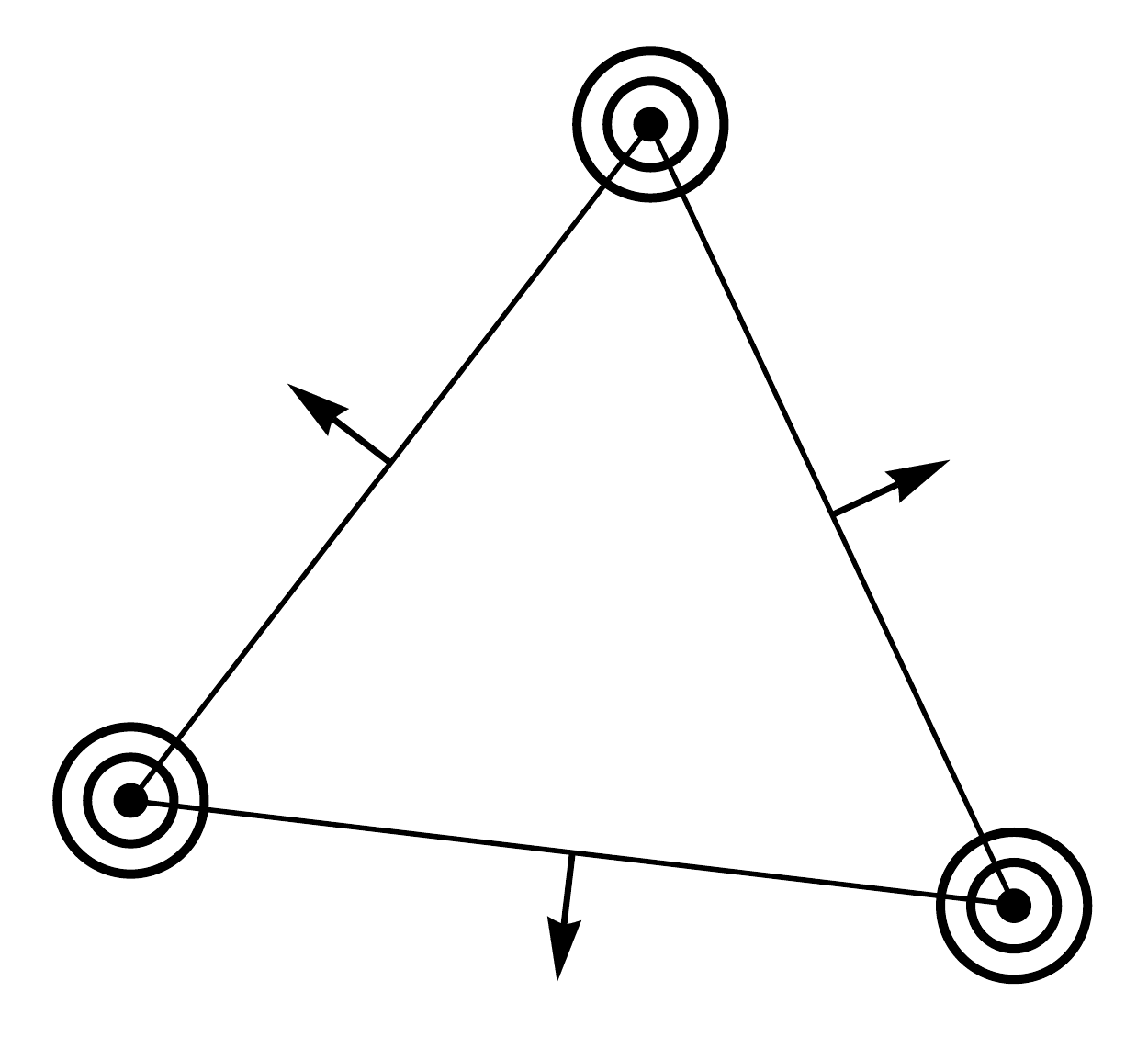}\label{fig:Argyris-triangle-and-patch-a}}
    \put(145,70){$\f n$}
  \end{picture}}
 \hspace{0.1\textwidth}
 \subfigure[Triangular element for degree $6$.]{\begin{picture}(190,110)
    \put(0,0){\includegraphics[width=0.37\textwidth]{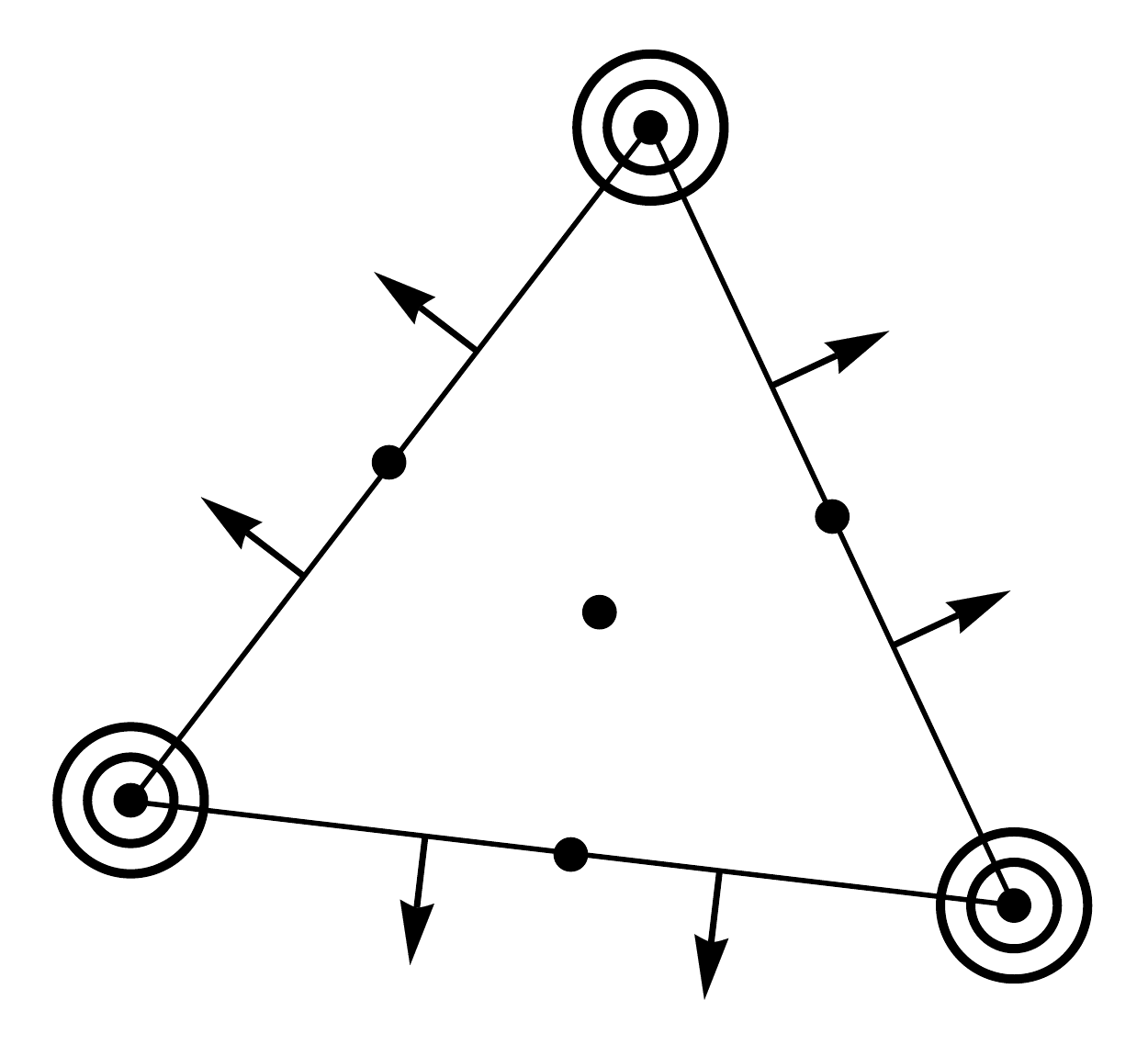}\label{fig:Argyris-triangle-and-patch-b}}
    \put(135,93){$\f n$}
  \end{picture}}
 \\
 \vspace{20pt}
 \subfigure[Quadrilateral element for bidegree $5$.]{\begin{picture}(190,180)
    \put(0,0){\includegraphics[width=0.4\textwidth]{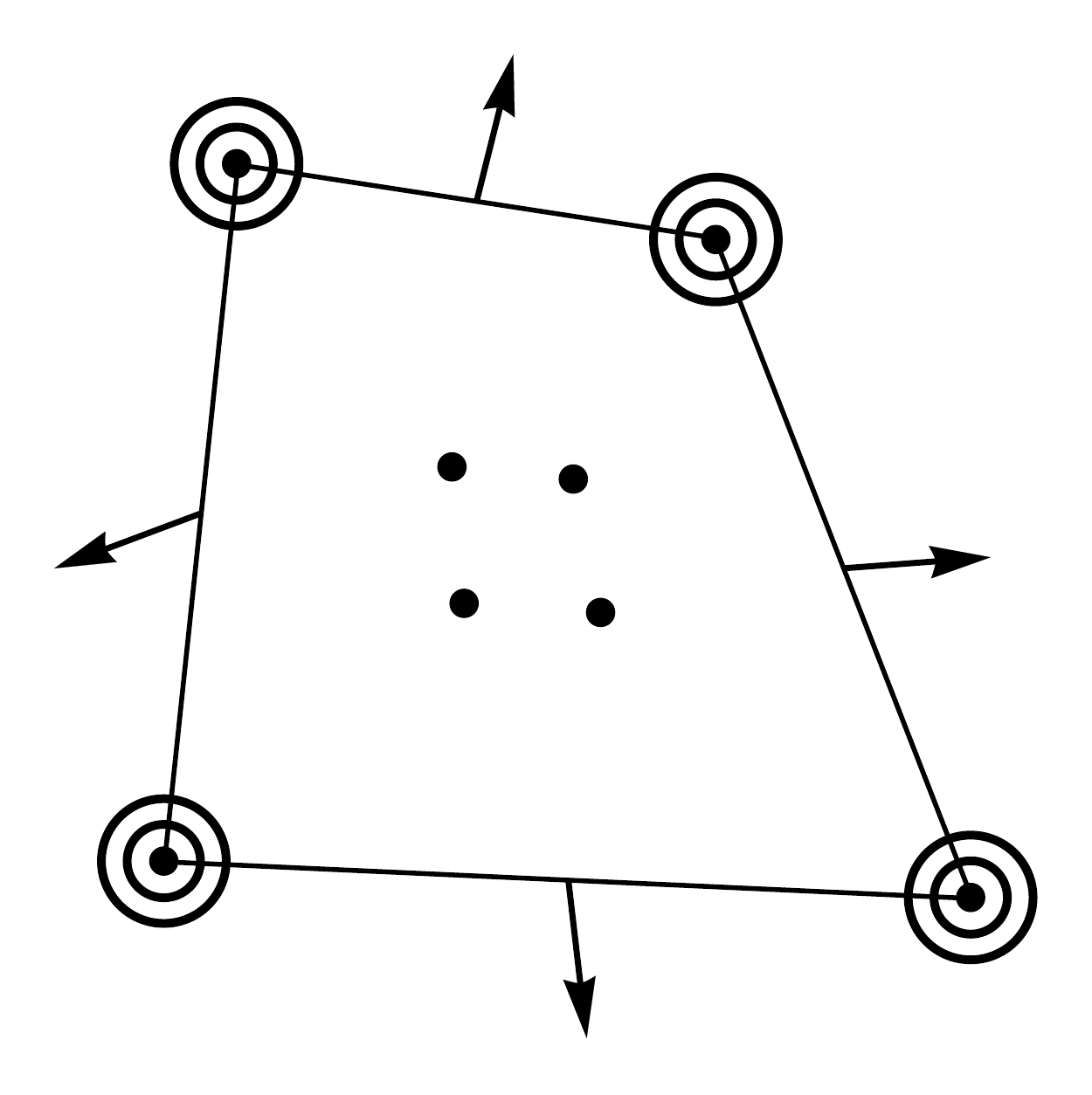}\label{fig:Argyris-triangle-and-patch-c}}
    \put(166,78){$\f d$}
  \end{picture}}
 \hspace{0.1\textwidth}
 \subfigure[Quadrilateral element for bidegree $6$.]{\begin{picture}(190,180)
    \put(0,0){\includegraphics[width=0.4\textwidth]{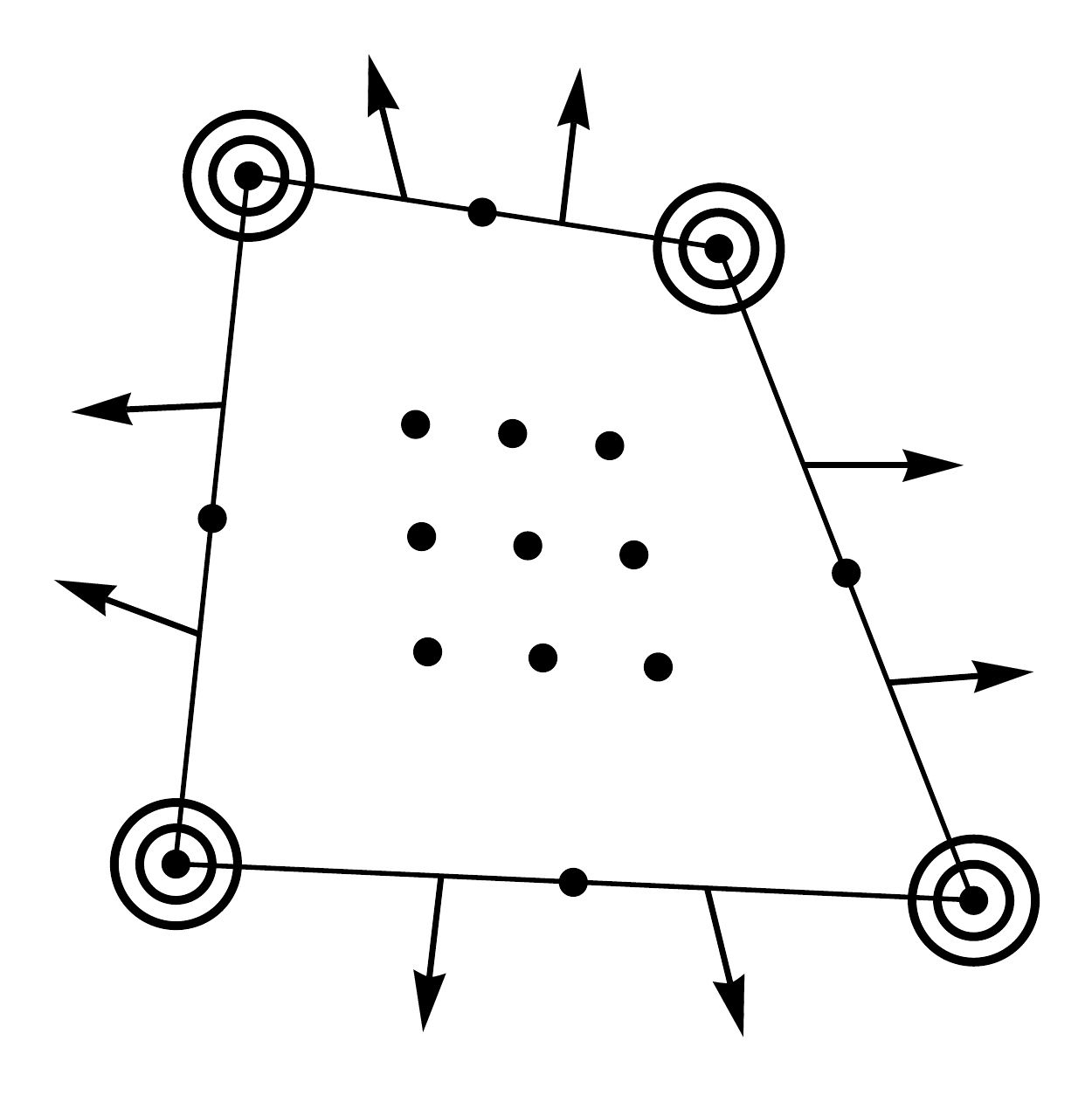}\label{fig:Argyris-triangle-and-patch-d}}
    \put(165,93){$\f d$}
  \end{picture}}\\
 \vspace{20pt}
 \subfigure[Isogeometric element for bidegree $3$.]{\begin{picture}(190,180)
    \put(0,0){\includegraphics[width=0.43\textwidth]{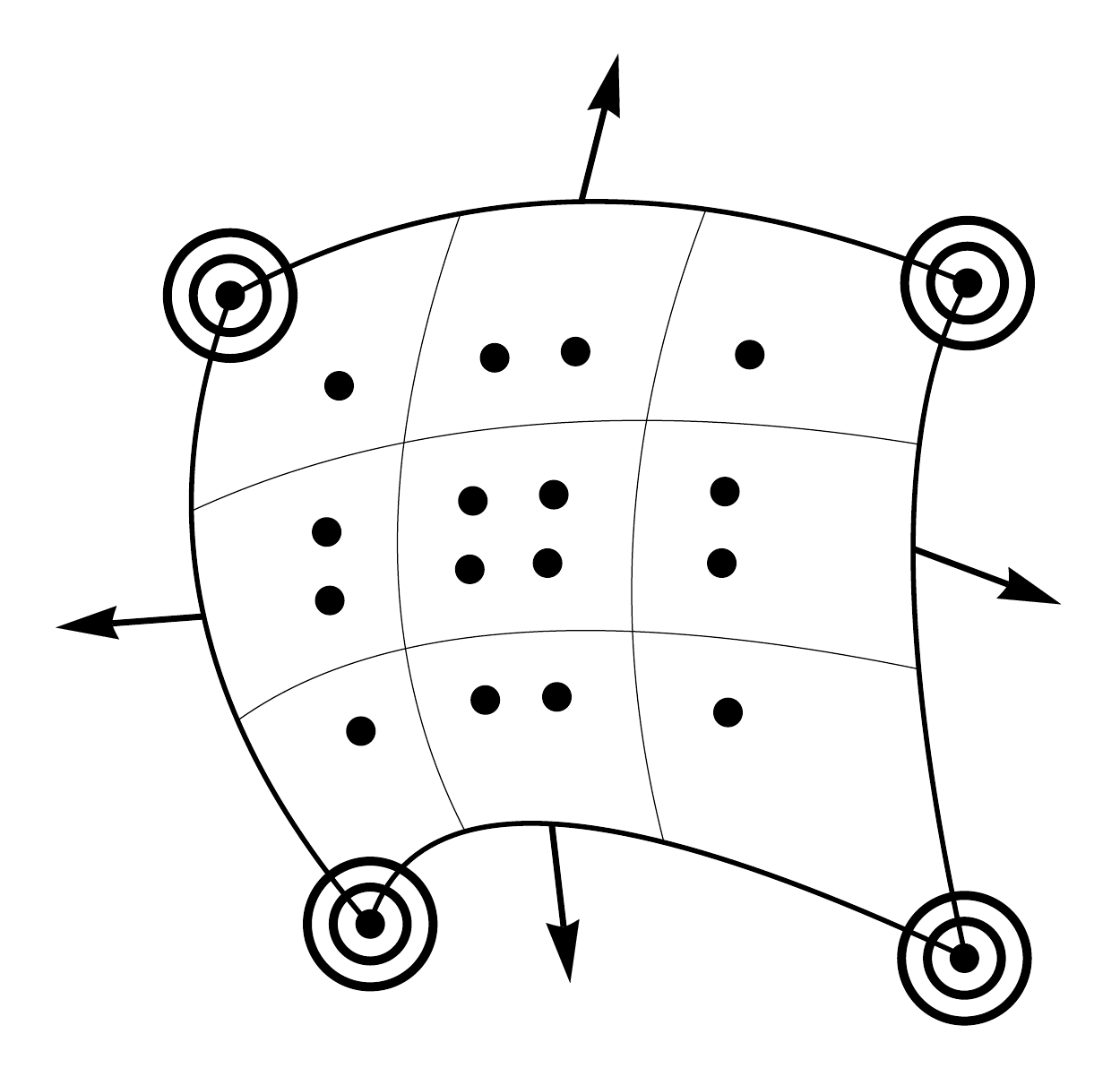}\label{fig:Argyris-triangle-and-patch-e}}
    \put(185,73){$\f d$}
  \end{picture}}
  \caption{Argyris-type finite elements, with associated vertex,
    edge and interior degrees-of-freedom. Note that for the
    quadrilateral and isogeometric elements the derivative in normal
    direction $\f n$ has to be replaced by the derivative in a
    transversal direction $\f d$ (see Definition \ref{def:edge-space-dual-proj}).}
  \label{fig:Argyris-triangle-and-patch}
\end{figure}

The construction above further extends in two ways. The
first is that, instead of a quadrilateral element, we can allow a
B\'ezier patch, that is $\f F^{(i)} $ can be a $p$-degree polynomial. However there are compatibility conditions
between adjacent patch  parametrizations
that guarantee that the  Argyris isogeometric space does not get
overconstrained: We
need the $\f F^{(i)} $ to form an \emph{analysis-suitable $G^1$} multi-patch
parametrization (see \cite{CoSaTa16}). 
The second extension is from a polynomial space (on each patch) to a
tensor-product spline space. Not only is this important in
isogeometric analysis, it also allows a degree reduction. Indeed we can
construct an  Argyris isogeometric patch from (bi)cubic $C^1$
continuous splines, see  Figure
\ref{fig:Argyris-triangle-and-patch-e}.

In this paper we show that  the Argyris isogeometric space is well
defined by  constructing a suitable basis and
dual basis possessing desirable properties, such as local
support and an explicit representation, which can be 
evaluated and manipulated easily. 

A key ingredient of our approach is the determination of the compatibility
conditions that the parametrizations $\f F^{(i)} $  of the patches
need to fulfill in order to guarantee that the Argyris isogeometric
space possesses  optimal approximation order. This is based on the
mentioned work \cite{CoSaTa16}, where the class of  AS-$G^1$ multi-patch 
parametrizations has been introduced.  The paper \cite{KaSaTa17b} has
then shown  numerically that this  class of  parametrizations enables the geometric design of 
complex planar multi-patch domains. The construction can be
extended to surfaces, and the $C^1$ isogeometric spaces
constructed on  AS-$G^1$ multi-patch parametrizations  exhibit optimal
convergence  under $h$-refinement.

Piecewise bilinear multi-patch parametrizations are a subclass of the
class of AS-$G^1$ multi-patch parametrizations and were  
considered to generate a $C^1$  
basis in~\cite{BeMa14,KaBuBeJu16,KaViJu15}. The focus therein is however
to characterize the full $C^1$ isogeometric space, which we denote
$\mathcal{V}^1$ in this paper. The papers \cite{KaBuBeJu16,KaViJu15} study $\mathcal{V}^1$ for
uniform spline functions of degree~$3$.  The work
\cite{BeMa14} focuses on  B\'{e}zier polynomials of degree~$4$ and
$5$ and generates  basis functions by means of minimal determining sets
(cf.~\cite{LaSch07}) for the involved B\'{e}zier coefficients. These
approaches can be extended to mapped  piecewise bilinear multi-patch parametrizations, which are also AS-$G^1$ and allow to model certain domains with curved boundaries and interfaces, see \cite{KaBuBeJu16,KaViJu15}. 
Still, more general AS-$G^1$ multi-patch parametrizations, such as domains with smooth boundaries, cannot be handled.
In \cite{KaSaTa17} an explicit  basis construction was given allowing  non-uniform isogeometric
spline functions of arbitrary degree $p \geq 3$ and 
regularity (with regularity $r$  up to  $p-2$), but on a two-patch geometry, with AS-$G^1$  parametrization.

Unlike the Argyris isogeometric space
$\A$, the dimension of the full $C^1$ space, that is  $\mathcal{V}^1$, depends on the domain
parametrization, see \cite{KaSaTa17}.
 In fact $\A$  possesses a simpler structure than  $\mathcal{V}^1$, but maintains its reproduction properties
  for traces and normal derivatives along the
  interfaces. In the present work we only provide  numerical evidence of
  the optimal approximation properties of $\A$, postponing the mathematical analysis to a
  further work.

In our setting, and in the papers
\cite{BeMa14,BlMoVi17,CoSaTa16,KaBuBeJu16,KaSaTa17,KaViJu15,mourrain2015geometrically},
the $C^1$ isogeometric functions are defined over a
domain given by a multi-patch parametrization which is not $C^1$ at the
patch interfaces. However there is another possibility, that is, when  the multi-patch
parametrization is  $C^1$  everywhere except in the vicinity 
of an extraordinary vertex, where the parametrization is singular.
$C^1$  isogeometric spaces in this case are constructed and
studied in  \cite{KaNgPe17,Peters2,NgKaPe15,ScSiEv13,ToSpHu17b,ToSpHu17,WuMoGaNk17}. A special case is
when the parametrization is polar at the extraordinary vertex, see
\cite{Ta2014,TaJu2012,ToSpHiHu16}. 

The remainder of the paper is organized as follows. Section~\ref{sec:preliminaries} presents some basic definitions and notations which are used throughout the paper. This includes 
the presentation of the spline spaces and of the multi-patch domain parametrizations as well as the local and global indexing for the patches, edges and vertices that we use. 
In Section~\ref{sec:c1-iga-spaces} we recall the concept of AS-$G^1$
multi-patch parametrizations and  the framework of $C^1$  isogeometric spline spaces over 
this class of multi-patch parametrizations. Section~\ref{sec:W}
describes the construction of a basis and of its associated dual basis
for the  Argyris isogeometric space $\A$. In Section \ref{sec:tests}
we perform  $L^2$-approximation over different AS-$G^1$
parametrizations to demonstrate the potential of the  Argyris
isogeometric space $\A$ 
for applications in IGA.  After the concluding
remarks in  Section~\ref{sec:conclusion}, we deliver technical proofs in \ref{appendix:proofs}, and describe  in \ref{appendix:a}
and \ref{sec:another_space} some extensions of our construction. 

\section{Preliminaries}
\label{sec:preliminaries}

We describe the general notation as well as the multi-patch framework, which will be considered and used throughout the paper.
First, in Sections \ref{sec:notation} and \ref{sec:parametrization} we introduce the general notation, the uni- and bivariate B-spline spaces and bases as well as the 
multi-patch domain we consider. Then, 
we recall in Section \ref{sec:global-local-indices} the standard global-to-local index mapping of mesh objects within our framework. 
Finally, we introduce in Section \ref{sec:local-parametrization} a specific local reparametrization, which will simplify the definitions of the smooth basis functions in Section \ref{sec:W}.

\subsection{Basic notation and spline spaces}
\label{sec:notation}

We consider an open domain $\Omega\subset \mathbb{R}^2$, connected and
regular, and   $\Gamma = \partial\Omega$ being its boundary.
 If $\omega\subset \Omega $ is a
manifold of dimension $0$ (a point) or $1$ (a line), we denote by 
$C^k(\omega) $ the set of piecewise smooth functions defined on
$\Omega$ for which the  $k$-order derivatives are continuous at each
point of $\omega$.

We denote by  $\Spru$ the spline space of
degree $p$ and continuity $C^r$ on the parameter domain $[0,1]$, 
constructed from  an open knot vector with $n$ non-empty knot-spans (i.e.,
elements), then having  mesh size $h=1/n$. We restrict here to uniform  knot spans
for simplicity,  see \cite{KaSaTa17} and \ref{appendix:a} for the generalization. 
The 
multiplicity of the interior knots is $p-r$. 
\begin{definition}[Univariate B-spline basis]
Given a integers  $p\geq 1 $, $r \leq p$, and $n
\geq 1$,  
we denote by $\{ b_j\}_{j \in  \{0,1,\ldots,N-1\}}$, with
$N=(p-r)(n-1)+p+1$,  the standard B-spline basis for the $C^r$
univariate $p$-degree polynomial space 
$\mathcal{S}^{p,r}_h$ on $[0,1]$ with  uniform mesh of mesh size
$h=1/n$.
\end{definition}

The tensor-product spline space 
on the parameter (reference) domain $[0,1]^2$ is   $\Spr = \Spru\otimes\Spru$, where $\f p=(p,p)$ and $\f r=(r,r)$ indicate 
double indices which we assume, for the sake of simplicity, to be
the same in the the two directions. 
\begin{remark}
For any $\f  r$, $ \mathcal{S}^{\f p, \f r}_{1}$ is the space of polynomials of degree $\f p$.
\end{remark}
\begin{definition}[Tensor-product B-spline basis]
We denote the standard tensor-product B-spline basis of the space
$\Spr$ by $\{b_{\f j}\}_{\f j \in  \mathbb{I}}$, with $b_{\f
  j}(\xi_1,\xi_2) = b_{j_1}(\xi_1)b_{j_2}(\xi_2)$, where $\f j =
(j_1,j_2)$ and 
$\mathbb{I} = \{0,\ldots,N-1\}\times\{0,\ldots,N-1\} $.
\end{definition}

\begin{assumption}[Minimum regularity within the patches]\label{ass:C1-in-patches}
  We assume $r\geq1$, that is, $\Spr  \subset
  C^1( [0,1]^2 )$. 
\end{assumption}

\subsection{Multi-patch domain parametrization}
\label{sec:parametrization}

We consider a multi-patch domain parametrization, composed of four-sided subdomains, called patches, 
interfaces between those patches as well as vertices, where several interfaces meet.
The index sets containing all patches, all edges and all vertices will be denoted 
by $\indexOmega$, $\indexSigma$ and $\indexX$, respectively. Moreover, we will have
$\indexSigma = \indexSigmaint \cupdot \indexSigmabound$, 
where $\indexSigmaint$ collects all indices representing the patch interfaces and the indices in 
$\indexSigmabound$ represent all boundary edges. Similarly, we will have $\indexX = \indexXint \cupdot \indexXbound$, 
where the indices in $\indexXint$ represent all interior vertices and 
the ones in $\indexXbound$ represent all boundary vertices. To avoid confusion, we will denote all index sets of patches, interfaces and vertices with a calligraphic~$\mathcal{I}$, and all index sets of basis functions with a double 
struck~$\mathbb{I}$.

\begin{assumption}[Multi-patch domain $\Omega$]\label{ass:Omega}
The domain $\Omega$ is the image of a regular  multi-patch spline
parametrization, i.e., 
\begin{equation}\label{eq:multi-patch-Omega} 
	\overline{\Omega} = \bigcup_{i\in\indexOmega} \overline{\Omega^{(i)}},
\end{equation}
where $ \left \{ \Omega^{(i)} \right \}_{i\in\indexOmega} $ is a 
regular and  disjoint partition,  without hanging nodes, and  each $\Omega^{(i)}$ is an open  spline patch,
\begin{equation}
	\f F^{(i)}: [0,1]^2 \rightarrow
        \overline{\Omega^{(i)}}\subset \RR^2 , \label{eq:F}
\end{equation}
where $\f F^{(i)}\in \Spr \times \Spr  $  are  non-singular and orientation-preserving, i.e., for all $i\in\indexOmega$ and
for all $ (\xi_1,\xi_2) \in [0,1]^2$, it holds
\begin{equation}\label{eq:non-singular-F}
\det  \left [\begin{array}{ll}
           \Du \f F^{(i)} (\xi_1,\xi_2) &   \Dv  \f F^{(i)}(\xi_1,\xi_2) \\
         \end{array}\right ]> 0.
\end{equation}
\end{assumption}

The domain $\Omega$ is partitioned into the union of patches, interfaces and interior vertices
\begin{displaymath}
   {\Omega} = \left(\bigcup_{i\in\indexOmega} \Omega^{(i)} \right)
    \cup \left(\bigcup_{i\in\indexSigmaint} \Sigma^{(i)} \right)
    \cup \left(\bigcup_{i\in\indexXint} \f x^{(i)}\right).
\end{displaymath}
The boundary $\Gamma$ of the domain is given by the collection of boundary edges 
and boundary vertices
\begin{displaymath}
   \Gamma = \left(\bigcup_{i\in\indexSigmabound} \Sigma^{(i)} \right)
    \cup \left(\bigcup_{i\in\indexXbound} \f x^{(i)}\right).
\end{displaymath}

\subsection{Global to local index conversion for edges and vertices}
\label{sec:global-local-indices}

Each edge and vertex of the multi-patch
partition corresponds to a \emph{global index} $i\in\indexSigma$ or $i\in\indexX$, respectively. 
In order to relate the edges and vertices to the patch parametrizations, each global index will 
be associated with a set of \emph{local indices}, depending on the patches that share the edge or vertex.

\begin{figure}[ht]
  \centering
  \begin{picture}(150,140)
    \put(0,0){\includegraphics[width=.3\textwidth]{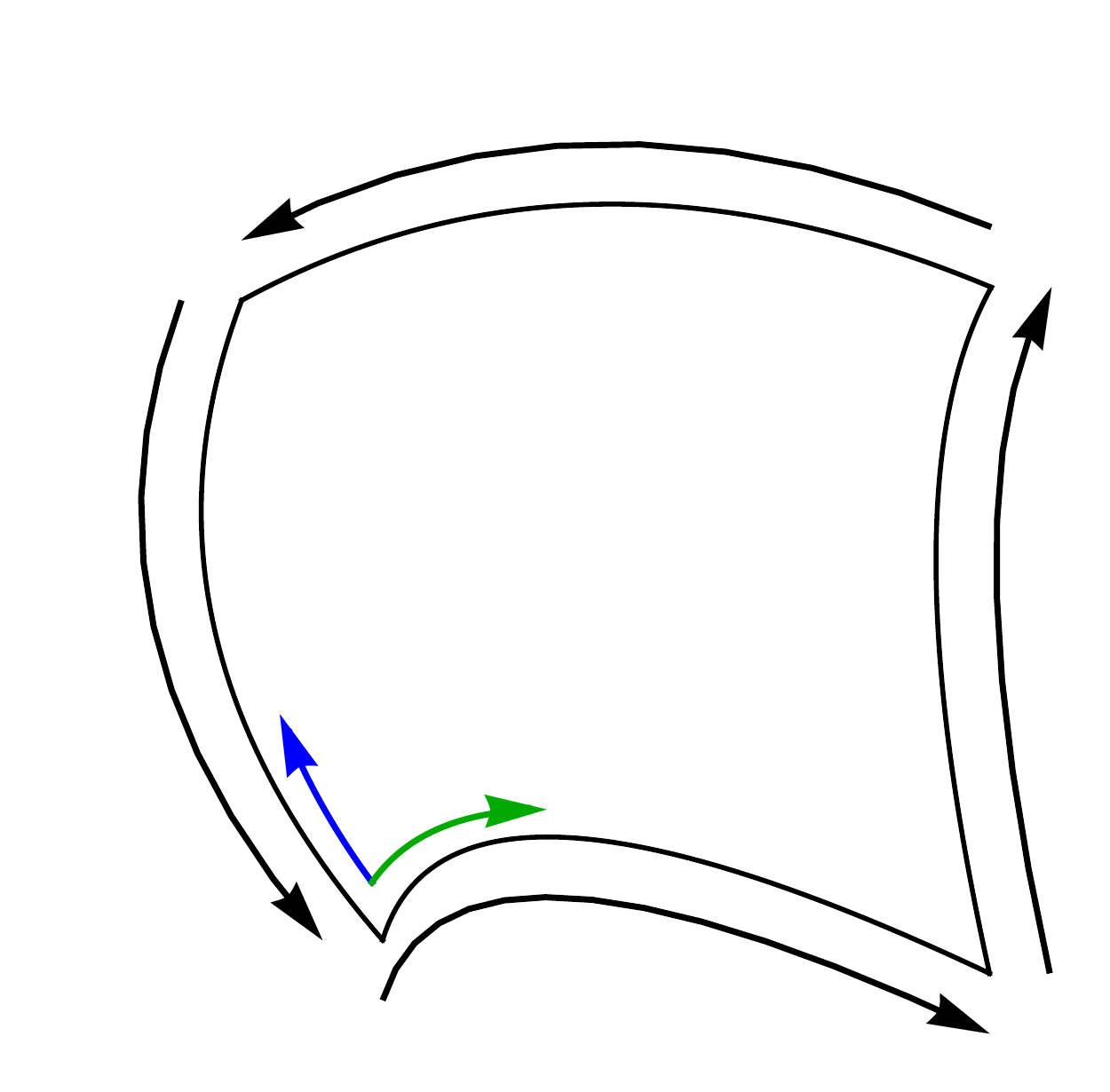}}
    \put(70,63){${\Omega}^{(\ii)}$}
    \put(20,4){${\f x}^{(\ii,0)}$}
    \put(130,4){${\f x}^{(\ii,1)}$}
    \put(130,110){${\f x}^{(\ii,2)}$}
    \put(10,110){${\f x}^{(\ii,3)}$}
    \put(-10,63){${\Sigma}^{(\ii,0)}$}
    \put(70,0){${\Sigma}^{(\ii,1)}$}
    \put(140,63){${\Sigma}^{(\ii,2)}$}
    \put(70,130){${\Sigma}^{(\ii,3)}$}
  \end{picture}
  \caption{Local indexing of edges and vertices for the patch
    ${\Omega}^{(\ii)}$. The parametrizations
    \eqref{eq:local-indexing-edges} induce a counterclockwise
    orientation of the edges. Here, the green and blue arrows represent 
the local coordinate system for the parameters $\xi_1$ and $\xi_2$, respectively.}
  \label{fig:edge-vertex-numbering}
\end{figure}
The local index, which we define in the following, 
is in fact a multi-index $(\ii,\kk)$, comprised of a patch index $\ii\in\indexOmega$ and a local numbering $\kk\in\{0,\ldots,3\}$.
\begin{remark}
We always denote with $\ii$ or $\ii_k$ dependent (secondary) indices, which depend on a (primary) index $i$, e.g. $\Omega^{(\ii_1)}$, $\Omega^{(\ii_2)}$ being the patches sharing an interface $\Sigma^{(i)}$.
\end{remark}
The four edges of each  patch $\Omega^{(\ii)}$ are indexed as follows:
\begin{equation}\label{eq:local-indexing-edges}
\begin{array}{ll}
  \Sigma^{(\ii,0)}=\{\f F^{(\ii)}(0,1-\xi):\xi\in]0,1[\},\quad
  &\Sigma^{(\ii,1)}=\{\f F^{(\ii)}(\xi,0): \xi\in]0,1[\}, \\
  \Sigma^{(\ii,2)}=\{\f F^{(\ii)}(1,\xi): \xi\in]0,1[\},
  &\Sigma^{(\ii,3)}=\{\f F^{(\ii)}(1-\xi,1): \xi\in]0,1[\};
\end{array}
\end{equation}
for its four vertices we set: 
\begin{equation*}
\begin{array}{ll}
  {\f x}^{(\ii,0)}=\{\f F^{(\ii)}(0,0)\},\quad
  &{\f x}^{(\ii,1)}=\{\f F^{(\ii)}(1,0)\}, \\
  {\f x}^{(\ii,2)}=\{\f F^{(\ii)}(1,1)\},
  &{\f x}^{(\ii,3)}=\{\f F^{(\ii)}(0,1)\};
\end{array}
\end{equation*}
see Figure \ref{fig:edge-vertex-numbering}. {Here and in what follows, the local coordinate system is depicted with green and blue arrows, corresponding to the $\xi_1$- and $\xi_2$-parameter directions, respectively.}

The  global to local
index conversion for the edges is defined  as follows. 
\begin{assumption}[Global to local index conversion for edges]\label{assu:global-local-edge}
For each global index  $i \in \indexSigmaint$ there exists a
   set  $\tupleSigma{i}=\{(\ii_1,\kk_1),(\ii_2,\kk_2)\}$, with
   $\ii_1,\ii_2\in\indexOmega$, $\ii_1 \neq \ii_2$,  and $\kk_1,\kk_2\in\{0,1,2,3\}$, and 
\begin{equation}
\Sigma^{(i)} = \Sigma^{(\ii_1,\kk_1)} = \Sigma^{(\ii_2,\kk_2)} \subset \Omega.
\label{eq:Sigma-interface}
\end{equation} 
 For each global index   $i \in \indexSigmabound$ we have $\tupleSigma{i}=\{(\ii_1,\kk_1)\}$, with $\ii_1\in\indexOmega$ and $\kk_1\in\{0,1,2,3\}$, and 
\begin{equation}
\Sigma^{(i)} = \Sigma^{(\ii_1,\kk_1)} \subset \Gamma.
\end{equation}  
\end{assumption}
Similarly, we can define the global to local index conversion for 
vertices. 
\begin{assumption}[Global to local index conversion for vertices]\label{assu:global-local-vertex}
For each $i \in \indexX$ there exists a set  $\tupleX{i}=\{(\ii_2,\kk_2),\ldots,(\ii_{2\nu},\kk_{2\nu})\}$, with 
$\ii_2,\ldots,\ii_{2\nu}\in\indexOmega$, being $\nu$ different patch indices, 
and $\kk_2,\ldots,\kk_{2\nu}\in\{0,1,2,3\}$, where 
\begin{equation}
	{\f x}^{(i)} = {\f x}^{(\ii_2,\kk_2)} = \ldots = {\f x}^{(\ii_{2\nu},\kk_{2\nu})}.
\end{equation} 
Here, $\nu$ is the \emph{patch valence} of the vertex  ${\f x}^{(i)}$. 
\end{assumption}
Note that we only consider even sub-indices for all patches. This is because later we introduce odd sub-indices for the adjacent interfaces (see Figure~\ref{fig:reparam} and Definition \ref{def:vertex-standard-form}).
The patch valence 
 coincides with the \emph{edge valence} for interior vertices (this is
 the classical notion of valence).   For boundary vertices the  edge valence is larger by one 
than the patch valence. 

Note that the inverse mapping from local to global indices is unique in the following sense: 
Each local edge index  is associated to a unique global index of  an interface or a boundary edge, i.e., 
for each $\ii_1\in\indexOmega$ and for each $\kk_1\in\{0,1,2,3\}$ there exists exactly one $i\in\indexSigma$, such that $(\ii_1,\kk_1) \in \tupleSigma{i}$. 
Moreover, for each $\ii_2\in\indexOmega$ and for each $\kk_2\in\{0,1,2,3\}$ there exists exactly one $i\in\indexX$, such that $(\ii_2,\kk_2) \in \tupleX{i}$.

\subsection{Parametrization in standard form for edges and vertices}
\label{sec:local-parametrization}

In order to simplify the construction of smooth basis functions across interfaces and vertices, 
we assume that the patch parametrizations are given in \emph{standard form}, as depicted in Figure \ref{fig:reparam}. 
This is obviously not always the case, but can be achieved by reparametrization, as we will 
demonstrate in Lemma \ref{lem:reparam-standard-form}.

\begin{figure}[ht]
  \centering
  \begin{picture}(250,125)
    \put(0,0){\includegraphics[width=.45\textwidth]{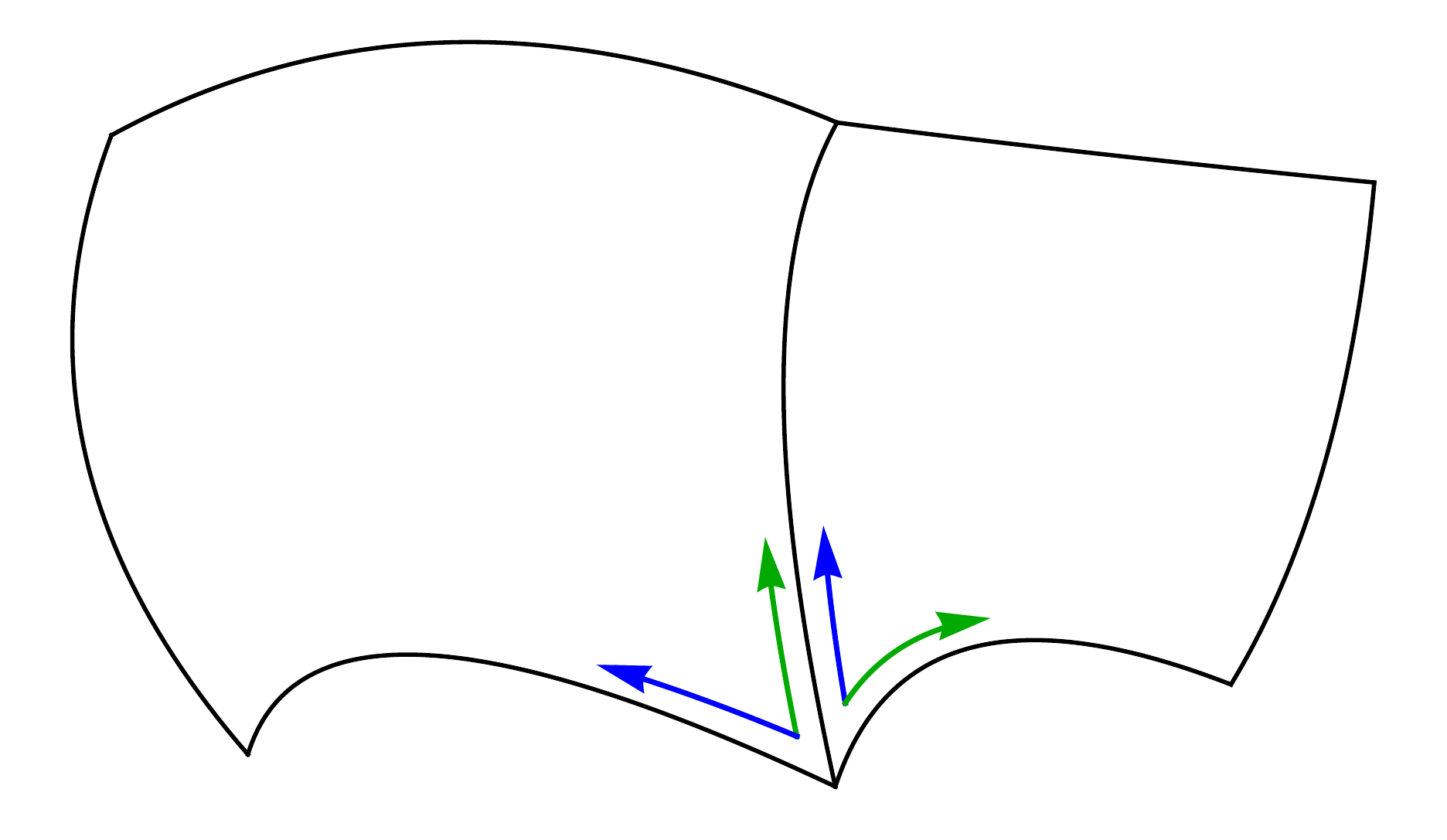}}
    \put(50,55){${\Omega}^{(\ii_2)}$}
    \put(140,55){${\Omega}^{(\ii_1)}$}
    \put(95,80){$\Sigma^{(i)}$}
  \end{picture}
  \hspace{10pt}
  \begin{picture}(160,160)
    \put(0,160){\includegraphics[width=.35\textwidth,angle=270]{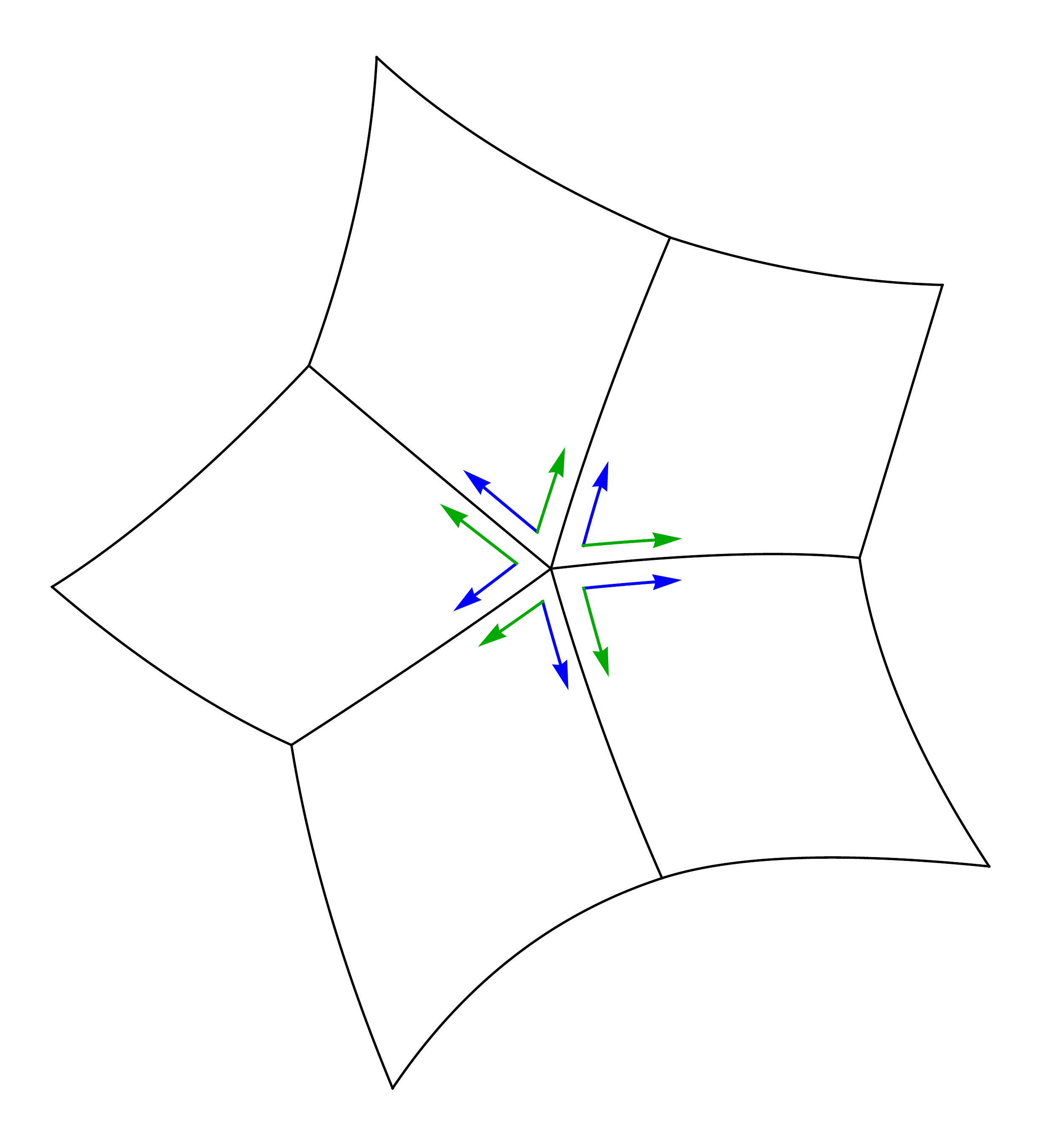}}
    \put(110,40){$\ldots$}
    \put(70,50){$\f x^{(i)}$}
    \put(130,75){${\Omega}^{(\ii_2)}$}
    \put(118,94){$\Sigma^{(\ii_3)}$}
    \put(85,115){${\Omega}^{(\ii_4)}$}
    \put(50,85){$\ldots$}
  \end{picture}
  \caption{Parametrization in standard form for a given
    interface $\Sigma^{(i)}$ (left) and vertex $ \f x^{(i)} $ (right). 
    The local coordinates are depicted in a similar fashion as in Figure~\ref{fig:edge-vertex-numbering}.}
  \label{fig:reparam}
  \label{fig:reparamVertex}
\end{figure}

For an interface in standard form, we assume that the two neighboring patches meet in a certain way, as given in the following definition.
\begin{definition}
Given an interface $\Sigma^{(i)}$, for $i \in
\indexSigmaint$, and let $\ii_1$, $\ii_2$ be the corresponding patch indices as in Assumption~\ref{assu:global-local-edge}. We have given a \emph{parametrization in standard
form} for the interface $\Sigma^{(i)}$, if 
\begin{equation}\label{eq:patch-continuity} 
  \f F^{(\ii_{1})} (0,\xi) = \f F^{(\ii_{2})}(\xi,0),\, \text{ for all } \xi\in[0,1].
\end{equation}
This corresponds to a configuration as depicted in Figure~\ref{fig:reparam}~(left). 
Similarly, for a boundary edge $\Sigma^{(i)}$, with $i \in
\indexSigmabound$, we say that a parametrization in standard form is given if $\Sigma^{(i)} = \{\f F^{(\ii_{1})} (0,\xi): \xi\in\left]0,1\right[\}$.
\end{definition}
For a vertex in standard form, we assume that the vertex is enclosed by edges and patches 
$$
  \Sigma^{(\ii_{1})},\;\Omega^{(\ii_{2})},\;\Sigma^{(\ii_{3})},\;\Omega^{(\ii_{4})},\;\ldots,\;\Sigma^{(\ii_{2\nu-1})},\;\Omega^{(\ii_{2\nu})},\;\Sigma^{(\ii_{2\nu+1})}
$$
in counterclockwise order and that the patches are parametrized in such a way 
that the vertex is at the origin, as depicted in Figure~\ref{fig:reparam}~(right). This is detailed in the following definition.
\begin{definition}\label{def:vertex-standard-form}
Given a vertex $\f x^{(i)}$, for $i \in
\indexX$, and let $\ii_2$, \ldots, $\ii_{2\nu}$ be the corresponding patch indices as in Assumption~\ref{assu:global-local-vertex}. 
We have given a \emph{parametrization in standard form} for the vertex $\f x^{(i)}$, if 
\begin{equation}
  \f F^{(\ii_{2\ell})}(0,\xi) = \f F^{(\ii_{2\ell+2})}(\xi,0),\, \text{ for all } \xi \in[0,1],
\label{eq:vertex-reparam}
\end{equation}
where $\ell=1,\ldots,\nu$ for interior vertices while  $\ell=1,\ldots,\nu-1$ for boundary vertices. 
We have  $\overline{\Sigma^{(\ii_{2\ell+1})}} = \overline{\Omega^{(\ii_{2\ell})}} \cap \overline{\Omega^{(\ii_{2\ell+2})}}$, 
with $\ell=0,\ldots,\nu$ for interior vertices, and with $\ell=1,\ldots,\nu-1$ for boundary vertices. For interior vertices we have $\ii_{1} = \ii_{2\nu+1}$, and
for boundary vertices we set instead $\overline{\Sigma^{(\ii_{1})}} = \Gamma \cap \overline{\Omega^{(\ii_{2})}}$
as well as $\overline{\Sigma^{(\ii_{2\nu+1})}} = \overline{\Omega^{(\ii_{2\nu})}} \cap \Gamma$. 
\end{definition}
In the  case  of Definition \ref{def:vertex-standard-form}, all
interfaces are in standard form. The even/odd indexing for
patches/edges will come handy in Section \ref{sec:W}.

When considering a single edge
$\Sigma^{(i)}$ or a single vertex $ \f x^{(i)} $, it is not
restrictive to assume that the given parametrizations are in standard
form, as stated in the following lemma. 
\begin{lemma}[Reparametrizations that yield a standard form]\label{lem:reparam-standard-form}
Let $\f r : [0,1]^2 \rightarrow [0,1]^2$ with $\f
r(\xi_1,\xi_2)=(1-\xi_2,\xi_1)$. 
Given an interface $\Sigma^{(i)}$, for $i \in
\indexSigmaint$, and $\tupleSigma{i}=\{(\ii_1,\kk_1),(\ii_2,\kk_2)\}$, then 
$\f F^{(\ii_{1})}\circ{\f r}^{\kk_1} $  and $\f
F^{(\ii_{2})}\circ{\f r}^{\kk_2-1}$  are  suitable
reparametrizations of  the adjacent  patches $\Omega^{(\ii_{1})}$ and
$\Omega^{(\ii_{2})}$ that yield a parametrization in standard
form for the interface $\Sigma^{(i)}$. Similarly, $\f F^{(\ii_1)}\circ\f r^{\kk_1}$ yields a parametrization in standard form for a boundary edge $\Sigma^{(i)}$ with $i \in\indexSigmabound$. 

Given a vertex $ \f x^{(i)} $, $i \in \indexX$, and $\tupleSigma{i}=\{(\ii_2,\kk_2),\ldots,(\ii_{2\nu},\kk_{2\nu})\}$. Assuming that the patches are arranged in counterclockwise order, then 
$\f F^{(\ii_{2\ell})}\circ{\f r}^{\kk_{2\ell}}$, for $\ell=1,\ldots,\nu$, are  suitable
reparametrizations of  the adjacent  patches $\Omega^{(\ii_{2\ell})}$ that yield a parametrization in standard
form for the vertex $ \f x^{(i)} $.
\end{lemma}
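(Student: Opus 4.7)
My plan exploits the fact that $\f r:(\xi_1,\xi_2)\mapsto(1-\xi_2,\xi_1)$ is a rotation by $\pi/2$ of the reference square $[0,1]^2$, and hence has order four, cyclically permuting both the four corners $(0,0),(1,0),(1,1),(0,1)$ and the four parameter edges. A direct computation of $\f r^k(0,\xi)$ and $\f r^k(\xi,0)$ for $k\in\{0,1,2,3\}$ yields the following elementary dictionary: for any patch $\f F$, the reparametrization $\f F\circ\f r^k$ has $\{\xi_1=0\}$ tracing (as a parametrized curve in the sense of \eqref{eq:local-indexing-edges}) the old local edge of index $k$, has $\{\xi_2=0\}$ tracing the old local edge of index $(k+1)\bmod 4$, and has its new corner $(0,0)$ coinciding with the old corner of local index $k$.

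With this dictionary in hand, the interface case follows by inspection: the exponent $\kk_1$ places $\Sigma^{(\ii_1,\kk_1)}=\Sigma^{(i)}$ onto the new edge $\{\xi_1=0\}$ of $\f F^{(\ii_1)}\circ\f r^{\kk_1}$, and the exponent $\kk_2-1$ places $\Sigma^{(\ii_2,\kk_2)}=\Sigma^{(i)}$ onto the new edge $\{\xi_2=0\}$ of $\f F^{(\ii_2)}\circ\f r^{\kk_2-1}$. What still has to be checked is the pointwise parametric identity \eqref{eq:patch-continuity}, not merely coincidence of the two edges as sets. This is the main subtlety of the argument: I would invoke the implicit $C^0$ compatibility of the two patch parametrizations along $\Sigma^{(i)}$ built into the partition property of Assumption \ref{ass:Omega}, together with the observation that because the patches lie on opposite sides of $\Sigma^{(i)}$ and are both orientation-preserving, their counterclockwise boundary traversals cross $\Sigma^{(i)}$ in opposite directions; the specific exponents $\kk_1$ and $\kk_2-1$ are chosen precisely to re-align them so that the standard-form parametrization $(\f F^{(\ii_1)}\circ\f r^{\kk_1})(0,\xi)$ and $(\f F^{(\ii_2)}\circ\f r^{\kk_2-1})(\xi,0)$ agree pointwise. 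The boundary-edge case is a direct specialization in which only a single patch is present and no pairwise compatibility need be verified.

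For the vertex case, I would reduce to the interface result just established. By the above dictionary, $\f r^{\kk_{2\ell}}$ applied to $\f F^{(\ii_{2\ell})}$ sends the corner of local index $\kk_{2\ell}$, which by Assumption \ref{assu:global-local-vertex} is the vertex $\f x^{(i)}$, to the parametric origin, so the common-corner requirement of Definition \ref{def:vertex-standard-form} is met for every $\ell$. For the edge-gluing identities \eqref{eq:vertex-reparam}, note that the two local edges incident to corner $\kk_{2\ell}$ have local indices $\kk_{2\ell}$ and $(\kk_{2\ell}+1)\bmod 4$; the counterclockwise ordering of the patches around $\f x^{(i)}$ then identifies the common interface $\Sigma^{(\ii_{2\ell+1})}$ as having local index $\kk_{2\ell}$ in $\f F^{(\ii_{2\ell})}$ and $(\kk_{2\ell+2}+1)\bmod 4$ in $\f F^{(\ii_{2\ell+2})}$. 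Substituting these values into the interface-case formula, the corresponding exponents are $\kk_{2\ell}$ and $(\kk_{2\ell+2}+1)-1\equiv\kk_{2\ell+2}\bmod 4$, exactly as in the lemma statement, so \eqref{eq:patch-continuity} specializes to \eqref{eq:vertex-reparam}. Boundary vertices are treated analogously, with the boundary-edge claim of the lemma covering the two outermost edges $\Sigma^{(\ii_1)}$ and $\Sigma^{(\ii_{2\nu+1})}$. The main obstacle throughout is the bookkeeping around edge orientations and the sixteen combinations of local edge indices; once the cyclic action of $\f r$ on corners and edges has been made explicit, every remaining verification is routine.
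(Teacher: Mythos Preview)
The paper does not give a proof of this lemma at all: it simply states that ``a proof of Lemma~\ref{lem:reparam-standard-form} is straightforward and will be omitted here,'' and then remarks that the two possible orderings of $\ii_1,\ii_2$ give the two possible standard forms. Your proposal is therefore not a different route but rather a correct and careful fleshing-out of the omitted argument. The dictionary you compute for the action of $\f r^k$ on corners and parameter edges is accurate, and your reduction of the vertex case to the interface case via the local edge indices is exactly the intended mechanism.

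The one point where you go beyond what the paper indicates is your orientation argument for the pointwise identity~\eqref{eq:patch-continuity} (as opposed to mere coincidence of the two edges as sets). This is a genuine subtlety that the paper does not comment on, and your handling of it is correct: since both $\f F^{(\ii_1)}$ and $\f F^{(\ii_2)}$ are orientation-preserving and the patches lie on opposite sides of $\Sigma^{(i)}$, their counterclockwise boundary parametrizations traverse $\Sigma^{(i)}$ in opposite directions; the exponents $\kk_1$ and $\kk_2-1$ place the interface on $\{\xi_1=0\}$ (traversed against the CCW boundary of $\Omega^{(\ii_1)}$) and on $\{\xi_2=0\}$ (traversed with the CCW boundary of $\Omega^{(\ii_2)}$), so the two traces agree pointwise once one assumes conforming $C^0$ matching of the patch parametrizations along the interface. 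That conformity is indeed implicit in the paper's setup (it underlies the definition of $\mathcal{V}^0$ and is made explicit in the matching-mesh discussion of Appendix~\ref{appendix:a}).
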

Here, $\f r$ corresponds to the  $\pi/2$  counterclockwise  rotation map in the
parameter domain.  We denote by $\f F^{(\ii)} \circ {\f r}^\kk$ the reparametrization of the patch $\Omega^{(\ii)}$ (with parametrization $\f F^{(\ii)}$) by 
a rotation with an angle of $\kk\pi/2$, see Figure \ref{fig:rot}.
\begin{figure}[ht]
  \centering  \begin{picture}(250,110)
    \put(0,0){\includegraphics[width=.55\textwidth]{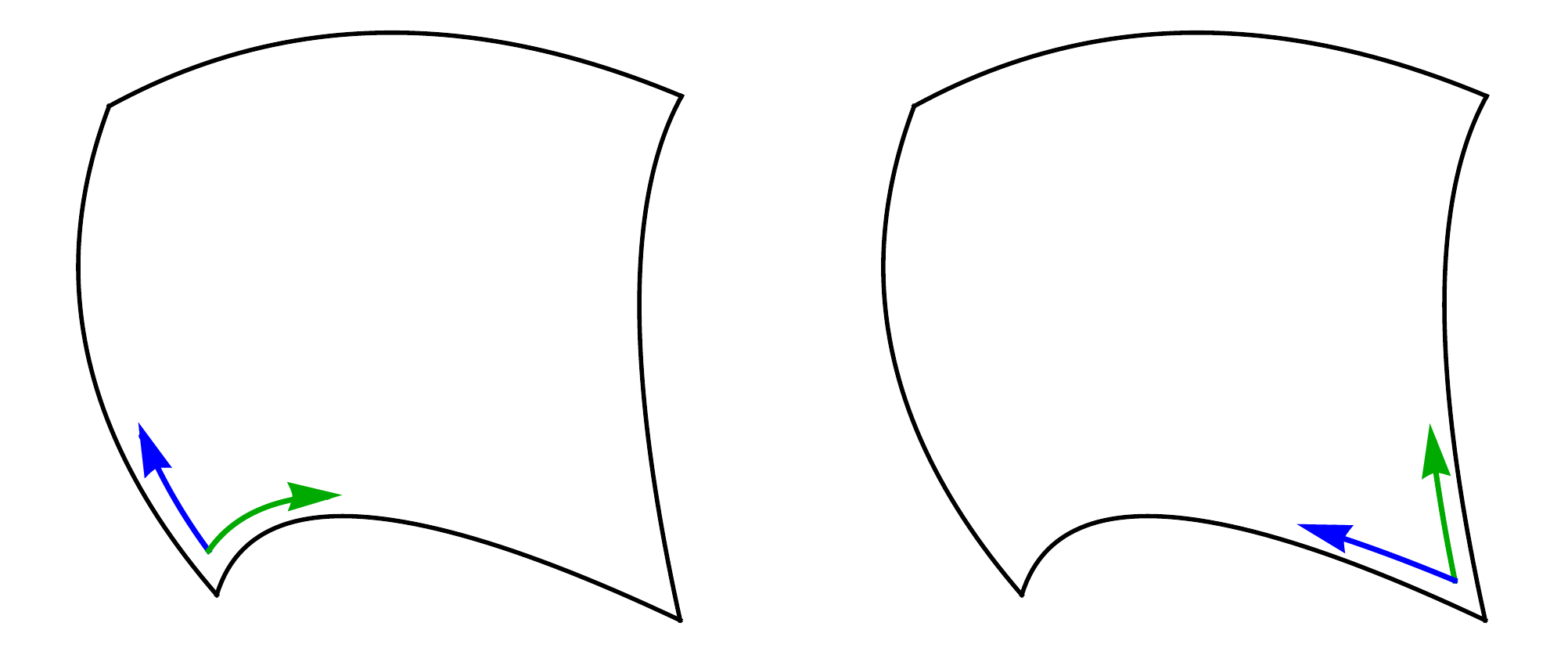}}
    \put(40,55){$\f F^{(\ii)}([0,1]^2)$}
    \put(150,55){$\left(\f F^{(\ii)}\circ {\f r}\right)([0,1]^2)$}
  \end{picture}
  \caption{Visualization of a reparametrization for a given patch $\Omega^{(\ii)}$. 
  The local coordinates are depicted in a similar fashion as in Figure~\ref{fig:edge-vertex-numbering}.}
  \label{fig:rot}
\end{figure}

A proof of Lemma \ref{lem:reparam-standard-form} is straightforward and will be omitted here. 
Actually, for each interface there exist two parametrizations in
standard form,  depending on the order of the indices $\ii_1$ and $\ii_2$, 
(changing the order of the patches simply changes the orientation of the 
interface with respect to the two patches). Similarly, for an interior vertex the choice of the indices 
is not unique; moreover, in case of an interior vertex, all sub-indices are considered to be
modulo $(2\nu)$, e.g., $ \ii_0 = \ii_{2\nu}$.

\section{$C^1$ isogeometric spaces}
\label{sec:c1-iga-spaces}

In this section we define  $C^0$ and
then  $C^1$  multi-patch isogeometric spaces, discuss the  relation to geometric
continuity $G^1$ of the graph parametrization, and the notion of AS-$G^1$ continuity of the
multi-patch parametrization.

\subsection{Isogeometric spaces}
\label{sec:iga-spaces}

\begin{definition}[Isogeometric spaces]\label{defi:V0-V1}
We define the $C^0$ isogeometric space as 
\begin{equation} \label{eq:V0}
 \mathcal{V}^{0}=\left \{ \igf \in  C^0( \overline\Omega ) \, | \, \text{ for all } i \in
   \indexOmega, \, f_h^{(i)} = \igf \circ \f
   F^{(i)} \in \Spr \right\},
  \end{equation}
and the $C^1$ isogeometric space as 
 \begin{equation} \label{eq:V1}
 \mathcal{V}^{1}=\mathcal{V}^{0}\cap C^1( \overline\Omega ).
  \end{equation}
\end{definition}

\subsection{$C^1$ regularity of isogeometric functions and  $G^1$  graph  regularity}
\label{sec:C1-G1-conditions}

The graph  $\graph \subset \Omega \times \RR$ of an isogeometric function  $\igf:\Omega \rightarrow
\mathbb{R}$ is naturally split into patches $\graph^{(i)}$ having the parametrizations 
\begin{equation}\label{eq:isogeom-function-parametrization}
  \left [
    \begin{array}{c}
      \f F^{(i)}\\ f_h^{(i)} 
    \end{array}
\right ]\, : \,  [0,1]^2 \rightarrow \graph^{(i)},
\end{equation}
where $f_h^{(i)} =\igf \circ \f F^{(i)} $. 

The  $C^1$ continuity  at an interface $\Sigma^{(i)}$ is, by
definition, the $G^1$ (geometric) continuity of its graph parametrization, as
stated in the next proposition.
\begin{proposition}\label{teo:C1-G1-graph-parametrization} Under
  Assumption \ref{ass:C1-in-patches},  an isogeometric function $\igf \in \mathcal{V}^{0}$ belongs to 
$\mathcal{V}^1$ if and only if the parametrization 
\eqref{eq:isogeom-function-parametrization} of its graph is $G^1$ continuous  
at the  interfaces $ \overline \Sigma^{(i)} $, for all $i \in \indexSigmaint$.
\end{proposition}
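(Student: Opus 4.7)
The plan is to reduce $C^1$ regularity of $\igf$ across an interface to the matching of tangent planes of the graph on both sides, which is exactly what $G^1$ continuity of the graph parametrization requires.

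First, I would use Assumption \ref{ass:C1-in-patches}: since $r\geq 1$, each $f_h^{(i)} \in \Spr \subset C^1([0,1]^2)$, and $\f F^{(i)}$ is a $C^1$-diffeomorphism of $[0,1]^2$ onto $\overline{\Omega^{(i)}}$ by \eqref{eq:non-singular-F}. Hence $\igf$ is automatically $C^1$ on each closed patch $\overline{\Omega^{(i)}}$. Because $\igf \in \mathcal{V}^0 \subset C^0(\overline\Omega)$, the remaining obstruction to $\igf \in \mathcal{V}^1$ is the continuity of $\nabla\igf$ across each interface $\overline\Sigma^{(i)}$, $i\in\indexSigmaint$.

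Second, I would compute the tangent plane of the graph $\graph^{(\ii_k)}$ at a point $\f p \in \overline{\Sigma^{(i)}}$ from the patch parametrization \eqref{eq:isogeom-function-parametrization}. The Jacobian of $(\f F^{(\ii_k)}, f_h^{(\ii_k)})^\top$ is the $3\times 2$ matrix whose first two rows form the Jacobian of $\f F^{(\ii_k)}$, with positive determinant by \eqref{eq:non-singular-F}, and whose third row is $\nabla f_h^{(\ii_k)}$. A direct linear-algebra check shows that the image (tangent plane) may be written as the graph
\begin{equation*}
  T^{(\ii_k)}_{\f p} = \bigl\{\,(v_1,v_2,\nabla\igf_{|\overline{\Omega^{(\ii_k)}}}(\f p)\cdot(v_1,v_2))\, :\, (v_1,v_2)\in\RR^2\,\bigr\},
\end{equation*}
where $\nabla\igf_{|\overline{\Omega^{(\ii_k)}}}$ denotes the one-sided gradient of $\igf$ restricted to patch $\ii_k$ (well-defined up to the boundary thanks to Step~1). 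The key observation is that the non-singularity of $\f F^{(\ii_k)}$ ensures that the tangent plane is never vertical and is uniquely characterized by this one-sided gradient.

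Third, $G^1$ continuity of the graph parametrization at $\overline{\Sigma^{(i)}}$ is, by definition, the coincidence of the tangent planes $T^{(\ii_1)}_{\f p} = T^{(\ii_2)}_{\f p}$ for every $\f p \in \overline{\Sigma^{(i)}}$. By the representation above, this is equivalent to
\begin{equation*}
  \nabla\igf_{|\overline{\Omega^{(\ii_1)}}}(\f p) = \nabla\igf_{|\overline{\Omega^{(\ii_2)}}}(\f p)\quad\text{for all }\f p\in\overline{\Sigma^{(i)}},
\end{equation*}
which, combined with $\igf \in C^0(\overline\Omega)$ and the patchwise $C^1$ regularity of Step~1, is precisely $\igf \in C^1$ in a neighborhood of $\overline\Sigma^{(i)}$. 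Running this equivalence for every interior interface establishes both directions of the proposition; matching at the vertices contained in $\overline\Sigma^{(i)}$ follows by continuity from the matching on the relative interior.

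The main obstacle is the second step: checking carefully that the image of the $3\times 2$ Jacobian matrix of the graph parametrization is indeed exactly the graph of the one-sided planar gradient of $\igf$. This is pure linear algebra, but it is the precise place where Assumption \ref{ass:C1-in-patches} and the non-singularity \eqref{eq:non-singular-F} are simultaneously used, and where the chain rule converts the parametric derivatives $\partial_1 f_h^{(\ii_k)}$, $\partial_2 f_h^{(\ii_k)}$ into the physical gradient $\nabla\igf$.
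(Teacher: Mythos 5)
Your proposal cannot be matched against an in-paper proof, because the paper does not prove Proposition \ref{teo:C1-G1-graph-parametrization}: it states the equivalence and defers its discussion to the cited references (\cite{CoSaTa16,Pe15,KaViJu15}). As a self-contained argument, your route via tangent planes is sound and natural: by Assumption \ref{ass:C1-in-patches} and \eqref{eq:non-singular-F} each $\igf\circ\f F^{(i)}$ is $C^1$ and the Jacobian of $\f F^{(i)}$ is invertible up to the boundary, so the chain rule identifies the tangent plane of each graph patch with the graph of the one-sided physical gradient; coincidence of the two non-vertical planes is then exactly equality of the one-sided gradients, and together with $\igf\in C^0(\overline\Omega)$ and patchwise $C^1$ regularity this is the standard gluing criterion for $C^1$ across $\overline\Sigma^{(i)}$ (with the vertices recovered by continuity of the patchwise gradients). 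This is essentially the geometric content that the paper takes for granted and then exploits algebraically in Proposition \ref{prop:characterization-of-C1}.

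One point should be made explicit, since Definition \ref{def:G1} is not literally ``the tangent planes coincide'': it demands a relation \eqref{eq:G1-for-Fg-alpha-beta-gamma} \emph{together with} the sign condition \eqref{eq:alpha-sign-condition}. For the direction ``$G^1$ $\Rightarrow$ planes coincide'' you implicitly use $\alpha^{(i,\ii_1)},\alpha^{(i,\ii_2)}\neq 0$ (which the sign condition provides) to solve for the transversal tangent vector of one patch in terms of the tangent vectors of the other. For the converse, coincidence of the planes only hands you \emph{some} nontrivial coefficient vector, and you never check \eqref{eq:alpha-sign-condition}. It does hold automatically: since $\Du\f F^{(\ii_1)}(0,\xi)$ and $\Dv\f F^{(\ii_1)}(0,\xi)$ are linearly independent, the planar part of \eqref{eq:G1-for-Fg-alpha-beta-gamma} determines $[\alpha^{(i,\ii_1)},\alpha^{(i,\ii_2)},\beta^{(i)}]$ up to a common factor as in \eqref{eq:gluing-data-explicit-formula}, and the two Jacobian determinants appearing there are strictly positive by \eqref{eq:non-singular-F}, so $\alpha^{(i,\ii_1)}\alpha^{(i,\ii_2)}>0$ for any nontrivial choice. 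Adding this one line (it is precisely the computation in the proof of Proposition \ref{prop:characterization-of-C1}, resting on the orientation-preserving regularity in Assumption \ref{ass:Omega}) closes the only gap; the remaining gluing step, that matching one-sided gradients along a closed interface yield differentiability there, is standard and fine to invoke.
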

For a discussion and generalizations  of the equivalence above see
\cite{CoSaTa16,Pe15,KaViJu15}. The definition of $G^1$ continuity, in
 our context, is detailed below.
\begin{definition}[$G^1$ continuity at  $\Sigma^{(i)}$]\label{def:G1} 
Consider an interface $\Sigma^{(i)}$, with $i\in\indexSigmaint$.
Assume $\tupleSigma{i}=\{(\ii_1,0),(\ii_2,1)\}$, that is, the adjacent
patches have  parametrizations $\f F^{(\ii_1)}$ and  $\f F^{(\ii_2)}$ in standard
 form for $\Sigma^{(i)}$  (see Section
 \ref{sec:local-parametrization}). 
 The graph parametrization
 \eqref{eq:isogeom-function-parametrization} is said to be 
$G^1$  at $\overline\Sigma^{(i)} $ if there exist functions $\alpha^{(i,\ii_1)}:
\left[0,1\right]   \rightarrow \RR $, $ \alpha^{(i,\ii_2)} : \left[0,1\right] \rightarrow \RR $ and $\beta^{(i)}  : \left[0,1\right]
 \rightarrow \RR $ such that for all $  \xi \in \left[0,1\right]$,
 \begin{equation}
   \label{eq:alpha-sign-condition}
 \alpha^{(i,\ii_1)}  (\xi) \alpha^{(i,\ii_2)}  (\xi) > 0
 \end{equation}
and 
 \begin{equation}
   \label{eq:G1-for-Fg-alpha-beta-gamma}
  	\alpha^{(i,\ii_1)} (\xi)  \left [
    \begin{array}{c}
     \Dv \f F^{(\ii_2)}(\xi,0) \\ \Dv f_h^{(\ii_2)} (\xi,0) 
    \end{array}
\right ]
+
   \alpha^{(i,\ii_2)}(\xi) 
\left [
    \begin{array}{c}
       \Du  \f F^{(\ii_1)}(0,\xi) \\    \Du f_h^{(\ii_1)} (0,\xi) 
    \end{array}
\right ]
+ \beta^{(i)} (\xi)
\left [
    \begin{array}{c}
      \Dv  \f F^{(\ii_1)}(0,\xi) \\ \Dv  f_h^{(\ii_1)}(0,\xi)
    \end{array}
\right ]
=\boldsymbol{0}.
 \end{equation}
\end{definition}
In the framework of Definition \ref{def:G1}, it is useful to introduce functions 
$\beta^{(i,\ii_1)}$ and $\beta^{(i,\ii_2)}$ such that
\begin{equation}
  \label{eq:beta_function_of_alpha}
  \beta^{(i)} (\xi)= \alpha^{(i,\ii_1)} (\xi) \beta^{(i,\ii_2)}(\xi)+ \alpha^{(i,\ii_2)}(\xi)\beta^{(i,\ii_1)}(\xi).
\end{equation}
We call the functions $\alpha^{(i,\ii_1)}$, $\alpha^{(i,\ii_2)}$, 
$\beta^{(i,\ii_1)}$ and $\beta^{(i,\ii_2)}$ the \emph{gluing data} 
 for the interface $\Sigma^{(i)}$. 
 
In the context of IGA, the multi-patch domain parametrizations $ \f F^{(i)}$ are considered given (at
least, at each linearization step)  and determine the gluing data
above, while $f_h^{(i)} $ are  the
unknowns which need to fulfill the gluing condition
(third equation of \eqref{eq:G1-for-Fg-alpha-beta-gamma}) in order to represent $C^1$
isogeometric functions. This is stated in the next result which combines
Proposition~\ref{teo:C1-G1-graph-parametrization} and Definition~\ref{def:G1}.
\begin{proposition}\label{prop:characterization-of-C1}
For each $i\in\indexSigmaint$, assuming
$\tupleSigma{i}=\{(\ii_1,0),(\ii_2,1)\}$ (i.e., $\f F^{(\ii_1)}$ and  $\f
F^{(\ii_2)}$ in standard
 form for $\Sigma^{(i)}$), let  $\alpha^{(i,\ii_1)} $, $
 \alpha^{(i,\ii_2)} $ and $\beta^{(i)}$  such that for all $  \xi \in
 \left[0,1\right]$ it holds $\alpha^{(i,\ii_1)}  (\xi) \alpha^{(i,\ii_2)}
 (\xi) > 0$ and 
 \begin{equation}
   \label{eq:G1-for-F-alone-alpha-beta-gamma}
 	\alpha^{(i,\ii_1)} (\xi)  \Dv \f F^{(\ii_2)}(\xi,0)  +
        \alpha^{(i,\ii_2)}(\xi) \Du  \f F^{(\ii_1)}(0,\xi) + \beta^{(i)} (\xi)
        \Dv  \f F^{(\ii_1)}(0,\xi)  =\boldsymbol{0}.
 \end{equation} 
Let  $\igf \in \mathcal{V}^{0}$. Then $\igf \in \mathcal{V}^{1}$  if
and only if  for all $i\in\indexSigmaint$ and $  \xi \in
 \left[0,1\right]$, it holds
\begin{equation}\label{eq:G1-condition-for-g}
  	\alpha^{(i,\ii_1)} (\xi)  \Dv f_h^{(\ii_2)}(\xi,0)  +
        \alpha^{(i,\ii_2)}(\xi) \Du  f_h^{(\ii_1)}(0,\xi) + \beta^{(i)} (\xi)
        \Dv  f_h^{(\ii_1)}(0,\xi)  =0,
\end{equation}
where $f_h^{(i)} =\igf \circ \f F^{(i)} $.
\end{proposition}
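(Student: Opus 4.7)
The plan is to combine the two preceding results, Proposition \ref{teo:C1-G1-graph-parametrization} and Definition \ref{def:G1}, and exploit the fact that the vector equation \eqref{eq:G1-for-Fg-alpha-beta-gamma} splits naturally into its first two scalar components, involving only the domain parametrizations $\f F^{(\ii_1)}$ and $\f F^{(\ii_2)}$, and its third scalar component, involving $f_h^{(\ii_1)}$ and $f_h^{(\ii_2)}$. The proposition fixes a specific choice of gluing data $(\alpha^{(i,\ii_1)},\alpha^{(i,\ii_2)},\beta^{(i)})$ satisfying the geometric part \eqref{eq:G1-for-F-alone-alpha-beta-gamma} and the sign condition, and asks whether $\igf \in \mathcal{V}^1$ is equivalent to \eqref{eq:G1-condition-for-g} holding with exactly \emph{this} data.

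The ``if'' direction is essentially direct: assuming \eqref{eq:G1-condition-for-g}, one stacks it on top of the hypothesized \eqref{eq:G1-for-F-alone-alpha-beta-gamma} to recover the full three-component identity \eqref{eq:G1-for-Fg-alpha-beta-gamma}. Together with the sign condition on $\alpha^{(i,\ii_1)}\alpha^{(i,\ii_2)}$, this is exactly Definition \ref{def:G1}, so the graph is $G^1$ at $\overline\Sigma^{(i)}$ and Proposition \ref{teo:C1-G1-graph-parametrization} yields $\igf \in \mathcal{V}^1$. The standard-form assumption $\tupleSigma{i}=\{(\ii_1,0),(\ii_2,1)\}$ makes the partial derivatives in \eqref{eq:G1-condition-for-g} and \eqref{eq:G1-for-F-alone-alpha-beta-gamma} match those in \eqref{eq:G1-for-Fg-alpha-beta-gamma}, and the $C^0$ compatibility $\f F^{(\ii_1)}(0,\xi)=\f F^{(\ii_2)}(\xi,0)$ (already encoded in $\igf \in \mathcal{V}^0$) ensures that the two patches can be compared along the common interface.

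The converse is the step that needs care. From $\igf \in \mathcal{V}^1$ and Proposition \ref{teo:C1-G1-graph-parametrization}, Definition \ref{def:G1} produces \emph{some} gluing data $(\tilde\alpha^{(i,\ii_1)},\tilde\alpha^{(i,\ii_2)},\tilde\beta^{(i)})$ that satisfies the full \eqref{eq:G1-for-Fg-alpha-beta-gamma}. To recover \eqref{eq:G1-condition-for-g} with the \emph{prescribed} data, I would show that any two gluing data satisfying the geometric part \eqref{eq:G1-for-F-alone-alpha-beta-gamma} differ only by a common nonvanishing scalar factor $\lambda(\xi)$. This is the main obstacle and rests on the non-singularity \eqref{eq:non-singular-F} of $\f F^{(\ii_1)}$: the vectors $\Du \f F^{(\ii_1)}(0,\xi)$ and $\Dv \f F^{(\ii_1)}(0,\xi)$ form a basis of $\RR^2$ at every $\xi$, so expanding $\Dv \f F^{(\ii_2)}(\xi,0)$ in this basis uniquely pins down the ratios $\alpha^{(i,\ii_2)}/\alpha^{(i,\ii_1)}$ and $\beta^{(i)}/\alpha^{(i,\ii_1)}$. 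The sign condition \eqref{eq:alpha-sign-condition} is preserved under any nonzero $\lambda$ (since the product scales by $\lambda^2$), so both data sets satisfy it simultaneously.

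Once this uniqueness-up-to-scaling is established, the proof concludes by writing the third component of \eqref{eq:G1-for-Fg-alpha-beta-gamma} with the tilded data, substituting $\tilde\alpha^{(i,\ii_\ell)}=\lambda\alpha^{(i,\ii_\ell)}$ and $\tilde\beta^{(i)}=\lambda\beta^{(i)}$, and dividing through by $\lambda(\xi)\neq 0$. This yields \eqref{eq:G1-condition-for-g} with the originally prescribed gluing data, closing the equivalence.
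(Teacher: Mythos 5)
Your proposal is correct and follows essentially the same route as the paper: both directions hinge on the observation that, thanks to the regularity condition \eqref{eq:non-singular-F}, any gluing data satisfying the geometric relation \eqref{eq:G1-for-F-alone-alpha-beta-gamma} is unique up to a common nonvanishing scalar factor, so the third component of \eqref{eq:G1-for-Fg-alpha-beta-gamma} reduces to \eqref{eq:G1-condition-for-g}. The only cosmetic difference is that the paper exhibits the solution explicitly via the determinant formulas \eqref{eq:gluing-data-explicit-formula} with arbitrary factor $\gamma(\xi)$, whereas you argue the same uniqueness-up-to-scaling abstractly from the linear independence of $\Du \f F^{(\ii_1)}(0,\xi)$ and $\Dv \f F^{(\ii_1)}(0,\xi)$.
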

\begin{proof}
 Due to the regularity condition \eqref{eq:non-singular-F}, for each $\xi \in
 \left[0,1\right]$, \eqref{eq:G1-for-F-alone-alpha-beta-gamma} are two
 linearly independent equations for $[\alpha^{(i,\ii_1)} (\xi),
 \alpha^{(i,\ii_2)}(\xi) , \beta^{(i)}(\xi)]$, whose solutions are, for
 completeness, 
 \begin{equation}\label{eq:gluing-data-explicit-formula}
   \begin{aligned}
  	\alpha^{(i,\ii_1)} (\xi)  & = \gamma (\xi)  \det  \left [\begin{array}{ll}
          \Du \f F^{(\ii_1)}(0,\xi) & 
        \Dv \f F^{(\ii_1)}(0,\xi) 
         \end{array}\right ],\\
 	\alpha^{(i,\ii_2)} (\xi)  & =  \gamma (\xi)  \det  \left [\begin{array}{ll}
         \Du \f F^{(\ii_2)}(\xi,0) & 
        \Dv \f F^{(\ii_2)}(\xi,0) 
        \end{array}\right ],\\
 \beta^{(i)} (\xi) & =  \gamma (\xi) \det  \left [\begin{array}{ll}
         \Dv \f F^{(\ii_2)}(\xi,0)  & 
        \Dv \f F^{(\ii_1)}(0,\xi) 
         \end{array}\right ],
   \end{aligned}
  \end{equation}
  where $ \gamma (\xi) $ is arbitrary and we have used $  \Dv  \f
  F^{(\ii_1)}(0,\xi)   =  \Du \f F^{(\ii_2)}(\xi,0) $.

Assume  now $\igf $ is
  $C^1$, that is, \eqref{eq:G1-for-Fg-alpha-beta-gamma} holds,  by Proposition~\ref{teo:C1-G1-graph-parametrization}. 
 The gluing data allowed in \eqref{eq:G1-for-Fg-alpha-beta-gamma}
are already determined by \eqref{eq:G1-for-F-alone-alpha-beta-gamma}, as given in \eqref{eq:gluing-data-explicit-formula}. The third equation 
of \eqref{eq:G1-for-Fg-alpha-beta-gamma}  is the same as  \eqref{eq:G1-condition-for-g}.

Conversely, if \eqref{eq:G1-condition-for-g}
  holds, together with \eqref{eq:G1-for-F-alone-alpha-beta-gamma} 
  it yields    \eqref{eq:G1-for-Fg-alpha-beta-gamma}, that is,  $\igf $ is
  $C^1$.
\end{proof}
 What this proposition shows, is that the gluing data are completely determined (up to a common factor $\gamma(\xi)$) by 
the patch parametrizations $\f F^{(\ii_1)}$, $\f F^{(\ii_2)}$. The $C^1$ condition on the isogeometric function $\varphi_h$ is then a linear constraint on the functions 
$f_h^{(\ii_1)}$, $f_h^{(\ii_2)}$ in the parameter domain. 
The proof above also shows that one can select the gluing
data such that   $\alpha^{(i,\ii_1)},\alpha^{(i,\ii_2)} \in \mathcal{S}^{2p-1,r-1}_h $ and $\beta^{(i)} \in \mathcal{S}^{2p,r}_h $.
A special case is when the gluing data are polynomial
functions. This happens in particular (but not only) for  B\'ezier patches, which is often the case
in literature. In this situation, since   $\alpha^{(i,\ii_1)}$,
$\alpha^{(i,\ii_2)}$ and $\beta^{(i)}$  are determined up to a common
multiplicative function, it is not restrictive to assume the
following.
\begin{assumption}[Simplification of gluing data]
{If $\alpha^{(i,\ii_1)}$ and $\alpha^{(i,\ii_2)}$ are polynomial functions, we assume that they are} relatively
prime (i.e.,
$\deg(\gcd(\alpha^{(i,\ii_1)},\alpha^{(i,\ii_2)}))=0$).
\end{assumption}
Note also that the choice of $\beta^{(i,\ii_1)}$ and $\beta^{(i,\ii_2)}$
is not unique.\footnote{Even if not necessary in this paper, from
  the practical point of view it is advisable to select stable  gluing data e.g. by minimizing 
$\|\alpha^{(i,\ii_1)}-1\|^2_{L^2(0,1)} + \|\alpha^{(i,\ii_2)}-1\|^2_{L^2(0,1)}$ as well as $\|\beta^{(i,\ii_1)}\|^2_{L^2(0,1)} + \|\beta^{(i,\ii_2)}\|^2_{L^2(0,1)}$. In case of  parametric continuity, i.e.,
$\beta^{(i)}\equiv 0$ and $\alpha^{(i,\ii_1)}=\alpha^{(i,\ii_2)}$, this implies $\beta^{(i,\ii_1)}\equiv\beta^{(i,\ii_2)}\equiv 0$ and $\alpha^{(i,\ii_1)}=\alpha^{(i,\ii_2)}\equiv 1$.}
One can show that there exist piecewise polynomial functions
$\beta^{(i,\ii_1)}$ and $\beta^{(i,\ii_2)}$ that satisfy equation
\eqref{eq:beta_function_of_alpha}.

\subsection{Analysis-suitable $G^1$ condition}
\label{sec:ASC1-isogeometric-spaces}

In order to ensure {optimal reproduction properties 
for the trace and normal derivative along the interfaces of the isogeometric space $\mathcal{V}^{1}$}, we introduce an additional condition  on the geometry
parametrization, as in \cite{CoSaTa16,KaSaTa17}, stated as a condition
on the gluing data $\alpha^{(i,\ii_1)}$, $\alpha^{(i,\ii_2)}$, $\beta^{(i,\ii_1)}$ and $\beta^{(i,\ii_2)}$.

\begin{definition}[Analysis-suitable $G^1$
  parametrization] \label{def:AS-G1} The  parametrizations $\f F^{(\ii_1)}
  $ and  $\f F^{(\ii_2)} $ are \emph{analysis-suitable $G^1$}  at the interface $ \Sigma^{(i)}$ (in short,
  AS-$G^1(\Sigma^{(i)})$ or  AS-$G^1$)  if there exist  gluing data $\alpha^{(i,\ii_1)}$, $\alpha^{(i,\ii_2)}$,
$\beta^{(i,\ii_1)}$ and $\beta^{(i,\ii_2)}$ that are linear polynomials and
such that \eqref{eq:G1-for-Fg-alpha-beta-gamma}--(\ref{eq:beta_function_of_alpha}) hold.
\end{definition}
The class of AS-$G^1$ parametrizations contains, but is not restricted to,  
bilinear parametrizations, see \cite{CoSaTa16,KaSaTa17b}. 
Note that this is a non-trivial requirement. However, it was 
shown in \cite{KaSaTa17b} that many multi-patch geometries can be 
reparametrized to satisfy the AS-$G^1$ constraints, motivating the
following assumption.
\begin{assumption}
We assume that for all interfaces $ \Sigma^{(i)}$, $i\in\indexSigmaint$, the parametrizations 
$\f F^{(\ii_1)}$ and  $\f F^{(\ii_2)} $ are analysis-suitable $G^1$.
\end{assumption}

\section{The Argyris isogeometric space $\W\subset \mathcal{V}^1$}
\label{sec:W}

Unlike $\mathcal{V}^0$, the isogeometric space $\mathcal{V}^1$ has a
complex structure. Every interface generates certain $C^1$ constraints, 
which are usually not independent of each other. 
At every interior vertex, several interfaces meet, which may lead to possibly conflicting constraints. 
The geometry needs to satisfy additional conditions there. 
In the context of computer-aided geometric design these are sometimes called
vertex enclosure constraints, see e.g. \cite{HoLa93}.  
In addition to this issue concerning vertices, 
the dimension of
 $\mathcal{V}^1$  depends also on the
 domain parametrization even for the simplest configuration of 
 two bilinear patches, as shown in \cite{KaSaTa17,KaViJu15}. For this reason,  instead of dealing with
 $\mathcal{V}^1$ itself, we introduce  a suitable
 subspace $\W \subset \mathcal{V}^1$ which is simpler to construct and
 has a dimension which is independent of the geometry
 parametrization. The space $\W$ is named Argyris isogeometric space
 since it represents an extension of the classical Argyris finite
 element space, as explained in the Introduction.

\subsection{Description and properties of $\W$}

The functions spanning the space $\W$ are standard isogeometric
functions within the patches, possess a special structure at the patch
interfaces and edges (which is motivated by \cite{CoSaTa16}) and are
$C^2$ at all vertices.

Precisely, we define $\W $  as the (direct) sum of interior, edge and vertex components: 
\begin{equation}
  \label{eq:W}
\W = 	\left(\bigoplus_{i\in\indexOmega}\W^{\circ}_{\Omega^{(i)}}\right)
	\oplus\left(\bigoplus_{i\in\indexSigma}\W^{\circ}_{\Sigma^{(i)}}\right)
	\oplus\left(\bigoplus_{i\in\indexX}\W_{{\f x}^{(i)}}\right).
\end{equation}
The patch-interior basis functions spanning $\W^{\circ}_{\Omega^{(i)}}$ 
have their support entirely contained in one patch. They are taken as those functions supported on the patch $\Omega^{(i)}$ 
which have vanishing function values and gradients at the entire boundary of the patch.

The edge-interior basis functions spanning $\W^{\circ}_{\Sigma^{(i)}}$ have their support 
contained in an ($h$-depen\-dent) neighborhood of the edge $\Sigma^{(i)}$. They span function values and normal derivatives 
along the interface and have vanishing derivatives up to second order at the endpoints of the interface (vertices of the multi-patch domain). 
They are thus supported in at most two patches.

The vertex basis functions spanning $\W_{{\f x}^{(i)}}$ are supported 
within an ($h$-dependent) neighborhood of the vertex ${\f x}^{(i)}$. There are exactly six such functions per vertex and they span 
the function value and all derivatives up to second order at the vertex. Hence, they are $C^2$ at the vertex by definition, see Proposition \ref{prop:smoothness-of-W}. 

The precise definitions of the different types of basis functions will be given in the three sections below, more precisely in 
Definitions \ref{lem:patch-space}, \ref{def:edge-space} and \ref{def:vertex-basis}, for the patch-interior, edge-interior and vertex basis, respectively.

Note that for splines of maximal smoothness $r=p-1$ within the patches
there is no convergence of the approximation error 
under $h$-refinement even on a simple bilinear two-patch domain, see \cite{CoSaTa16}. 
Hence, we have the following request.
\begin{assumption}[Maximal regularity within the patches]\label{ass:C(p-2)-in-patches}
  We assume $r\leq p-2$.
\end{assumption}
The assumption above is needed to  allow $h$-refinement (see
  \cite{CoSaTa16}).
Moreover, the  split \eqref{eq:W} itself
is well defined  if the spline spaces are sufficiently
refined, as stated in the next assumption.
\begin{assumption}[Minimal mesh resolution within the patches]\label{ass:minimal-mesh-within-patches}
  We assume $\mathcal{S}^{\f p,\f r}_{h} $ has  $ n \geq \frac{4-r}{p-r-1}$ elements per direction, that is  $h\leq
  \frac{p-r-1}{4-r}$. 
\end{assumption}

\begin{remark} A B\'ezier patch (i.e., $h=1$) fulfills Assumption
  \ref{ass:minimal-mesh-within-patches} and, trivially, Assumption
  \ref{ass:C(p-2)-in-patches},   for degree $p \geq 5$. This is the 
  case considered in  \cite{BeMa14,mourrain2015geometrically}. However, we study
  the subspace $\A \subset \mathcal{V}^{1}$.
\end{remark}

\begin{remark}
The dimensions of the subspaces in \eqref{eq:W} do not depend on the geometry and satisfy
\begin{equation*}
    \dim(\W^{\circ}_{\Omega^{(i)}}) = \left((p-r)(n-1)+p-3\right)^2,
\end{equation*}
for each $i\in\indexOmega$,
\begin{equation*}
    \dim(\W^{\circ}_{\Sigma^{(i)}}) = 2(p-r-1)(n-1)+p-9,
\end{equation*}
for each $i\in\indexSigma$, as well as 
\begin{equation*}
    \dim(\W_{{\f x}^{(i)}}) = 6,
\end{equation*}
for each $i\in\indexX$, cf. Definition~\ref{lem:patch-space}, \ref{def:edge-space}, \ref{def:boundary-edge-space} and Lemma~\ref{lem:full-vertex-proj-physical-intrpolation}.
Hence, the dimension of $\W$ is completely determined by the degree~$p$, the regularity~$r$, the number of elements in each direction~$n$, as well as the number of patches, edges and vertices, via
\begin{equation*}
    \dim(\W) = |\indexOmega| \cdot \left((p-r)(n-1)+p-3\right)^2 + |\indexSigma| \cdot \left(2(p-r-1)(n-1)+p-9\right) + |\indexX| \cdot 6.
\end{equation*}
\end{remark}

\subsection{The patch-interior function space $\W^{\circ}_{\Omega^{(i)}}$}
\label{sec:patch-inter-space}

For each patch $\Omega^{(i)}$ with $i\in\indexOmega$, we define a function space $\W^{\circ}_{\Omega^{(i)}}$ as the span of all basis functions supported in $\Omega^{(i)}$, which have vanishing function value and vanishing gradients at the patch boundary $\partial\Omega^{(i)}$. For the following definition, recall that $b_{\f j}$, for $\f j \in\mathbb{I}$, is the standard tensor-product B-spline basis for the spline space $\Spr$.

\begin{definition}[Patch-interior basis]\label{lem:patch-space}
Let $i\in\indexOmega$, then we define
\begin{equation}\label{eq:W-Omega-i}
  \W_{\Omega^{(i)}} = \text{span}\,\left\{\basisfct{i}{\f j}:     \overline \Omega^{(i)}  \rightarrow\RR\text{ such that }
    \basisfct{i}{\f j}\circ \f F^{(i)} = b_{\f j}, \mbox{ for } \f j
    \in \mathbb{I}\right\} 
\end{equation}
and 
\begin{equation}\label{eq:W-Omega-i-0}
  \W^{\circ}_{\Omega^{(i)}} = \text{span}\,\left\{\basisfct{i}{\f j}:
     \overline \Omega^{(i)}  \rightarrow\RR\text{ such that }
    \basisfct{i}{\f j}\circ \f F^{(i)} = b_{\f j}, \mbox{ for } \f j \in
\mathbb{I}^{\circ}_{\Omega^{(i)}} \right\} 
\end{equation}
with $   \mathbb{I}^{\circ}_{\Omega^{(i)}} =  \{2,\ldots,N-3\}^2
\subset \mathbb{I} $.
\end{definition}
With a little abuse of notation,  we consider the functions of  $
\W^{\circ}_{\Omega^{(i)}}  $ as defined on the whole $ \overline
\Omega$ by extending to zero outside $\overline \Omega^{(i)}$. We
easily have then $\W^{\circ}_{\Omega^{(i)}}
\subseteq \V^1$ and in particular 
\begin{displaymath}
  \W^{\circ}_{\Omega^{(i)}} = \left \{ \igf  \in \V^1 \text{ such that } \igf(\f x)= 0
    \text{ and } \nabla \igf(\f x)= \f 0, \, \text{ for all } \f x \in
    \overline \Omega \setminus  { \Omega^{(i)}} \right \}.
\end{displaymath}

We also define straightforwardly a projection operator onto the subspace $\W^{\circ}_{\Omega^{(i)}}$.
\begin{definition}[Patch-interior dual basis and projector]\label{def:patch-dual-basis}
Let $\{\lambda_{\f j} = \lambda_{j_1}\otimes\lambda_{j_2}\}_{\f j\in\mathbb{I}}$ 
be a dual basis for the basis $\{b_{\f j}\}_{\f j\in\mathbb{I}}$. For
each $i\in\indexOmega$, we define the projector
$\Pi_{\W^{\circ}_{\Omega^{(i)}}} : L^2\left(\Omega\right)\rightarrow
\mathcal{V}^1$ such that  
\begin{displaymath}
  \Pi_{\W^{\circ}_{\Omega^{(i)}}} (\varphi )= \sum_{\f j\in
  {\mathbb{I}^{\circ}_{\Omega^{(i)}}}} \Lambda_{\f j}(\varphi ) \basisfct{i}{\f j},
\end{displaymath}
where $ \Lambda_{\f j}(\varphi)= \lambda_{\f j}(\varphi\circ \f F^{(i)})$.
\end{definition}

\subsection{The edge function space $\W^{\circ}_{\Sigma^{(i)}}$}
\label{sec:edge-space}

In this section, we consider  first    the most interesting case of interior edges, that is
when $\Sigma^{(i)}$, for $i\in\indexSigmaint$, is an interface between the patches
$\Omega^{(\ii_1)}$ and $\Omega^{(\ii_2)}$. The extension to boundary edges is straightforward and 
will be discussed briefly after Definition \ref{def:edge-space}. The parametrizations  $\f
F^{(\ii_1)}$, $\f F^{(\ii_2)}$  are assumed to be in standard 
 form for $\Sigma^{(i)}$, see Section \ref{sec:local-parametrization}. 
The first step is the definition of a space over $ \overline \Omega^{(\ii_1)}\cup \overline
\Omega^{(\ii_2)}$, which   contains isogeometric
functions fulfilling the $C^1$ constraints at the interface
$\Sigma^{(i)}$.  We define $\W_{\Sigma^{(i)}}$ as the direct sum of 
two subspaces $\W_{\Sigma^{(i)},0}$ and $\W_{\Sigma^{(i)},1}$,
following the construction  in \cite{CoSaTa16}. The spaces 
span the function values and the cross derivative values along the
interface, respectively\footnote{The complete $C^1$  space is
  slightly larger for certain configurations, see \ref{sec:another_space} and \cite{KaSaTa17} for
  a construction of the complete basis.}.
The number of
elements in the  spaces  $\mathcal{S}^{p,r+1}_h $ and
$\mathcal{S}^{p-1,r}_h $  considered
below is the same  and is denoted by $n$.
\begin{definition}[Basis at the interfaces]\label{def:edge-space}
Let $\Sigma^{(i)}$, for $i\in\indexSigmaint$, be an interface in standard form. 
Consider  the univariate spline spaces $\mathcal{S}^+ = \mathcal{S}^{p,r+1}_h$
and $\mathcal{S}^- = \mathcal{S}^{p-1,r}_h$, with bases $\{ {b}^+_j\}_{j \in\mathbb{I}^+}$, and $\{{b}^-_j\}_{j
  \in\mathbb{I}^-}$, respectively, where 
$\mathbb{I}^\pm=\{0,\ldots,N^\pm-1\}$ with  $N^-=(p-r-1)(n-1)+p$ and
$N^+= N^-+1= (p-r-1)(n-1)+p+1$.
Recall that $\{ b_j\}_{j \in  \{0,1,\ldots,N-1\}}$ is the standard univariate B-spline basis 
for $\mathcal{S}^{p,r}_h$, where  $N=(p-r)(n-1)+p+1$.

 We define 
\begin{equation}
  \W_{\Sigma^{(i)},0} = \mbox{span}\, \left\{\basisfctSigma{i}{(j_1,0)}: \overline \Omega^{(\ii_1)}\cup \overline
\Omega^{(\ii_2)}  \rightarrow \mathbb{R}, \text { for } j_1 \in \mathbb{I}^+\right\},
\end{equation}
with
\begin{equation}\label{eq:bj-edge-trace}
  \begin{aligned}
    \basisfctSigma{i}{(j_1,0)}\circ \f F^{(\ii_1)}(\xi_1,\xi_2)=\bar{f}^{(i,\ii_1)}_{(j_1,0)}&=
 {b}^+_{j_1}(\xi_2)\ibasis_0(\xi_1) -  \beta^{(i,\ii_1)}(\xi_2) ({b}^+_{j_1})'(\xi_2) \ibasis_1(\xi_1) ,\\
   \basisfctSigma{i}{(j_1,0)}\circ \f F^{(\ii_2)}(\xi_1,\xi_2)=\bar{f}^{(i,\ii_2)}_{(j_1,0)}&=
 {b}^+_{j_1}(\xi_1)\ibasis_0(\xi_2) -  \beta^{(i,\ii_2)}(\xi_1) ({b}^+_{j_1})'(\xi_1) \ibasis_1(\xi_2) ,
  \end{aligned}
\end{equation}
where $\ibasis_0 = b_0+b_1$ and $\ibasis_1 = \frac{h}{p} b_1$, and 
\begin{equation}
  \W_{\Sigma^{(i)},1} = \mbox{span}\, \left\{\basisfctSigma{i}{(j_1,1)}: \overline \Omega^{(\ii_1)}\cup \overline
\Omega^{(\ii_2)}  \rightarrow \mathbb{R}, \text { for } j_1 \in \mathbb{I}^-\right\},
\end{equation}
with
\begin{equation}\label{eq:bj-edge-derivative}
  \begin{aligned}
    \basisfctSigma{i}{(j_1,1)}\circ \f F^{(\ii_1)}(\xi_1,\xi_2)=\bar{f}^{(i,\ii_1)}_{(j_1,1)}&=\alpha^{(i,\ii_1)}(\xi_2) {b}^-_{j_1}(\xi_2) { b_1}(\xi_1) ,\\
   \basisfctSigma{i}{(j_1,1)}\circ \f F^{(\ii_2)}(\xi_1,\xi_2)=\bar{f}^{(i,\ii_2)}_{(j_1,1)}&= - \alpha^{(i,\ii_2)}(\xi_1) {b}^-_{j_1}(\xi_1) { b_1}(\xi_2) .
  \end{aligned}
\end{equation}

We define 
\begin{equation}\label{eq:WSigma}
  \W_{\Sigma^{(i)}} = \W_{\Sigma^{(i)},0} \oplus \W_{\Sigma^{(i)},1} =
  \mbox{span}\, \left\{\basisfctSigma{i}{\f j}: \overline \Omega^{(\ii_1)}\cup \overline
\Omega^{(\ii_2)}  \rightarrow \mathbb{R}, \text { for } \f j \in \mathbb{I}_{\Sigma^{(i)}}\right\},
\end{equation}
with $\mathbb{I}_{\Sigma^{(i)}} = \left(\mathbb{I}^+ \times \{0\}\right) \cup \left(\mathbb{I}^- \times \{1\}\right)$.

Finally we  define the space $\W^{\circ}_{\Sigma^{(i)}}$ as 
\begin{equation}\label{eq:WSigma0}
  \W^{\circ}_{\Sigma^{(i)}} = 
  \mbox{span}\, \left\{\basisfctSigma{i}{\f j}: \overline \Omega^{(\ii_1)}\cup \overline
\Omega^{(\ii_2)}  \rightarrow \mathbb{R}, \text { for }\f j \in \mathbb{I}^{\circ}_{\Sigma^{(i)}}\right\},
\end{equation}
with $\mathbb{I}^{\circ}_{\Sigma^{(i)}} = \left\{ \f j \in
    \mathbb{I}_{\Sigma^{(i)}}:  j_1 + j_2 \geq 3 \text{ and }  j_1
    \leq N^--3 \right\}$.
\end{definition}

\begin{remark}
Assumption  \ref{ass:C(p-2)-in-patches}  guarantees that   $\mathcal{S}^+$ and  $\mathcal{S}^-$ are   proper spline
spaces, that is, piecewise polynomials when $h < 1$  (see also
Theorem 1 of~\cite{CoSaTa16}). Assumption
\ref{ass:minimal-mesh-within-patches} guarantees that  
the space $\W ^{\circ}_{\Sigma^{(i)}}$ is nonempty  
($N^-\geq 5$).
\end{remark}

For completeness, we give now the definition of the basis at the
boundary edge, which is just a simplification of the previous one.

\begin{definition}[Basis at the boundary edges]\label{def:boundary-edge-space}
Let $\Sigma^{(i)}$, for $i\in\indexSigmabound$, be a boundary edge in
standard form.  With the same notation as in Definition
\ref{def:edge-space}, we define 
\begin{equation}
  \W_{\Sigma^{(i)},0} = \mbox{span}\, \left\{\basisfctSigma{i}{(j_1,0)}: \overline \Omega^{(\ii_1)} \rightarrow \mathbb{R}, \text { for } j_1 \in \mathbb{I}^+\right\},
\end{equation}
with
\begin{equation}\label{eq:boundary-bj-edge-trace}
  \begin{aligned}
    \basisfctSigma{i}{(j_1,0)}\circ \f F^{(\ii_1)}(\xi_1,\xi_2)=\bar{f}^{(i,\ii_1)}_{(j_1,0)}&=
 {b}^+_{j_1}(\xi_2)\ibasis_0(\xi_1) ,
  \end{aligned}
\end{equation}
and 
\begin{equation}
  \W_{\Sigma^{(i)},1} = \mbox{span}\, \left\{\basisfctSigma{i}{(j_1,1)}: \overline \Omega^{(\ii_1)}\rightarrow \mathbb{R}, \text { for } j_1 \in \mathbb{I}^-\right\},
\end{equation}
with
\begin{equation}\label{eq:boundary-bj-edge-derivative}
  \begin{aligned}
    \basisfctSigma{i}{(j_1,1)}\circ \f F^{(\ii_1)}(\xi_1,\xi_2)=\bar{f}^{(i,\ii_1)}_{(j_1,1)}&={b}^-_{j_1}(\xi_2) { b_1}(\xi_1) .\\
  \end{aligned}
\end{equation}
We set 
\begin{equation}\label{eq:boundary-WSigma}
  \W_{\Sigma^{(i)}} = \W_{\Sigma^{(i)},0} \oplus \W_{\Sigma^{(i)},1} =
  \mbox{span}\, \left\{\basisfctSigma{i}{\f j}: \overline \Omega^{(\ii_1)} \rightarrow \mathbb{R}, \text { for } \f j \in \mathbb{I}_{\Sigma^{(i)}}\right\},
\end{equation}
with $\mathbb{I}_{\Sigma^{(i)}} = \left(\mathbb{I}^+ \times
  \{0\}\right) \cup \left(\mathbb{I}^- \times \{1\}\right)$, and 
\begin{equation}\label{eq:boundary-WSigma0}
  \W^{\circ}_{\Sigma^{(i)}} = 
  \mbox{span}\, \left\{\basisfctSigma{i}{\f j}: \overline \Omega^{(\ii_1)}\rightarrow \mathbb{R}, \text { for }\f j \in \mathbb{I}^{\circ}_{\Sigma^{(i)}}\right\},
\end{equation}
with $\mathbb{I}^{\circ}_{\Sigma^{(i)}} = \left\{ \f j \in
    \mathbb{I}_{\Sigma^{(i)}}:  j_1 + j_2 \geq 3 \text{ and }  j_1
    \leq N^--3 \right\}$.
\end{definition}

As for the patch-interior space, we can extend the functions
of  $\W_{\Sigma^{(i)}}$ and $\W^{\circ}_{\Sigma^{(i)}}$ to zero. Remarkably, we have the
following two  inclusions.
\begin{lemma}\label{lemma:edge-fun-1}
 We have  
\begin{equation*}
  \W_{\Sigma^{(i)}} \subset C^1(\Sigma^{(i)}). 
\end{equation*}
\end{lemma}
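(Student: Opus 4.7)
The plan is to reduce the claim to the gluing condition of Proposition~\ref{prop:characterization-of-C1} and verify it for each of the two spanning families of~\eqref{eq:WSigma}. Within each patch, every basis function is, by construction, the pullback under $\f F^{(\ii_k)}$ of a tensor-product spline in $\Spr$, hence $C^r$ (and in particular $C^1$) by Assumption~\ref{ass:C1-in-patches}; what remains is $C^0$ matching together with~\eqref{eq:G1-condition-for-g} along $\Sigma^{(i)}$. The boundary-edge case $i\in\indexSigmabound$ is immediate: after extension by zero outside $\Omega^{(\ii_1)}$, each function in~\eqref{eq:boundary-WSigma} is a single pullback of an element of $\Spr$, and is therefore already $C^r$ at every point of $\Sigma^{(i)}$.

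For an interior edge $\Sigma^{(i)}$ in standard form, the first step is to record the Hermite-like behaviour of $\ibasis_0$ and $\ibasis_1$ at $\xi=0$. From the open-knot-vector identities $b_0(0)=1$, $b_1(0)=0$, $b_0'(0)=-p/h$, $b_1'(0)=p/h$ one obtains $\ibasis_0(0)=1$, $\ibasis_0'(0)=0$, $\ibasis_1(0)=0$, $\ibasis_1'(0)=1$. These identities reduce the traces and the tangential and transversal first derivatives at the interface of $\basisfctSigma{i}{(j_1,0)}$ and $\basisfctSigma{i}{(j_1,1)}$ to short expressions in $b^\pm_{j_1}$, $(b^\pm_{j_1})'$, and the gluing data $\alpha^{(i,\ii_k)}$, $\beta^{(i,\ii_k)}$.

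The decisive step is a direct substitution into~\eqref{eq:G1-condition-for-g}. For $\basisfctSigma{i}{(j_1,0)}$ both traces equal $b^+_{j_1}$ (so $C^0$ matching holds), and the three terms of~\eqref{eq:G1-condition-for-g} collapse to
\[
(b^+_{j_1})'(\xi)\,\bigl[-\alpha^{(i,\ii_1)}(\xi)\beta^{(i,\ii_2)}(\xi) - \alpha^{(i,\ii_2)}(\xi)\beta^{(i,\ii_1)}(\xi) + \beta^{(i)}(\xi)\bigr],
\]
which vanishes by the defining identity~\eqref{eq:beta_function_of_alpha}. For $\basisfctSigma{i}{(j_1,1)}$ both traces are zero and $\Dv f_h^{(\ii_1)}(0,\xi)=0$, while the two nontrivial terms of~\eqref{eq:G1-condition-for-g} reduce to opposite multiples of $\alpha^{(i,\ii_1)}(\xi)\alpha^{(i,\ii_2)}(\xi)\,b^-_{j_1}(\xi)\,b_1'(0)$ and cancel identically. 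Linearity then promotes the conclusion from the basis to all of $\W_{\Sigma^{(i)}}$.

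The only obstacle I anticipate is bookkeeping: aligning the sign and orientation conventions of~\eqref{eq:bj-edge-trace}--\eqref{eq:bj-edge-derivative} with those of the gluing equation, and invoking \eqref{eq:beta_function_of_alpha} at precisely the right moment. There is no analytic subtlety, since the bases in~\eqref{eq:bj-edge-trace} and~\eqref{eq:bj-edge-derivative} are designed exactly so that~\eqref{eq:G1-condition-for-g} is satisfied term by term.
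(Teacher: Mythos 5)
Your proposal is correct and follows essentially the same route as the paper: verifying the gluing condition \eqref{eq:G1-condition-for-g} for both families \eqref{eq:bj-edge-trace} and \eqref{eq:bj-edge-derivative} and then invoking Proposition~\ref{prop:characterization-of-C1}. The only difference is that the paper asserts the gluing identity holds ``by construction'' and refers to \cite{CoSaTa16} for details, whereas you carry out the substitution explicitly (using $\ibasis_0(0)=1$, $\ibasis_0'(0)=0$, $\ibasis_1(0)=0$, $\ibasis_1'(0)=1$ and the identity \eqref{eq:beta_function_of_alpha}), which is a correct filling-in of that step.
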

\begin{proof}
 By construction, the pair of functions \eqref{eq:bj-edge-trace}
 fulfills condition \eqref{eq:G1-condition-for-g}, that is for all $\xi
 \in [0,1]$, 
 \begin{displaymath}
   	\alpha^{(i,\ii_1)} (\xi)  \Dv \bar{f}^{(i,\ii_2)}_{(j_1,0)} (\xi,0)  +
        \alpha^{(i,\ii_2)}(\xi) \Du  \bar{f}^{(i,\ii_1)}_{(j_1,0)} (0,\xi) + \beta^{(i)} (\xi)
        \Dv  \bar{f}^{(i,\ii_1)}_{(j_1,0)} (0,\xi)  =0,
 \end{displaymath}
 The same holds for
 the pair of functions \eqref{eq:bj-edge-derivative}: For all $\xi
 \in [0,1]$, 
 \begin{displaymath}
   	\alpha^{(i,\ii_1)} (\xi)  \Dv \bar{f}^{(i,\ii_2)}_{(j_1,1)} (\xi,0)  +
        \alpha^{(i,\ii_2)}(\xi) \Du  \bar{f}^{(i,\ii_1)}_{(j_1,1)} (0,\xi) + \beta^{(i)} (\xi)
        \Dv  \bar{f}^{(i,\ii_1)}_{(j_1,1)} (0,\xi)  =0.
 \end{displaymath}
The statement follows thanks to
 Proposition~\ref{prop:characterization-of-C1}, see \cite{CoSaTa16}
 for more details. 
\end{proof}
\begin{proposition}\label{lemma:edge-fun-2}
 We have 
\begin{equation*}
  \W^{\circ}_{\Sigma^{(i)}} \subset \V^1.
\end{equation*}
\end{proposition}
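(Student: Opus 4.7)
My plan is to split the $C^1$-regularity claim on all of $\Omega$ into three independent checks for a generic basis function $\basisfctSigma{i}{\f j}\in\W^{\circ}_{\Sigma^{(i)}}$ (extended by zero outside $\overline{\Omega^{(\ii_1)}}\cup\overline{\Omega^{(\ii_2)}}$): (i) smoothness inside each patch; (ii) $C^1$-continuity across the interface $\Sigma^{(i)}$; (iii) vanishing together with the full gradient on the other three edges of each of $\Omega^{(\ii_1)}$ and $\Omega^{(\ii_2)}$. Step (i) is immediate from Assumption~\ref{ass:C1-in-patches}, since inside a patch the function is the pushforward of a spline in $\Spr\subset C^1([0,1]^2)$. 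Step (ii) follows from $\W^{\circ}_{\Sigma^{(i)}}\subseteq\W_{\Sigma^{(i)}}$ together with Lemma~\ref{lemma:edge-fun-1}.

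Step (iii) is the substance of the proof, and I would treat it edge by edge in the parameter domain. On the edge opposite to the interface (in standard form, $\xi_1=1$ in $\Omega^{(\ii_1)}$ and $\xi_2=1$ in $\Omega^{(\ii_2)}$), the dependence of the formulae \eqref{eq:bj-edge-trace}--\eqref{eq:bj-edge-derivative} on the normal parameter involves only $b_0$, $b_1$ and the combinations $\ibasis_0=b_0+b_1$, $\ibasis_1=\frac{h}{p}b_1$. Under Assumption~\ref{ass:C(p-2)-in-patches} one has $p-r\geq 2$, so for $n\geq 2$ both $b_0$ and $b_1$ are supported in $[0,h]\subsetneq[0,1]$; in the extremal B\'ezier case $n=1$ (admissible only for $p\geq 3$), direct evaluation of the Bernstein polynomials $b_0=(1-\xi_1)^p$, $b_1=p\xi_1(1-\xi_1)^{p-1}$ shows that $b_0,b_1$ and their first derivatives vanish at $\xi_1=1$. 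Either way, the basis function and its gradient vanish on the opposite edge.

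On the two remaining edges containing the endpoint vertices of $\Sigma^{(i)}$, I would expand the basis function and its two partial derivatives via the product rule and restrict to $\xi_2\in\{0,1\}$ (respectively $\xi_1\in\{0,1\}$ in $\Omega^{(\ii_2)}$). The resulting expressions are linear combinations, in the tangential variable, of $\ibasis_0$, $\ibasis_1$, $b_1$, whose coefficients involve the boundary values of $b^+_{j_1}$, $(b^+_{j_1})'$, $(b^+_{j_1})''$ for trace-type functions and of $b^-_{j_1}$, $(b^-_{j_1})'$ for derivative-type functions, possibly multiplied by $\alpha^{(i,\ii_\ell)}$, $\beta^{(i,\ii_\ell)}$ or their derivatives evaluated at the vertex. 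Using the linear independence of $\ibasis_0,\ibasis_1,b_1$, simultaneous vanishing of the three coefficients forces the vanishing of the boundary values of $b^\pm_{j_1}$ up to appropriate order; via the standard open-knot boundary behavior of B-splines, these equations translate exactly into the restrictions $j_1+j_2\geq 3$ and $j_1\leq N^--3$ defining $\mathbb{I}^{\circ}_{\Sigma^{(i)}}$. The \emph{main obstacle} in this step is checking that the product-rule contributions from $(\alpha^{(i,\ii_\ell)})'$ and $(\beta^{(i,\ii_\ell)})'$, which are generically nonzero at the vertex, are killed by exactly the same restrictions and produce no additional conditions. The boundary-edge case $i\in\indexSigmabound$ is treated by the same strategy using Definition~\ref{def:boundary-edge-space}; it is simpler since step (ii) is vacuous and the factors $\alpha^{(i,\ii_1)}$, $\beta^{(i,\ii_1)}$ do not appear.
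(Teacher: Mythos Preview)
Your proposal is correct and follows essentially the same plan as the paper: combine Lemma~\ref{lemma:edge-fun-1} for the $C^1$ gluing across $\Sigma^{(i)}$ with a verification that the basis functions have vanishing trace and gradient on all other patch edges, so that the zero extension lies in $\mathcal{V}^1$. The paper compresses your step~(iii) into the phrase ``by construction'' (after noting that $\W^{\circ}_{\Sigma^{(i)}}$ consists precisely of the functions in $\W_{\Sigma^{(i)}}$ with null value, gradient and Hessian at the interface endpoints), whereas you spell out the explicit B-spline boundary computations that justify it; in particular your remark that the $(\alpha^{(i,\ii_\ell)})'$ and $(\beta^{(i,\ii_\ell)})'$ contributions are absorbed by the same index restrictions is exactly the content behind that assertion.
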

\begin{proof}
One can easily  characterize
$\W^{\circ}_{\Sigma^{(i)}}$ as the subset of functions of
$\W_{\Sigma^{(i)}}$ having null value, gradient and Hessian at the
edge endpoints. Then, by construction, the functions in
$\W^{\circ}_{\Sigma^{(i)}}$ have vanishing trace and derivative at
$\Sigma^{(i')}$, for all $i' \in \indexSigma$, $i' \neq i$. The statement follows.
\end{proof}

We further define a dual basis and projection operator  onto $\W^{\circ}_{\Sigma^{(i)}}$.
\begin{definition}[Edge-interior dual basis and projector]\label{def:edge-space-dual-proj}
Let $\{\lambda^+_{j}\}_{j\in\II^+}$ be  a dual basis for
$\{b^+_{j}\}_{j\in\II^+}$ and $\{\lambda^-_{j}\}_{j\in\II^-}$  a dual
basis for $\{b^-_{j}\}_{j\in\II^-}$. We define  
\begin{equation*}
  \overline{\Lambda}^{(i)}_{(j_1,0)}(\varphi) = \lambda^+_{j_1}\left(\varphi\circ\f F^{(\ii_1)}(0,\bullet)\right), 
\end{equation*}
\begin{equation*}
  \overline{\Lambda}^{(i)}_{(j_1,1)}(\varphi) = \lambda^-_{j_1}\left({\frac{h}{p}}(\nabla\varphi)\circ\f F^{(\ii_1)}(0,\bullet) \cdot {\fn}^{(i)}(\bullet)\right),
\end{equation*}
where  $\fn^{(i)}$  is the transversal vector 
$$
  \fn^{(i)}(\xi) = \frac{1}{\alpha^{(i,\ii_{1})}(\xi)}\left( \Du \f F^{(\ii_{1})}(0,\xi) + \beta^{(i,\ii_{1})}(\xi) \, \Dv \f F^{(\ii_{1})}(0,\xi)\right).
$$
We define the projector $ \Pi_{\W^{\circ}_{\Sigma^{(i)}}} :
C^1\left( \Sigma^{(i)} \right) \rightarrow \W^{\circ}_{\Sigma^{(i)}} $  such that
\begin{equation}
  \label{eq:projector-on-edge-space}
  \Pi_{\W^{\circ}_{\Sigma^{(i)}}} ( \varphi) = \sum_{\f j\in {\mathbb{I}^{\circ}_{\Sigma^{(i)}}}} \overline{\Lambda}^{(i)}_{\f j}(\varphi) \basisfctSigma{i}{\f j}.
\end{equation}
\end{definition}
\begin{remark}
The projector $ \Pi_{\W^{\circ}_{\Sigma^{(i)}}}$ inherits its properties (such as the locality of the support) 
from the basis functions $\basisfctSigma{i}{\f j}$ 
and from the univariate dual bases $\{\lambda^+_{j}\}_{j\in\II^+}$ and $\{\lambda^-_{j}\}_{j\in\II^-}$.
\end{remark}
\begin{remark} One can define  $\Pi_{\W^{\circ}_{\Sigma^{(i)}}} $
  beyond $C^1\left( \Sigma^{(i)} \right)$, e.g. $H^{3/2 +
    \epsilon}(\Omega)$, for $\epsilon > 0$, suffices.
\end{remark}

\subsection{The vertex function space $\W_{{\f x}^{(i)}}$}
\label{sec:vertex_space}

Let  $i \in \indexX$, and ${\f x}^{(i)}$  be a  vertex with $\Sigma^{(\ii_1)}$,   $\Omega^{(\ii_2)}$, $\Sigma^{(\ii_3)}$, \ldots,
$\Omega^{(\ii_{2\nu})}$, $\Sigma^{(\ii_{2\nu+1} )}$  the sequence of edges and patches 
around ${\f x}^{(i)}$ in counterclockwise order. Throughout this
section, we always assume that the parametrizations $\f F^{(\ii_2)} $, \ldots,  $\f F^{(\ii_{2\nu})} $ are in
standard form for the vertex  ${\f x}^{(i)}$,   as stated in Section \ref{sec:local-parametrization}.

We define the vertex function space $\W_{{\f x}^{(i)}}$ via a suitable projection operator.
\begin{lemma}\label{lem:full-vertex-proj-physical-intrpolation}
There exists a function space $\W_{{\f x}^{(i)}} \subset \V^1$ of dimension $6$ and a suitable projector $\Pi_{\W_{{\f x}^{(i)}}} : C^2({\f x}^{(i)}) \rightarrow \W_{{\f x}^{(i)}} \subset \V^1$, such that for all $\varphi \in C^2({\f x}^{(i)}) $ it holds 
\begin{equation}\label{eq:vertex_interpolation_sum} 
  \partial_{x_1}^{m_1} \partial_{x_2}^{m_2} \, ( \Pi_{\W_{{\f
        x}^{(i)}}} \varphi) ({\f x}^{(i)}) =  \partial_{x_1}^{m_1} \partial_{x_2}^{m_2} \, \varphi ({\f x}^{(i)}) 
\end{equation}
for $ m_1, m_2\geq 0 $ and $m_1+m_2\leq 2$.
\end{lemma}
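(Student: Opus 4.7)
The plan is to construct six explicit basis functions $\basisfctX{i}{\f m}$, indexed by $\f m = (m_1, m_2)$ with $m_1 + m_2 \leq 2$, each supported in an $h$-neighborhood of ${\f x}^{(i)}$, belonging to $\V^1 \cap C^2({\f x}^{(i)})$, and normalized by
$$
\partial_{x_1}^{k_1}\partial_{x_2}^{k_2} \basisfctX{i}{\f m}({\f x}^{(i)}) = \delta_{\f m,(k_1,k_2)}, \quad \text{for all } k_1+k_2 \leq 2.
$$
Once these are in place, set $\W_{{\f x}^{(i)}} := \mathrm{span}\{\basisfctX{i}{\f m} : m_1+m_2 \leq 2\}$, which then has dimension six, and define the projector
$$
\Pi_{\W_{{\f x}^{(i)}}}(\varphi) := \sum_{m_1+m_2 \leq 2} \bigl(\partial_{x_1}^{m_1}\partial_{x_2}^{m_2}\varphi\bigr)({\f x}^{(i)}) \, \basisfctX{i}{\f m}.
$$
The interpolation identity \eqref{eq:vertex_interpolation_sum} is then an immediate consequence of the normalization by linearity.

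To construct each $\basisfctX{i}{\f m}$, I work patchwise. On every incident patch $\Omega^{(\ii_{2\ell})}$, I look for the pullback $\basisfctX{i}{\f m} \circ \f F^{(\ii_{2\ell})} \in \Spr$ with support in a small corner neighborhood of $(0,0)$, built as a linear combination of the B-splines $b_{\f j}$ for $\f j \in \{0,1,2\}^2$. By Assumption~\ref{ass:minimal-mesh-within-patches} the mesh is fine enough that these nine coefficients determine the Taylor expansion up to order two at the corner independently of the rest of the spline, yielding $9\nu$ local parameters around ${\f x}^{(i)}$. Extending by zero outside the chosen local supports does not affect the smoothness there, and by taking the support sufficiently small, the constructed functions vanish on every edge $\Sigma^{(j)}$ with $j \neq \ii_{2\ell\pm 1}$.

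The patchwise coefficients must then be linked by two families of linear constraints. First, the $C^1$-gluing condition of Proposition~\ref{prop:characterization-of-C1} at each interface $\Sigma^{(\ii_{2\ell+1})}$ meeting ${\f x}^{(i)}$: since the AS-$G^1$ hypothesis makes $\alpha^{(\cdot,\cdot)}$ and $\beta^{(\cdot,\cdot)}$ linear polynomials, the functional identity \eqref{eq:G1-condition-for-g} restricted near $\xi = 0$ reduces to a finite set of scalar equations in the local coefficients. Second, I enforce $C^2$ at the vertex, requiring the parametric second derivatives at $(0,0)$ to produce a common Hessian in physical coordinates across all adjacent patches; by the chain rule and the non-singularity condition \eqref{eq:non-singular-F}, the six physical derivatives of order at most two at ${\f x}^{(i)}$ are in linear bijection with the parametric Taylor coefficients of order at most two at $(0,0)$, for any single adjacent patch.

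The main obstacle is to show that the coupled linear system cut out by the $G^1$ and $C^2$-matching conditions has a solution space of dimension exactly six, parametrized one-to-one by the six physical-derivative values at ${\f x}^{(i)}$. This is the analog of the \emph{vertex enclosure} compatibility familiar from classical geometric design (cf.\ \cite{HoLa93}). The AS-$G^1$ hypothesis is precisely what ensures that, when the $G^1$ identities are cycled through all $\nu$ interfaces around ${\f x}^{(i)}$, the resulting closure condition at $\xi=0$ is automatically satisfied rather than imposing a spurious extra constraint; this is verified by a direct Taylor-expansion computation at the vertex in the spirit of \cite{CoSaTa16}. With full rank thus established, selecting the six solutions dual to the six physical derivative functionals at ${\f x}^{(i)}$ yields the desired $\basisfctX{i}{\f m}$. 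The boundary-vertex case is analogous, with the two boundary edges contributing no gluing constraint and the dimension count again giving exactly six free parameters.
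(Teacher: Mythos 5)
Your overall scaffolding (six locally supported functions dual to the six derivative functionals, then a projector defined by Hermite data) is fine, but the heart of the lemma is the existence step, and that is exactly where the proposal has a gap. You reduce everything to the claim that the coupled linear system formed by the $G^1$ gluing conditions \eqref{eq:G1-condition-for-g} around ${\f x}^{(i)}$ together with the common-Hessian conditions has a solution space of dimension exactly six, surjecting onto the prescribed second-order data; but you never verify this, you only assert that the cyclic closure condition is ``automatically satisfied'' by AS-$G^1$ and refer to an unspecified Taylor-expansion computation. That computation \emph{is} the nontrivial content: in the paper it is carried out explicitly (equations \eqref{eq:fphiF}--\eqref{eq:fd} in the appendix, proving Lemmas \ref{lem:vertex-edge-space} and \ref{lem:full-vertex-proj-parametric-intrpolation}), and the paper deliberately never forms or solves a cyclic vertex system at all. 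Instead it defines $\Pi_{\W_{{\f x}^{(i)}}}=\Pi_{\lW{1}}-\Pi_{\lW{2}}+\cdots-\Pi_{\lW{2\nu}}$, an alternating sum of Hermite projectors onto the patch-corner spaces ($k$ even) and the edge spaces ($k$ odd); interpolation follows by a telescoping cancellation, and $C^1$-smoothness follows because on each patch the sum collapses to one edge term, which is $C^1$ across the interface by Lemma \ref{lemma:edge-fun-1}, plus a difference whose value and gradient vanish along the interface. Note also that the mechanism you credit is misplaced: what defuses the vertex-enclosure obstruction here is the prescription of full $C^2$ data at the vertex combined with the structure of the edge spaces, not the AS-$G^1$ hypothesis by itself (AS-$G^1$ enters through the linear gluing data and the edge-space construction). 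So as written, your ``with full rank thus established'' assumes precisely what must be proved.

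A second, more concrete problem is your ansatz that each patchwise pullback lives in the span of the corner B-splines $b_{\f j}$, $\f j\in\{0,1,2\}^2$. The functions actually needed do not fit this footprint in general: the edge contributions involve ${b}^+_{j_1}$, $\beta^{(i,\ii_1)}\,({b}^+_{j_1})'$ and $\alpha^{(i,\ii_1)}\,{b}^-_{j_1}$, whose expansions in $\Spru$ occupy more than three coefficients along the interface direction (see Figure \ref{fig:vertex_basis_representation}); for $\beta^{(i)}\not\equiv 0$ the restriction to a $3\times 3$ corner block may make it impossible to realize all six vertex data while satisfying \eqref{eq:G1-condition-for-g}, so your dimension-six claim could fail within the chosen ansatz, or at the very least requires a separate argument that the proposal does not supply. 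If you want to keep your route, you would have to enlarge the local footprint to accommodate the edge-space structure and then actually perform the rank computation at $\xi=0$ (the analogue of \eqref{eq:dphi} and \eqref{eq:fd}), including the boundary-vertex cases; at that point you would essentially have reconstructed the paper's proof.
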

We will give a constructive proof of this lemma later. To do so, we need to establish some preliminary constructions. Having given such an interpolatory projector, we can now define a basis for the vertex space $\W_{{\f x}^{(i)}}$.
\begin{definition}[Basis at the vertices]\label{def:vertex-basis}
Let $\W_{{\f x}^{(i)}}$ and $\Pi_{\W_{{\f x}^{(i)}}} $ be as in Lemma \ref{lem:full-vertex-proj-physical-intrpolation}. Let
  \begin{displaymath}
    \sigma = \left (  \frac{h}{ p \, \nu} \sum_{\ell = 1}^\nu \|\nabla \f
      F^{(\ii_{2\ell})} (0,0) \| \right ) ^{-1};
  \end{displaymath}
then
\begin{equation}\label{eq:vertex-basis-representation}
  \W_{\f x^{(i)}} = \mbox{span}\, \left\{ \basisfctX{i}{\f j}:\f j \in \mathbb{I}_{\f x^{(i)}}\right\} \subset \V^1,
\end{equation}
where $\basisfctX{i}{\f j} $
\begin{equation}\label{eq:basisfctX-defn}
	\basisfctX{i}{\f j} = \Pi_{\W_{{\f x}^{(i)}}} \left( \sigma ^{j_1+j_2}\frac{(x_1-x^{(i)}_1)^{j_1}}{j_1 !}\frac{(x_2-x^{(i)}_2)^{j_2}}{j_2 !} \right),
\end{equation}
and $	\mathbb{I}_{\f x^{(i)}} = \left\{ \f j = (j_1,j_2): \; 0 \leq j_1,j_2 \mbox{ and } j_1+j_2\leq 2 \right\}$.
\end{definition}

\begin{remark}
  The factor $\sigma ^{j_1+j_2}$ in \eqref{eq:basisfctX-defn} is
  introduced to guarantee a uniform scaling, in $L^\infty$, of the
  basis functions. Indeed the vertex basis functions fulfill
\begin{displaymath}
	\partial_{x_1}^{m_1}\partial_{x_2}^{m_2} \left(\basisfctX{i}{\f j}\right)(\f x^{(i)}) = \sigma ^{j_1+j_2}\delta_{j_1}^{m_1}\;\delta_{j_2}^{m_2}
\end{displaymath}
for $0 \leq m_1\leq 2$, $0\leq m_2\leq 2$ and $m_1+m_2\leq 2$,  where $\delta_{j}^{m}$ is the Kronecker delta. 
\end{remark}
%

The construction is organized in two steps: In the first part we identify in each patch  a set of functions $\basisfct{\ell}{\f j}$ and
$\basisfctSigma{\ell}{\f j}$ with  specific interpolation properties at
the vertex ${\f x}^{(i)}$; then  we
combine   the functions above to define  global $C^1$ isogeometric
functions that allow  interpolation up to second order derivatives at the
vertex ${\f x}^{(i)}$. 
\begin{definition}
  Let $k$ be an even index. We define the space $\lW{k}$ of dimension $4$ as 
\begin{equation*}
\lW{k} = \rm{span} \left\{ \basisfct{\ii_k}{\f j}:
\f j \in \{0, 1\}^2 \right\},
\end{equation*}
where $\basisfct{\ii_k}{\f j}$ are given as in \eqref{eq:W-Omega-i}.
\end{definition}
\begin{lemma}\label{lem:vertex-patch-space}
For even  $k$, there exists a unique projector 
\begin{equation*}
\begin{array}{lll}
  \Pi_{\lW{k}} : &C^2\left( {\f x}^{(i)} \right) &\rightarrow \lW{k},
\end{array}
\end{equation*}
such that 
\begin{equation}\label{eq:intrpolation-prop-for-corner-subspace}
  \Du^{m_1} \Dv^{m_2} \, \left((\Pi_{\lW{k}}\varphi)\circ \f F^{(\ii_{k})}\right)(0,0) 
  = \Du^{m_1} \Dv^{m_2} \, \left(\varphi \circ \f F^{(\ii_{k})}\right)(0,0)
\end{equation}
for $0\leq m_1\leq 1$, $0\leq m_2\leq 1$.
\end{lemma}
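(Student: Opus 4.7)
The plan is to show that the interpolation conditions \eqref{eq:intrpolation-prop-for-corner-subspace} translate into a square linear system for the coefficients of the projected function expressed in the basis of $\lW{k}$, and that this system is uniquely solvable thanks to the structure of the univariate B-spline basis at the endpoint of an open knot vector.

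First I would write any candidate image as $\Pi_{\lW{k}}\varphi = \sum_{\f j \in \{0,1\}^2} c_{\f j}(\varphi)\,\basisfct{\ii_k}{\f j}$, so that by the defining property $\basisfct{\ii_k}{\f j}\circ\f F^{(\ii_k)} = b_{\f j}$ in \eqref{eq:W-Omega-i}, the pulled-back condition \eqref{eq:intrpolation-prop-for-corner-subspace} reads
\begin{equation*}
   \sum_{\f j \in \{0,1\}^2} c_{\f j}(\varphi)\, \Du^{m_1}\Dv^{m_2} b_{\f j}(0,0)
   \;=\; \Du^{m_1}\Dv^{m_2}\!\left(\varphi\circ\f F^{(\ii_k)}\right)(0,0),
   \qquad (m_1,m_2)\in\{0,1\}^2.
\end{equation*}
Both sides are well defined since $\varphi\in C^2(\f x^{(i)})$ and $\f F^{(\ii_k)}$ is smooth, so the right-hand side is a finite linear combination of derivatives of $\varphi$ at $\f x^{(i)}$.

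Next I would exploit the tensor-product structure $b_{\f j}(\xi_1,\xi_2)=b_{j_1}(\xi_1)b_{j_2}(\xi_2)$ to write the coefficient matrix as $M = A\otimes A$, where
\begin{equation*}
A \;=\; \begin{pmatrix} b_0(0) & b_1(0) \\ b_0'(0) & b_1'(0) \end{pmatrix}.
\end{equation*}
The open knot vector at $\xi=0$ (with knot multiplicity $p+1$) gives $b_0(0)=1$, $b_1(0)=0$, and $b_0'(0)=-b_1'(0)=-p/h\neq 0$, so $A$ is lower triangular with nonzero diagonal. Hence $\det M = (\det A)^4 = (p/h)^4 \neq 0$, and the linear system has a unique solution $c_{\f j}(\varphi)$ for every $\varphi\in C^2(\f x^{(i)})$. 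This defines $\Pi_{\lW{k}}$ as a linear map from $C^2(\f x^{(i)})$ into $\lW{k}$ and establishes both existence and uniqueness of such an operator satisfying \eqref{eq:intrpolation-prop-for-corner-subspace}.

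Finally I would verify the projection property $\Pi_{\lW{k}}^2 = \Pi_{\lW{k}}$. If $\varphi\in\lW{k}$, then $\varphi$ and $\Pi_{\lW{k}}\varphi$ both lie in $\lW{k}$ and satisfy the same four interpolation conditions at $(0,0)$; by the uniqueness just established (the map from a function in $\lW{k}$ to the quadruple of its $\Du^{m_1}\Dv^{m_2}$-values at $(0,0)$ is injective, since its matrix is $M$), we conclude $\Pi_{\lW{k}}\varphi=\varphi$, so $\Pi_{\lW{k}}$ is indeed a projector onto $\lW{k}$. No step is really an obstacle here: the only subtlety is confirming that the B-spline values and first derivatives at the clamped endpoint produce a triangular matrix with nonzero diagonal, which is immediate from the open knot vector convention used throughout the paper.
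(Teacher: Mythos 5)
Your argument is correct and is essentially the paper's own proof spelled out in detail: the paper simply appeals to the classical interpolation (Hermite-at-the-corner) properties of the tensor-product B-spline basis, which is exactly the $4\times 4$ system with matrix $A\otimes A$, $A$ triangular with diagonal $b_0(0)=1$, $b_1'(0)=p/h\neq 0$, that you exhibit. The explicit verification of invertibility and of idempotency is a welcome but equivalent elaboration, not a different route.
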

\begin{proof}
The existence of such an operator follows from the classical interpolation properties of the standard tensor-product B-spline basis. 
\end{proof}
\begin{definition} 
  Let $k$ be an odd index. We define the space $\lW{k}$ of dimension $5$ as
\begin{equation*}
  \lW{k} = \rm{span} \left\{ \basisfctSigma{\ii_k}{\f j}:
  0\leq j_1, 0\leq j_2 \leq 1, j_1+j_2 \leq 2  \right\},
\end{equation*}
where the functions $\basisfctSigma{\ii_k}{\f j}$ are given as in
\eqref{eq:bj-edge-trace} {and \eqref{eq:bj-edge-derivative}} if
$\Sigma^{(\ii_k)}$ is an interface, or in \eqref{eq:boundary-bj-edge-trace} {and \eqref{eq:boundary-bj-edge-derivative}} if
$\Sigma^{(\ii_k)}$ is a boundary edge.
\end{definition}
\begin{lemma}\label{lem:vertex-edge-space}
Let $k$ be an odd index. 
There exists a unique projector  
\begin{equation*}
\begin{array}{lll}
  \Pi_{\lW{k}} : &C^2\left( {\f x}^{(i)}\right) &\rightarrow \lW{k},
\end{array}
\end{equation*}
such that if $\ii_{k-1} \in \indexOmega$, for $0\leq m_1\leq 1$, $0\leq
m_2\leq 2$ and $m_1+m_2\leq 2$ it holds
\begin{equation}\label{eq:vertex_interpolation_k-1}
  \Du^{m_1} \Dv^{m_2} \, \left((\Pi_{\lW{k}}\varphi)\circ \f F^{(\ii_{k-1})}\right)(0,0) 
  = \Du^{m_1} \Dv^{m_2} \, \left(\varphi \circ \f F^{(\ii_{k-1})}\right)(0,0);
\end{equation}
if $i_{k+1} \in \indexOmega$, for $0\leq m_1\leq 2$, $0\leq m_2\leq 1$
and $m_1+m_2\leq 2$ it holds
\begin{equation}\label{eq:vertex_interpolation_k+1}
  \Du^{m_1} \Dv^{m_2} \, \left((\Pi_{\lW{k}}\varphi)\circ \f F^{(\ii_{k+1})}\right)(0,0) 
  = \Du^{m_1} \Dv^{m_2} \, \left(\varphi \circ \f F^{(\ii_{k+1})}\right)(0,0).
\end{equation}
\end{lemma}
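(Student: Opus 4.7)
The plan is to construct $\Pi_{\lW{k}}$ by solving the five interpolation conditions on the $\ii_{k-1}$ patch side (assuming $\ii_{k-1}\in\indexOmega$) via an invertible linear system, and then, when also $\ii_{k+1}\in\indexOmega$, to verify that the corresponding conditions on the $\ii_{k+1}$ side are automatically satisfied.

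First I would set up the $5\times 5$ interpolation matrix $M$ with rows indexed by $(m_1,m_2)\in\{(0,0),(0,1),(0,2),(1,0),(1,1)\}$ and columns indexed by the five basis indices $\f j$ with $j_2=0,\,j_1\in\{0,1,2\}$ or $j_2=1,\,j_1\in\{0,1\}$, and entries $M_{(m_1,m_2),\f j}=\Du^{m_1}\Dv^{m_2}\bigl(\basisfctSigma{\ii_k}{\f j}\circ\f F^{(\ii_{k-1})}\bigr)(0,0)$. Using \eqref{eq:bj-edge-trace}--\eqref{eq:bj-edge-derivative} together with $\ibasis_0(0)=1$, $\ibasis_0'(0)=0$, $\ibasis_1(0)=0$, $\ibasis_1'(0)=1$ and $b_1(0)=0$, the matrix $M$ turns out to be block lower triangular: the trace basis ($j_2=0$) feeds the three trace conditions ($m_1=0$) through a $3\times 3$ diagonal block $[(b^+_{j_1})^{(m_2)}(0)]$, while the transversal basis ($j_2=1$) feeds the two transversal conditions ($m_1=1$) through a $2\times 2$ diagonal block proportional to $[(\alpha^{(\ii_k,\ii_{k-1})}b^-_{j_1})^{(m_2)}(0)]$. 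The open-knot-vector Bernstein--Hermite structure of the B-spline bases makes each diagonal block itself lower triangular with nonvanishing diagonal, and the sign condition \eqref{eq:alpha-sign-condition} guarantees $\alpha^{(\ii_k,\ii_{k-1})}(0)\neq 0$. Hence $M$ is invertible, which defines $\Pi_{\lW{k}}\varphi$ uniquely. The boundary subcase with only $\ii_{k+1}\in\indexOmega$ is handled symmetrically, swapping the roles of $\ii_{k-1}$ and $\ii_{k+1}$.

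Second, when both $\ii_{k\pm 1}\in\indexOmega$, the additional five conditions \eqref{eq:vertex_interpolation_k+1} on the $\ii_{k+1}$ side should follow automatically. The three trace conditions ($m_2=0$) reduce to those on the $\ii_{k-1}$ side via the standard-form identity $\f F^{(\ii_{k-1})}(0,\xi)=\f F^{(\ii_{k+1})}(\xi,0)$. For the two transversal conditions ($m_2=1$), the key observation is that the $C^1$ gluing identity \eqref{eq:G1-condition-for-g} holds for every $f\in\lW{k}$ by Lemma~\ref{lemma:edge-fun-1} and also for every $\varphi\in C^2(\f x^{(i)})$, because the chain rule turns it into the purely parametric $G^1$ identity \eqref{eq:G1-for-F-alone-alpha-beta-gamma} dotted with $\nabla\varphi(\f x^{(i)})$. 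Evaluating \eqref{eq:G1-condition-for-g} at $\xi=0$ expresses $\Dv(\cdot\circ\f F^{(\ii_{k+1})})(0,0)$ as a fixed linear combination of first derivatives on the $\ii_{k-1}$ side, and differentiating once in $\xi$ before setting $\xi=0$ does the same for $\Du\Dv(\cdot\circ\f F^{(\ii_{k+1})})(0,0)$ in terms of the five $\ii_{k-1}$-side derivatives already prescribed. Thus $\Pi_{\lW{k}}\varphi$ and $\varphi$, matching on the $\ii_{k-1}$ side by construction, automatically match on the $\ii_{k+1}$ side.

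The hard part is this compatibility step, which requires careful bookkeeping of the values and first $\xi$-derivatives at $0$ of the gluing data $\alpha^{(\ii_k,\ii_{k\pm 1})}$ and $\beta^{(\ii_k)}$; no geometric hypothesis is needed beyond the AS-$G^1$ assumption already in force. Uniqueness of $\Pi_{\lW{k}}$ is immediate from the invertibility of $M$, and linearity from the linear construction.
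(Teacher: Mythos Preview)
Your argument is correct and shares the paper's two-step structure (solve the five conditions on the $\ii_{k-1}$ side, then verify the $\ii_{k+1}$ side), but each step is handled differently. For existence and uniqueness the paper performs an explicit change of basis to Hermite-like functions $\ibasis_j^{\pm}$ with $\partial_\xi^m \ibasis_j^{\pm}(0)=\delta_j^m$ and then writes the coefficients $d_{i,j}$ in closed form via the tangent $\f t^{(\ii_k)}$ and transversal $\fn^{(\ii_k)}$ vectors; you instead argue directly that the $5\times5$ interpolation matrix in the original B-spline basis is block lower triangular, which is shorter but does not produce the explicit formulae the paper reuses elsewhere (e.g.\ in Definition~\ref{def:edge-space-dual-proj}). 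For compatibility, the paper substitutes its explicit $d_{i,j}$ into the $\ii_{k+1}$-side derivative formulae and simplifies, whereas you invoke the gluing identity \eqref{eq:G1-condition-for-g} and its $\xi$-derivative as universal linear relations---a more conceptual shortcut. One point deserves care: for a general $\varphi\in C^2(\f x^{(i)})$ the gluing identity need not hold along the whole interface, so ``differentiating in $\xi$ before setting $\xi=0$'' is not literally available. The fix is easy: either compute the $\xi$-derivative of the left-hand side directly and use that $\nabla\varphi$ and $H\varphi$ agree from both sides at $\f x^{(i)}$, or (equivalently) replace $\varphi$ by its second-order Taylor polynomial at $\f x^{(i)}$, which is globally smooth and has identical Hermite data there.
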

A proof of this lemma can be found in \ref{appendix:proofs}.

\begin{definition}[Vertex projector]\label{def:vertex-projector}
We define
\begin{equation}
  \label{eq:vertex-interpolant}
  \Pi_{\W_{{\f x}^{(i)}}} = \sum_{k=1}^{2\nu} (-1)^{k+1} \Pi_{\lW{k}} = \Pi_{\lW{1}}-  \Pi_{\lW{2}}
  \pm \ldots + \Pi_{\lW{2\nu-1}} -\Pi_{\lW{2\nu}}.
\end{equation}
With an abuse of notation, we assume   $\Pi_{\W_{{\f x}^{(i)}}}$
returns functions defined on the whole domain $\overline \Omega$,
extending to zero outside the support. 
\end{definition}
\begin{remark}
Since the projectors $\Pi_{\lW{k}}$ onto  $\lW{k}$ are defined by Hermite interpolation at the vertex ${\f x}^{(i)}$, the 
projector $\Pi_{\W_{{\f x}^{(i)}}}$ inherits its properties (such as the support size) from the basis functions 
$\basisfct{\ii_k}{\f j}$, for $k$ even, and $\basisfctSigma{\ii_k}{\f j}$, for $k$ odd.
\end{remark}
The projector $\Pi_{\W_{{\f x}^{(i)}}}$ in Definition \ref{def:vertex-projector} will satisfy the properties required in Lemma \ref{lem:full-vertex-proj-physical-intrpolation}. To complete the proof, we need one more preliminary result.
\begin{lemma}\label{lem:full-vertex-proj-parametric-intrpolation}
Let $k$ be any even integer index and $\varphi \in C^2({\f x}^{(i)})$. 
Then 
\begin{equation}\label{eq:vertex_interpolation_sum_parametric}
  \Du^{m_1} \Dv^{m_2} \, \left(( \Pi_{\W_{{\f x}^{(i)}}} \varphi)\circ \f F^{(\ii_{k})}\right)(0,0) 
  = \Du^{m_1} \Dv^{m_2} \, \left(\varphi \circ \f F^{(\ii_{k})}\right)(0,0)
\end{equation}
for $ m_1, m_2\geq 0 $ and $m_1+m_2\leq 2$. Moreover, if $\varphi\equiv 0$ then  $\Pi_{\W_{{\f x}^{(i)}}} \varphi\equiv 0$.
\end{lemma}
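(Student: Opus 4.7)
The plan is to localize the alternating sum to the patch $\Omega^{(\ii_{2\ell})}$, with $k = 2\ell$, and then verify the identity derivative by derivative. Since each $\Pi_{\lW{k'}}\varphi$ is supported only on the patches incident to the object $\ii_{k'}$, the only terms of the alternating sum that do not vanish identically on $\Omega^{(\ii_{2\ell})}$ are those with $k'\in\{2\ell-1,2\ell,2\ell+1\}$; combined with the signs $(-1)^{k'+1}$ this yields
\begin{equation*}
(\Pi_{\W_{{\f x}^{(i)}}}\varphi)\big|_{\Omega^{(\ii_{2\ell})}} = \Pi_{\lW{2\ell-1}}\varphi - \Pi_{\lW{2\ell}}\varphi + \Pi_{\lW{2\ell+1}}\varphi.
\end{equation*}

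For multi-indices $(m_1,m_2)$ with $m_1,m_2\leq 1$, each of the three projectors reproduces $\Du^{m_1}\Dv^{m_2}(\varphi\circ\f F^{(\ii_{2\ell})})(0,0)$: the middle term by Lemma~\ref{lem:vertex-patch-space}, and the two edge-projector terms by Lemma~\ref{lem:vertex-edge-space}, identifying $\ii_{2\ell}$ with $\ii_{k+1}$ for $k = 2\ell-1$ and with $\ii_{k-1}$ for $k = 2\ell+1$. The signs $+1-1+1$ sum to $1$, so the identity is immediate in this range.

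The heart of the proof lies in the pure second-order derivatives $(2,0)$ and $(0,2)$. Consider $(m_1,m_2) = (2,0)$: only $\Pi_{\lW{2\ell-1}}$ reproduces the derivative directly (via the $m_1\leq 2$, $m_2\leq 1$ case of Lemma~\ref{lem:vertex-edge-space}), and the identity reduces to
\begin{equation*}
\Du^2\bigl((\Pi_{\lW{2\ell}}\varphi)\circ\f F^{(\ii_{2\ell})}\bigr)(0,0) = \Du^2\bigl((\Pi_{\lW{2\ell+1}}\varphi)\circ\f F^{(\ii_{2\ell})}\bigr)(0,0).
\end{equation*}
I would prove this by expansion in the respective bases. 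Because $b_j(0) = b^{\pm}_j(0) = 0$ for $j\geq 1$ and $(b^{+}_j)'(0) = 0$ for $j\geq 2$, the left-hand side depends only on the coefficients of $\Pi_{\lW{2\ell}}\varphi$ indexed by $(0,0)$ and $(1,0)$, while the right-hand side depends only on the coefficients of $\Pi_{\lW{2\ell+1}}\varphi$ indexed by $(0,0)$, $(1,0)$ and $(0,1)$. Solving the Hermite interpolation systems of Lemmas~\ref{lem:vertex-patch-space} and \ref{lem:vertex-edge-space} expresses these coefficients as explicit linear combinations of the patch data $\Du^{m_1}\Dv^{m_2}(\varphi\circ\f F^{(\ii_{2\ell})})(0,0)$ for $m_1,m_2\leq 1$, and the normalization $\ibasis_0 = b_0+b_1$, $\ibasis_1 = (h/p)\,b_1$ chosen in Definition~\ref{def:edge-space} is precisely what forces the two resulting expressions to coincide. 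The derivative $(0,2)$ is then handled by the symmetric argument, with $\Pi_{\lW{2\ell+1}}$ now reproducing and $\Pi_{\lW{2\ell-1}}$, $\Pi_{\lW{2\ell}}$ cancelling.

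The second statement, $\Pi_{\W_{{\f x}^{(i)}}}0 \equiv 0$, is immediate from the linearity of each $\Pi_{\lW{k}}$. The main obstacle in the whole argument is the $(2,0)$ and $(0,2)$ cancellation: once one identifies which basis coefficients survive at the origin, it reduces to a short algebraic verification that exploits the specific scaling of $\ibasis_0, \ibasis_1$ together with the $G^1$ gluing data $\alpha$ and $\beta$ entering \eqref{eq:bj-edge-trace}--\eqref{eq:bj-edge-derivative}.
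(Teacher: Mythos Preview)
Your proposal is correct and follows the same skeleton as the paper's proof: restrict the alternating sum to the three surviving contributions on the patch $\Omega^{(\ii_{2\ell})}$, dispatch the cases $m_1,m_2\le 1$ directly from Lemmas~\ref{lem:vertex-patch-space} and~\ref{lem:vertex-edge-space}, and handle $(2,0)$ and $(0,2)$ separately. For the pure second derivatives the paper asserts that the two non-reproducing contributions are each \emph{zero}, whereas you (more cautiously, and in fact more accurately) only claim they are \emph{equal} and hence cancel; indeed for $p\ge 3$ one computes that both $\Du^2\bigl((\Pi_{\lW{2\ell}}\varphi)\circ\f F^{(\ii_{2\ell})}\bigr)(0,0)$ and $\Du^2\bigl((\Pi_{\lW{2\ell+1}}\varphi)\circ\f F^{(\ii_{2\ell})}\bigr)(0,0)$ equal the common nonzero value $\ibasis_0''(0)\,f^{(\ii_{2\ell})}(0,0)+\frac{h}{p}\,b_1''(0)\,\Du f^{(\ii_{2\ell})}(0,0)$, so your cancellation argument is exactly what is needed.
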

This lemma states that the projector interpolates up to second order derivatives, when mapped into the parameter domain. 
A proof of this lemma can be found in \ref{appendix:proofs}.

\begin{proof}[Proof of Lemma \ref{lem:full-vertex-proj-physical-intrpolation}]
Given $\varphi \in  C^2({\f x}^{(i)}) $, since $\Pi_{\W_{{\f
        x}^{(i)}}} \varphi$ is supported in the region
  $\overline\Omega^{(\ii_2)} \cup \ldots \cup \overline
  \Omega^{(\ii_{2\nu})}$, we need to consider the interfaces therein, 
 that are $ \overline\Sigma^{(\ii_{1} )}, \ldots ,  \overline\Sigma^{(\ii_{2\nu-1} )}$ for an
 interior vertex or  $ \overline\Sigma^{(\ii_{3} )}, \ldots ,
 \overline\Sigma^{(\ii_{2\nu-3} )}$ for a boundary vertex.
Then for an even index $k$, consider 
 a generic  $ \overline\Sigma^{(\ii_{k+1} )}$, with adjacent patches
 $\Omega^{(\ii_{k})}$ and $\Omega^{(\ii_{k+2})}$. We have 
 \begin{equation}
   \label{eq:vertex-projector-around-an-edge}
   \begin{aligned}
  \Pi_{\W_{{\f x}^{(i)}}} \varphi & = \Pi_{\lW{{k+1}}}
  \varphi -  \Pi_{\lW{{k}}} \varphi+ \Pi_{\lW{{k-1}}}
  \varphi,& \quad &\text{ on the patch }  \overline \Omega^{(\ii_{k})}, \\  \Pi_{\W_{{\f x}^{(i)}}} \varphi & = \Pi_{\lW{{k+3}}}
  \varphi -  \Pi_{\lW{{k+2}}} \varphi+ \Pi_{\lW{{k+1}}}
  \varphi, &\quad &\text{ on the patch }  \overline \Omega^{(\ii_{k+2})} .
   \end{aligned}
 \end{equation}

The term $-  \Pi_{\lW{{k}}} \varphi+ \Pi_{\lW{{k-1}}}\varphi$ has
vanishing trace and gradient at $\overline\Sigma^{(\ii_{k+1} )}$. Indeed,
adopting again the abbreviated notation as in the proof of Lemma \ref{lem:full-vertex-proj-parametric-intrpolation}, we have
\begin{equation}\label{eq:difference}
  (\Pi_{\lW{{k-1}}}\varphi)\circ \f F^{(\ii_{k})} - ( \Pi_{\lW{{k}}} \varphi)\circ \f F^{(\ii_{k})} = f^{(\ii_{k-1}, \ii_{k})}_h
      -f^{(\ii_{k}, \ii_{k})}_h 
\end{equation}
and, by plugging \eqref{eq: f^{(i_{k},i_{k})}} and  \eqref{eq:
  f^{(i_{k-1},i_{k})}} into the right hand side of \eqref{eq:difference}, it can be seen directly that the function and  its gradient  vanish at $(0, \xi)$, for all
$\xi \in [0,1]$. 
Similarly, $\Pi_{\lW{{k+3}}}   \varphi -  \Pi_{\lW{{k+2}}} \varphi$ has
vanishing trace and gradient at $\overline\Sigma^{(\ii_{k+1}  )}$. The only
remaining term in \eqref{eq:vertex-projector-around-an-edge} is $\Pi_{\lW{{k+1}}}
  \varphi$, which  is $C^1(\overline\Sigma^{(\ii_{k+1} )})$ by  Lemma~\ref{lemma:edge-fun-1}. 

The interpolation property  \eqref{eq:vertex_interpolation_sum}
follows  from
\eqref{eq:vertex_interpolation_sum_parametric}. Then, the dimension of
the image of $\Pi_{\W_{{\f   x}^{(i)}}} $ follows from Lemma \ref{lem:full-vertex-proj-parametric-intrpolation}.
\end{proof}

A dual vertex  basis is straightforwardly  derived by interpolation of derivatives up to second order.
\begin{definition}\label{def:dual-basis-vertex}
 The set $\{\basisfctXdual^{(i)}_{\f j}\}_{\f j \in \mathbb{I}_{{\f
       x}^{(i)}}}$, with  $$  \basisfctXdual^{(i)}_{\f j}(\varphi) = \frac{
 (\partial_{x_1}^{j_1}\partial_{x_2}^{j_2}\; \varphi )({\f
   x}^{(i)})}{ \sigma ^{j_1+j_2}}, $$  is  a dual basis for 
$\{\basisfctX{i}{\f j}\}_{\f j \in \mathbb{I}_{{\f x}^{(i)}}}$. 
\end{definition}
We have by definition $\Pi_{\W_{{\f x}^{(i)}}}  (\varphi) =
\sum_{\f j\in  {\mathbb{I}_{{\f x}^{(i)}}}} \basisfctXdual^{(i)}_{\f j}(\varphi)
\basisfctX{i}{\f j} $.

\subsection{Basis, dual basis and projector for the Argyris isogeometric space $\W$}

To summarize the results of this section, the functions 
\begin{itemize}
 \item $\{\basisfct{i}{\f j}\}_{\f j\in
  {\mathbb{I}^{\circ}_{\Omega^{(i)}}}}$, for $i\in\indexOmega$ (as in Definition \ref{lem:patch-space}),
 \item $\{\basisfctSigma{i}{\f j}\}_{\f j \in {\mathbb{I}^{\circ}_{\Sigma^{(i)}}}}$, for $i\in\indexSigma$ (as in Definitions \ref{def:edge-space} and \ref{def:boundary-edge-space}), and 
 \item $\{\basisfctX{i}{\f j}\}_{\f j \in \mathbb{I}_{{\f x}^{(i)}}}$, for $i\in\indexX$ (as in Definition \ref{def:vertex-basis}), 
\end{itemize}
form a basis for the space $\W$. 
The global projector 
\begin{displaymath}
\Pi_{\W} : C^2(\Omega) \rightarrow \W
\end{displaymath}
is defined via
\begin{displaymath}
  \Pi_{\W}(\varphi) = \sum_{i\in\indexOmega} \Pi_{\W^{\circ}_{\Omega^{(i)}}}(\varphi) + \sum_{i\in\indexSigma} \Pi_{\W^{\circ}_{\Sigma^{(i)}}}(\varphi) + \sum_{i\in\indexX} \Pi_{\W_{{\f x}^{(i)}}}(\varphi),
\end{displaymath}
where 
\begin{displaymath}
  \Pi_{\W^{\circ}_{\Omega^{(i)}}} (\varphi )= \sum_{\f j\in
  {\mathbb{I}^{\circ}_{\Omega^{(i)}}}} \Lambda_{\f j}(\varphi ) \basisfct{i}{\f j}
\end{displaymath}
as in Definition \ref{def:patch-dual-basis},
\begin{displaymath}
  \Pi_{\W^{\circ}_{\Sigma^{(i)}}} ( \varphi) = \sum_{\f j\in {\mathbb{I}^{\circ}_{\Sigma^{(i)}}}} \overline{\Lambda}^{(i)}_{\f j}(\varphi) \basisfctSigma{i}{\f j}
\end{displaymath}
as in Definition \ref{def:edge-space-dual-proj}, and 
\begin{displaymath}
\Pi_{\W_{{\f x}^{(i)}}}  (\varphi) =
\sum_{\f j\in  {\mathbb{I}_{{\f x}^{(i)}}}} \basisfctXdual^{(i)}_{\f j}(\varphi)
\basisfctX{i}{\f j} 
\end{displaymath}
as in Lemma \ref{lem:full-vertex-proj-physical-intrpolation} and Definition \ref{def:dual-basis-vertex}.

The following result follows directly from the definition of the space $\W$.
\begin{proposition}\label{prop:smoothness-of-W}
We have for $\varphi_h \in \W$, that $\varphi_h \in C^1(\Omega)$ and
$\varphi_h \in C^2({\f x}^{(i)})$ for all $i\in\indexX$.
\end{proposition}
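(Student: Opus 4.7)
The plan is to verify both claims by exploiting the direct-sum decomposition \eqref{eq:W} together with the properties of the three families of basis functions already established.

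First, I would establish global $C^1(\Omega)$ regularity by noting that each basis function of $\W$ already lies in $\V^1$. The patch-interior basis functions $\basisfct{i}{\f j}$ are $C^1$ upon extension by zero because $b_{\f j}$, for $\f j\in\mathbb{I}^{\circ}_{\Omega^{(i)}}=\{2,\ldots,N-3\}^2$, vanishes together with its first partial derivatives on $\partial([0,1]^2)$; the edge-interior basis functions lie in $\V^1$ directly by Proposition~\ref{lemma:edge-fun-2}; the vertex basis functions lie in $\V^1$ by Lemma~\ref{lem:full-vertex-proj-physical-intrpolation}. Any linear combination preserves the inclusion, so $\W\subset\V^1\subset C^1(\Omega)$.

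Second, for pointwise $C^2$ regularity at a fixed vertex ${\f x}^{(i)}$, I would decompose a generic $\varphi_h\in\W$ into its patch-interior, edge-interior, and vertex summands and examine which of these can contribute to the physical Hessian at ${\f x}^{(i)}$. All summands whose support does not meet a neighborhood of ${\f x}^{(i)}$ contribute nothing, which rules out patch-interior functions from non-incident patches, edge-interior functions from non-incident edges, and vertex functions at other vertices. The remaining patch-interior summands vanish to second order at ${\f x}^{(i)}$: since $b_{j_1}(0)=b_{j_1}'(0)=0$ for $j_1\geq 2$ (and analogously at $\xi_1=1$), each $b_{\f j}$ and $\nabla b_{\f j}$ vanish along both edges of $[0,1]^2$ meeting at the parametric pre-image of the vertex; tangential differentiation along these edges then forces all second partial derivatives $\Du^{m_1}\Dv^{m_2} b_{\f j}$, $m_1+m_2=2$, to vanish at that parametric corner, and the chain rule yields a vanishing physical Hessian. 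The remaining edge-interior summands vanish to second order at ${\f x}^{(i)}$ by the very characterization used in the proof of Proposition~\ref{lemma:edge-fun-2} (the endpoint condition $j_1+j_2\geq 3$ in $\mathbb{I}^{\circ}_{\Sigma^{(i')}}$ is exactly what cuts out functions with null value, gradient, and Hessian at the edge endpoints). Finally, the vertex basis functions $\basisfctX{i}{\f j}$ lie in $\V^1$ and, by the interpolation property \eqref{eq:vertex_interpolation_sum}, have well-defined physical second derivatives at ${\f x}^{(i)}$ that agree from every incident patch.

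Combining these observations, the Hessian of $\varphi_h$ at ${\f x}^{(i)}$ is single-valued across all patches incident to the vertex; together with the already-established global $C^1$ regularity, this delivers $\varphi_h\in C^2({\f x}^{(i)})$. The main technical point, and the only step that I would write out in detail, is the vanishing of all second-order parametric derivatives of the patch-interior $b_{\f j}$ at the parametric vertex, which is precisely where the index restriction $\f j\in\{2,\ldots,N-3\}^2$ (equivalently, Assumption~\ref{ass:minimal-mesh-within-patches} together with $p\geq r+2$) is used.
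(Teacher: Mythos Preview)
Your proposal is correct and follows the same route the paper has in mind. The paper does not write out a proof at all, stating only that the result ``follows directly from the definition of the space $\W$''; your argument supplies exactly the details that are implicit there, invoking Proposition~\ref{lemma:edge-fun-2} and Lemma~\ref{lem:full-vertex-proj-physical-intrpolation} for the $C^1$ part and the index restrictions in $\mathbb{I}^{\circ}_{\Omega^{(i)}}$ and $\mathbb{I}^{\circ}_{\Sigma^{(i)}}$ (together with the interpolation property~\eqref{eq:vertex_interpolation_sum}) for the $C^2$ part at the vertices.

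One very minor remark: your parenthetical ``(equivalently, Assumption~\ref{ass:minimal-mesh-within-patches} together with $p\geq r+2$)'' slightly misattributes what is being used. The vanishing of the second-order parametric derivatives of $b_{\f j}$ at a corner relies purely on $j_1,j_2\geq 2$ (resp.\ $\leq N-3$) and the open-knot-vector structure; Assumption~\ref{ass:minimal-mesh-within-patches} is there to guarantee that the various index sets are nonempty, not to force the derivatives to vanish.
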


\section{Numerical examples}\label{sec:tests}

We perform $L^2$-approximation over two AS-$G^1$ multi-patch parametrizations to numerically show that the Argyris isogeometric space $\W$ maintains the polynomial reproduction 
properties of the entire space~$\mathcal{V}^1$ for the traces and normal derivatives along the interfaces, and that the space~$\W$, being a subspace of $\mathcal{V}^1$, produces relative $L^2$ errors of the same magnitude 
as the entire space $\mathcal{V}^1$. 

For this purpose, we consider the two AS-$G^1$ multi-patch parametrizations visualized in Fig.~\ref{fig:AS_geometries} (left). Both AS-$G^1$ geometries consist of single spline 
patches~$\f F^{(i)} \in \Spr \times \Spr$ with $\f p=(3,3)$, $\f r=(1,1)$ and $h=\frac{1}{2}$, and are generated by using the method presented in~\cite{KaSaTa17b}. The construction 
of the AS-$G^1$ five patch parametrization was already demonstrated in~\cite[Example 1]{KaSaTa17b}. The AS-$G^1$ three-patch parametrization can be obtained in an analogous manner. 

For both AS-$G^1$ geometries we generate a sequence of nested spaces $\W_h$ and $\mathcal{V}_h^1$ for $\f p=(3,3)$ and $\f r=(1,1)$ by selecting the mesh size $h$ as $\frac{1}{4}$, 
$\frac{1}{8}$, $\frac{1}{16}$ and $\frac{1}{32}$. While the bases of the Argyris spaces~$\W_h$ are simply constructed as described in Section~\ref{sec:W}, the bases of the entire 
spaces~$\mathcal{V}^1_h$ are obtained in the same way as in \cite[Section 4.2]{KaSaTa17b} by means of the concept of minimal determining sets (cf. \cite{LaSch07}). Note that in contrast 
to the basis functions of $\W_h$, the resulting basis functions of $\mathcal{V}^1_h$ are in general not locally supported and possess a support over at least one entire interface.

We use now the basis functions for the spaces~$\W_h$ and $\mathcal{V}^1_h$ to perform $L^2$-approximation over the two AS-$G^1$ multi-patch parametrizations. Consider one 
of the spaces~$\W_h$ or $\mathcal{V}^1_h$, and let $\phi_{\f j}$ be the corresponding basis functions. The goal is to approximate the function
\begin{equation} \label{eq:exact_solution}
 z: \Omega \rightarrow \RR , \quad z(\f x) = z(x_1,x_2) = 2 \cos(x_1) \sin (x_2),
\end{equation}
see Fig.~\ref{fig:AS_geometries} (middle), by the function 
\[
 u_h (\f{x}) = \sum_{\f j} c_{\f j} \phi_{\f j}(\f{x}) , \quad c_{\f j} \in \RR,
\]
via minimizing the term 
\[
\|u_h-z\|_{L^2}^{2} = \int_{\Omega} (u_{h}(\f{x})- z(\f{x}) )^{2} \mathrm{d}\f{x} 
\rightarrow  \min_{c_{\f j}}.
\]
The isogeometric formulation of this linear problem was discussed in detail  in \cite[Section 4.2]{KaSaTa17b}, and will be omitted here for the sake of brevity.

\begin{figure}
\centering\footnotesize 
\begin{tabular}{ccc}
AS-$G^1$ geometry & Exact solution & Relative $L^2$-error \\  
\includegraphics[width=5.0cm,clip]{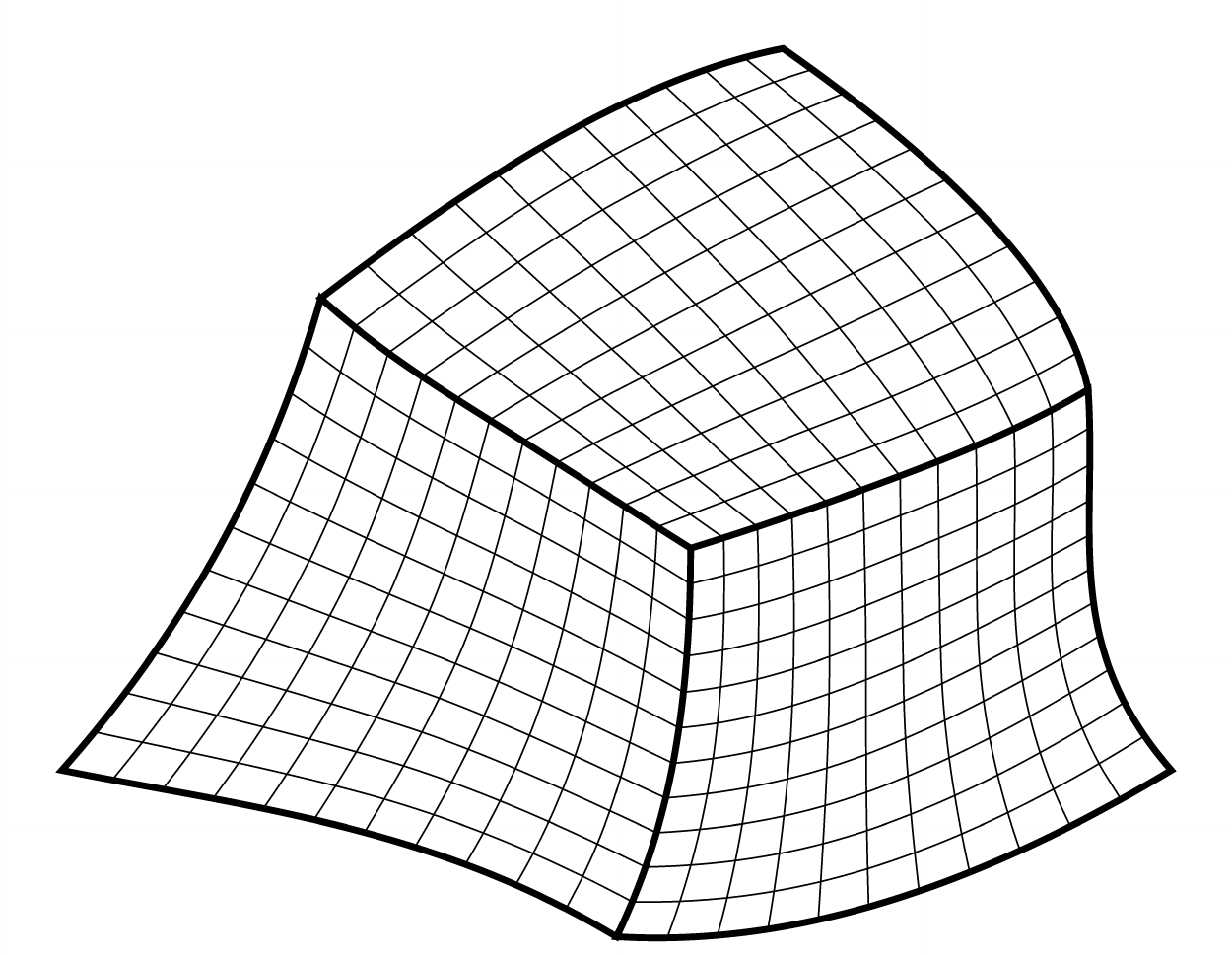} &
\includegraphics[width=4.7cm,clip]{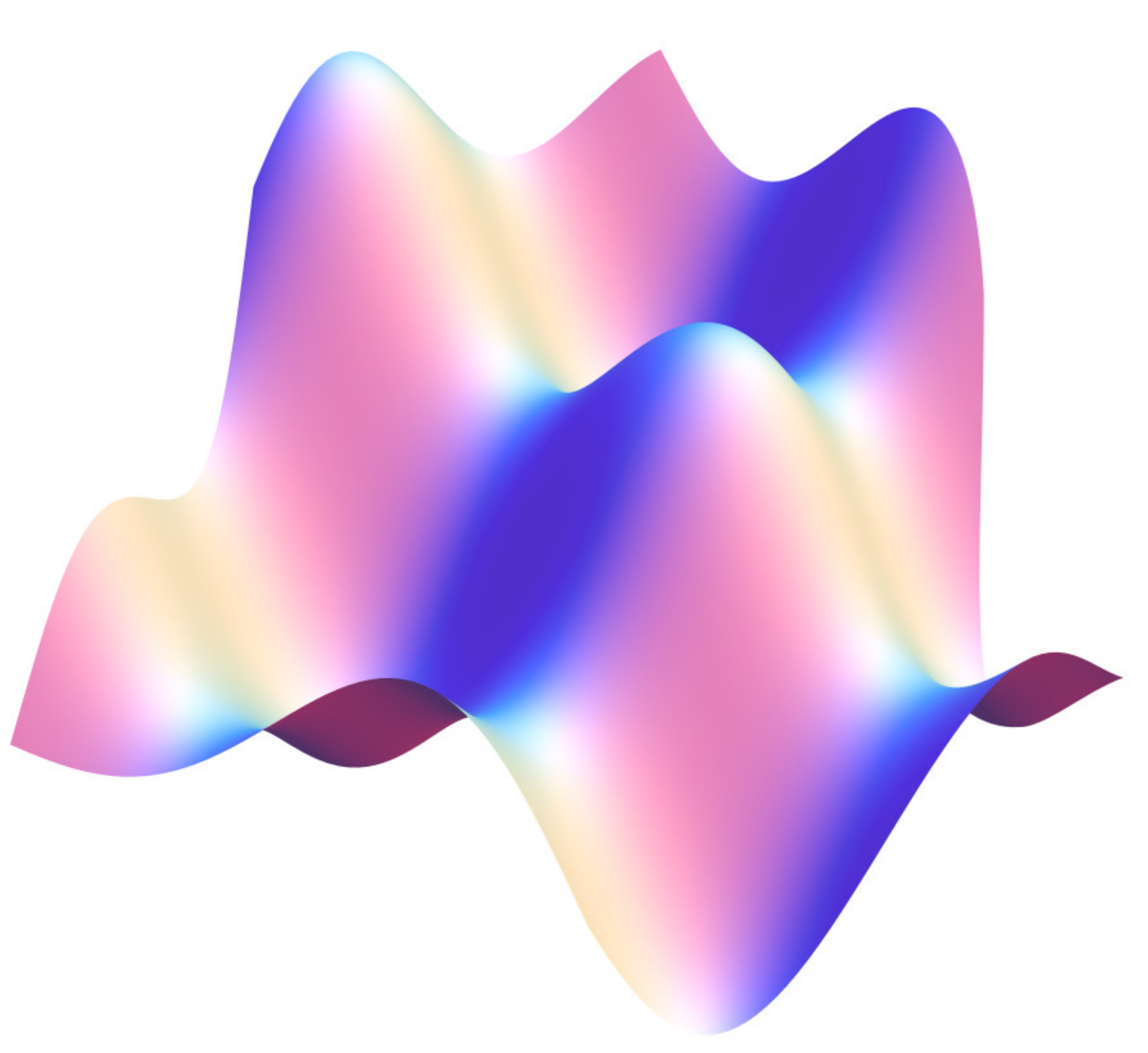} &
\includegraphics[width=5.5cm,clip]{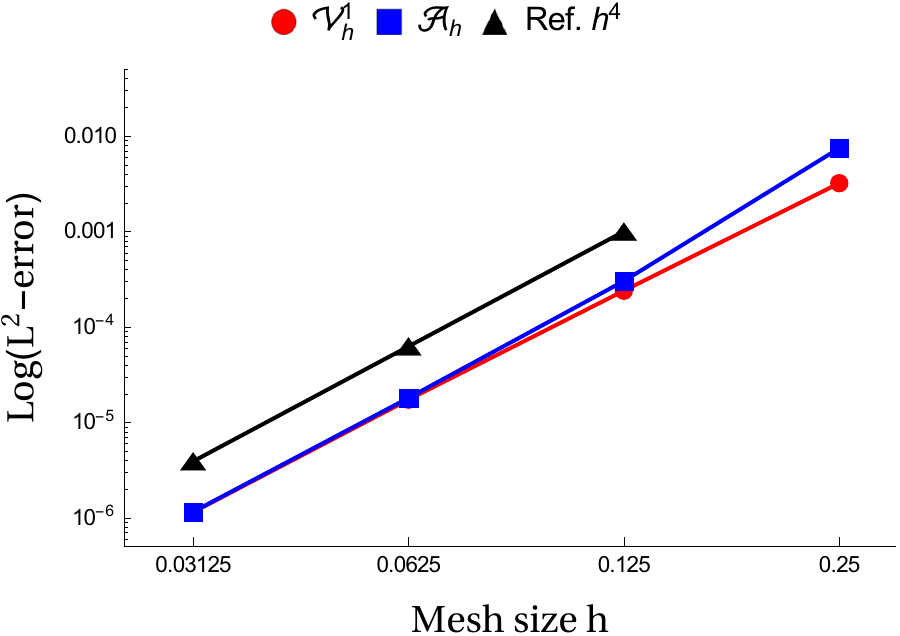} \\
\multicolumn{3}{c}{Example: AS-$G^1$ three-patch geometry} \\
\includegraphics[width=5.0cm,clip]{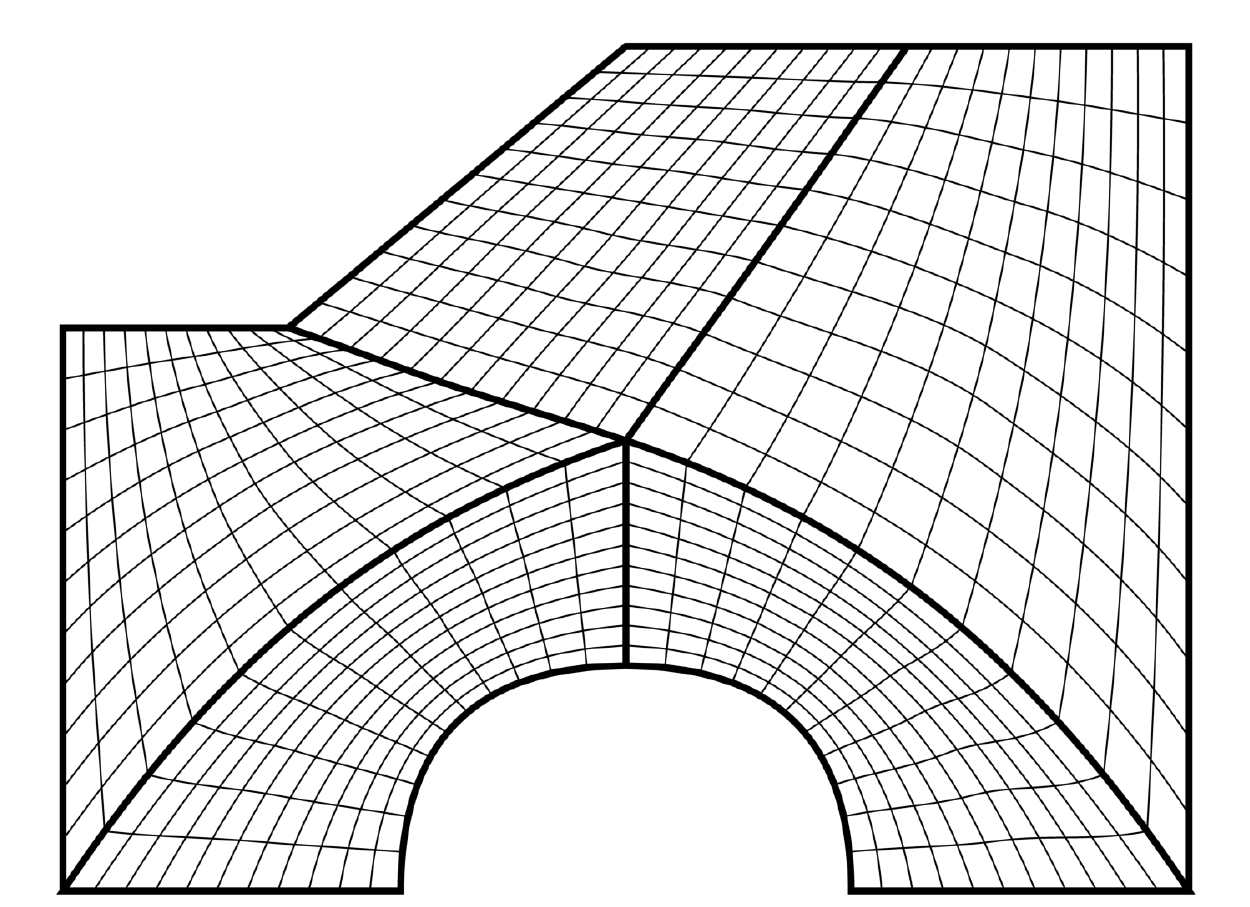} &
\includegraphics[width=4.7cm,clip]{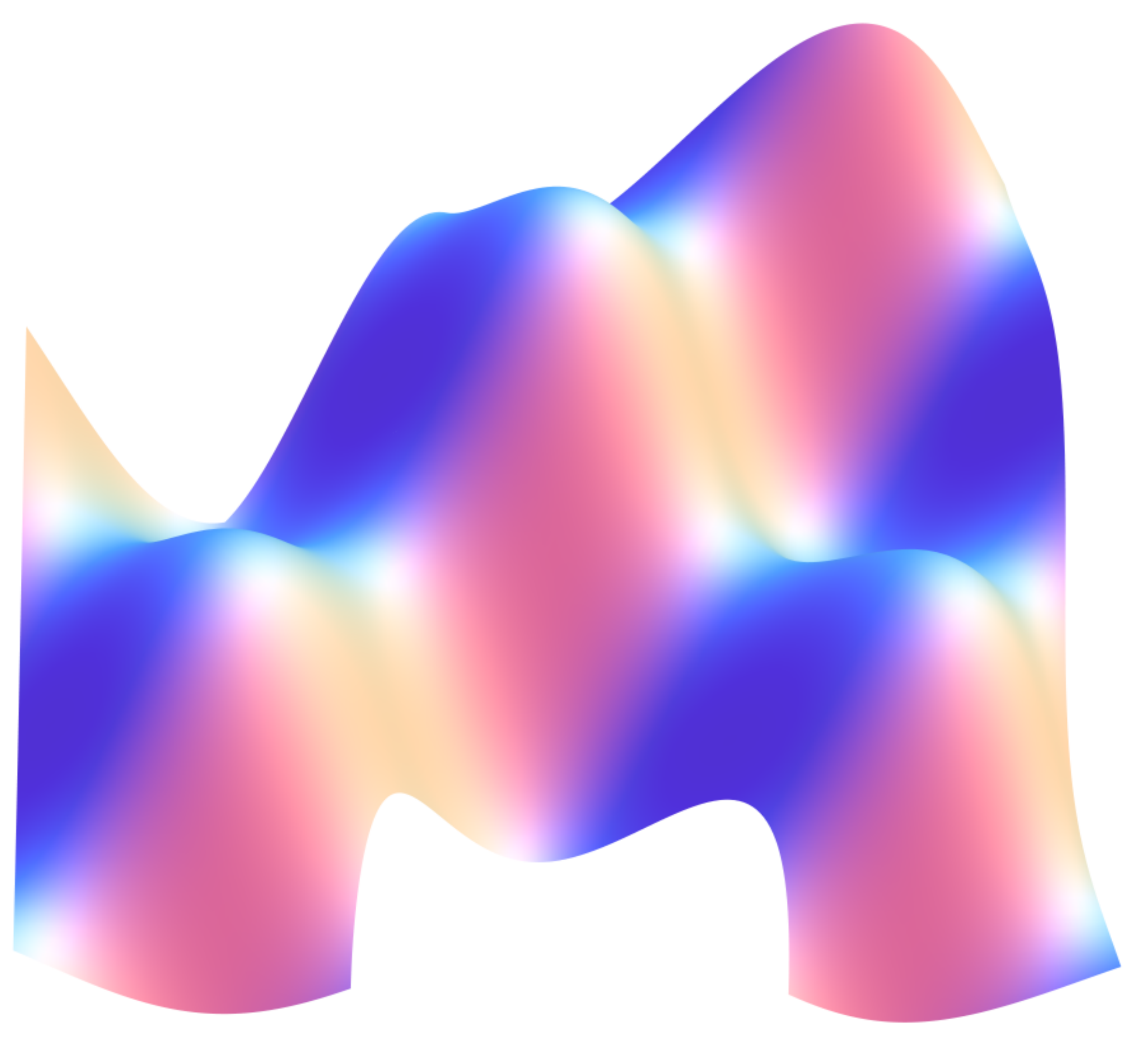} &
\includegraphics[width=5.5cm,clip]{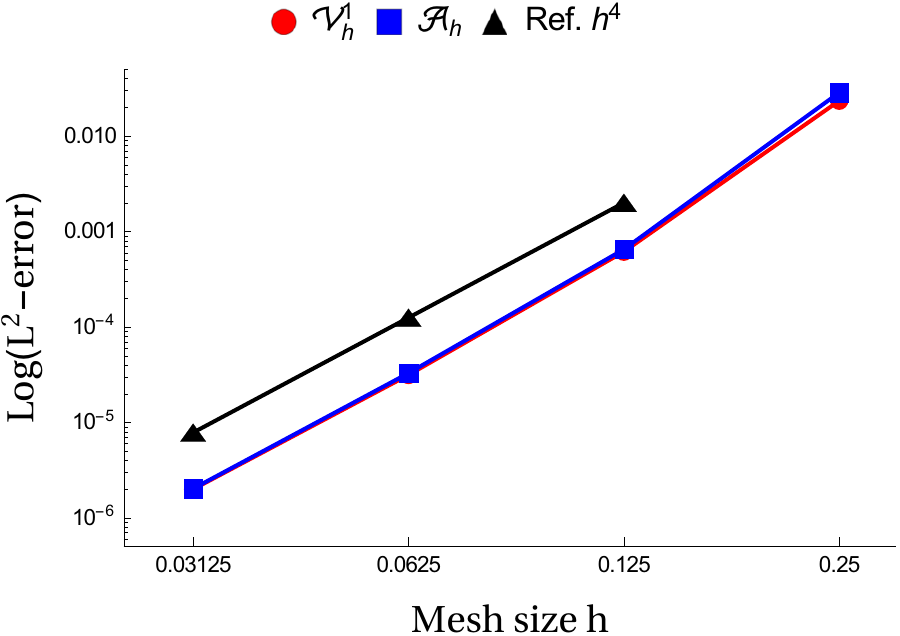} \\
\multicolumn{3}{c}{Example: AS-$G^1$ five-patch geometry}
\end{tabular}
\caption{$L^2$-projection over two AS-$G^1$ multi-patch parametrizations (left) by using the two different spaces~$\W_h$ and $\mathcal{V}^1_h$ (right) to approximate the 
function~\eqref{eq:exact_solution} (middle), cf. Table~\ref{tab:L2}.}
\label{fig:AS_geometries}
\end{figure}

Table~\ref{tab:L2} and Fig.~\ref{fig:AS_geometries}~(right) report the resulting relative $L^2$-errors and the estimated convergence rates for the two spaces~$\W_h$ and $\mathcal{V}^1_h$ for 
the different mesh sizes~$h$. The numerical results indicate for both spaces convergence rates of optimal order~$\mathcal{O}(h^4)$ in the $L^2$-norm and show that resulting relative 
$L^2$-errors are of the same magnitude for the two spaces. 

\begin{table}
  \centering\scriptsize \advance\tabcolsep by -4pt 
  \begin{tabular}{|c||c|c|c||c|c|c|} \hline
  & \multicolumn{3}{|c||}{Subspace $\W$} & \multicolumn{3}{|c|}{Entire space~$\mathcal{V}^1$} \\ \hline \hline
  & \multicolumn{6}{|c|}{AS-$G^1$ three-patch geometry (a)} \\ \hline
  $h$  & $\s \dim \W$ & $\s \frac{||u_{h}-z||_{L^{2}}}{||z||_{L^{2}}}$ & e.c.r. $\s  ||\cdot||_{L^{2}}$ 
   & $\s \dim \mathcal{V}^{1}$ & $\s \frac{||u_{h}-z||_{L^{2}}}{||z||_{L^{2}}}$ & e.c.r. $\s  ||\cdot||_{L^{2}}$ \\ \hline \hline
  1/4  & 177   & 7.46e-03 & -    & 222   & 3.21e-03 & -       \\ \hline 
  1/8  & 729   & 3.03e-04 & 4.62 & 822   & 2.4e-04  & 3.74    \\ \hline
  1/16 & 2985  & 1.8e-05  & 4.07 & 3174  & 1.73e-05 & 3.79    \\ \hline
  1/32 & 12105 & 1.15e-06 & 3.97 & 12486 & 1.14e-06 & 3.92    \\ \hline \hline
   & \multicolumn{6}{|c|}{AS-$G^1$ five-patch geometry (b)} \\ \hline
  $h$  & $\s \dim \W$ & $\s \frac{||u_{h}-z||_{L^{2}}}{||z||_{L^{2}}}$ & e.c.r. $\s  ||\cdot||_{L^{2}}$ 
  & $\s \dim \mathcal{V}^{1}$ & $\s \frac{||u_{h}-z||_{L^{2}}}{||z||_{L^{2}}}$ & e.c.r. $\s  ||\cdot||_{L^{2}}$ \\ \hline \hline
  1/4  & 291   & 2.86e-02 & -    & 372   & 2.35e-02 & -      \\ \hline 
  1/8  & 1211  & 6.5e-04  & 5.46 & 1376  & 6.14e-04 & 5.25   \\ \hline
  1/16 & 4971  & 3.28e-05 & 4.31 & 5304  & 3.14e-05 & 4.29   \\ \hline
  1/32 & 20171 & 2.02e-06 & 4.02 & 20840 & 1.99e-06 & 3.98  \\ \hline
  \end{tabular}
  \caption{Resulting relative $L^2$-errors with estimated convergence rates of the diagonally scaled mass matrices by 
  performing $L^2$-approximation over two AS-$G^1$ multi-patch geometries using the two $C^1$ spaces $\W$ and $\mathcal{V}^1$, cf. Fig.~\ref{fig:AS_geometries}.}
  \label{tab:L2}
 \end{table}

\section{Conclusion} \label{sec:conclusion}

We presented for the class of AS-$G^1$ multi-patch parametrizations the construction of a basis and of an associated dual basis for the so-called Argyris isogeometric space $\W$, which generalizes the classical Argyris finite elements to multi-patch isogeometric spaces. It is 
a subspace of the entire $C^1$  isogeometric space~$\mathcal{V}^{1}$ maintaining the polynomial reproduction properties of~$\mathcal{V}^{1}$ for the traces and normal 
derivatives along the interfaces. This property of the subspace~$\W$ was shown numerically by performing $L^2$-approximation over different AS-$G^1$ multi-patch parametrizations.
The use of the Argyris space~$\W$ instead of the space~$\mathcal{V}^{1}$ is advantageous since the subspace~$\W$ has a simpler structure and allows a uniform and simple 
construction of the basis functions independent of the AS-$G^1$ domain parametrization. The construction of the basis (and of its dual basis) is based on the decomposition of the 
space~$\W$ into the direct sum of three subspaces called the patch-interior, the edge and the vertex function space. The resulting basis and the dual basis have a simple 
form, since the single functions are locally supported and are explicitly given by closed form representations. 

This paper presents the foundation for further studies of  $C^1$ isogeometric spaces over AS-$G^1$ multi-patch
parametrizations, by providing a basis and corresponding projectors.
A first planned topic for future research is the theoretical investigation of the properties of the 
space~$\W$, such as approximation error and stability estimates for $h$-refined meshes, which can be built upon a suitable dual basis. Moreover, one may also construct a basis forming a partition of unity, following the ideas presented in \cite{dierckx1997calculating}, based on local triangular B\'ezier surfaces at the vertices.
We are also planning to extend the construction to surface domains and to use our approach to perform Kirchhoff-Love shell analysis for 
different linear and non-linear model configurations. Another challenging task will be the extension to volumetric domains. So far, no 
generalization of AS-$G^1$ parametrizations to volumetric domains is known.

\section*{Acknowledgments}
G. Sangalli is member of the  Gruppo Nazionale Calcolo
Scientifico-Istituto Nazionale di Alta Matematica (GNCS-INDAM), and
was partially supported by the European Research Council 
through the FP7 Ideas Consolidator Grant \emph{HIGEOM} n.616563. This support is
gratefully acknowledged. 

\appendix
\section{Proof of Lemma \ref{lem:vertex-edge-space} and \ref{lem:full-vertex-proj-parametric-intrpolation}}\label{appendix:proofs}

\begin{proof}[Proof of Lemma \ref{lem:vertex-edge-space}]
Let $\phi_{m_1,m_2} = \partial_{x_1}^{m_1} \partial_{x_2}^{m_2}\varphi({\f x}^{(i)})$. 
Assume $\ii_{k-1} , \ii_{k+1}\in \indexOmega$ (this is not true in general
for boundary vertices, if so the proof below simplifies in a trivial
way). On each patch $\Omega^{(\ii_\ell)}$, $\ell = k-1$ or $\ell=k+1$, we can
write the pull-back of $\varphi$ and of the projection
$\Pi_{\lW{k}}\varphi$ as 
\begin{equation*}
	f^{(\ii_\ell)}(\xi_1,\xi_2) = \varphi \circ \f F^{(\ii_\ell)}(\xi_1,\xi_2)
\end{equation*}
and
\begin{equation*}
	f^{(\ii_k,\ii_\ell)}_h(\xi_1,\xi_2) = (\Pi_{\lW{k}}\varphi) \circ \f F^{(\ii_\ell)}(\xi_1,\xi_2).
\end{equation*}
We have by definition, using the abbreviations $\nabla \phi = (\phi_{1,0},\phi_{0,1})$ and 
\begin{equation*}
H\phi = \left(
\begin{array}{ll}
  \phi_{2,0} & \phi_{1,1} \\
  \phi_{1,1} & \phi_{0,2} 
\end{array}\right),
\end{equation*}
and using the chain rule of differentiation, that
\begin{equation}\label{eq:fphiF}
\begin{array}{lll}
  f^{(\ii_\ell)}(0,0) &=& \phi_{0,0}, \\
  \Du f^{(\ii_\ell)}(0,0)  &=& \nabla \phi \Du \f F^{(\ii_\ell)}(0,0),\\
  \Dv f^{(\ii_\ell)}(0,0)  &=& \nabla \phi \Dv \f F^{(\ii_\ell)}(0,0),\\
  \Du^2 f^{(\ii_\ell)}(0,0) 
  &=& (\Du \f F^{(\ii_\ell)} (0,0))^T 
  \; H\phi \; \Du \f F^{(\ii_\ell)} (0,0)
  + \nabla\phi \; \Du \Du \f F^{(\ii_\ell)}(0,0),\\
  \Du\Dv f^{(\ii_\ell)}(0,0) 
  &=& (\Du \f F^{(\ii_\ell)}(0,0))^T 
  \; H\phi \; \Dv \f F^{(\ii_\ell)} (0,0)
  + \nabla\phi \; \Du \Dv \f F^{(\ii_\ell)}(0,0),\\
  \Dv^2 f^{(\ii_\ell)}(0,0) 
  &=& (\Dv \f F^{(\ii_\ell)}(0,0))^T 
  \; H\phi \; \Dv \f F^{(\ii_\ell)} (0,0) 
  + \nabla\phi \; \Dv \Dv \f F^{(\ii_\ell)}(0,0).
\end{array}
\end{equation}
Consider the basis transformations from $\{b_0^+,b_1^+,b_2^+\}$ to $\{\ibasis_0^+,\ibasis_1^+,\ibasis_2^+\}$ and 
from $\{b_0^-,b_1^-\}$ to $\{\ibasis_0^-,\ibasis_1^-\}$,
with
\begin{equation*}
  \partial^j_{\xi}\ibasis_i^+(0)=\delta_i^j \quad\mbox{for }j=0,\ldots,2,\quad\mbox{and}\quad   \partial^j_{\xi} \ibasis_i^-(0)=\delta_i^j \quad\mbox{for }j=0,1,
\end{equation*}
where $\delta_i^j$ is the Kronecker delta. 
Then, recalling \eqref{eq:bj-edge-trace} and \eqref{eq:bj-edge-derivative}, we can rewrite the functions $ f^{(\ii_k,\ii_{k-1})}_h$ and $
f^{(\ii_k,\ii_{k+1})}_h$ in terms of the new bases:
\begin{equation}\label{eq:change-basis-k-1}
\begin{array}{lll}
  f^{(\ii_k,\ii_{k-1})}_h(\xi_1,\xi_2) &=& \sum_{j=0}^2 d_{0,j}\, \left(\ibasis_j^+(\xi_2)\ibasis_0(\xi_1)-\beta^{(\ii_k,\ii_{k-1})}(\xi_2) (\ibasis^+_{j})'(\xi_2) \ibasis_1(\xi_1) \right)\\
  &+& \sum_{j=0}^1 d_{1,j}\, \alpha^{(\ii_k,\ii_{k-1})}(\xi_2)
      \ibasis^-_{j}(\xi_2) b_1(\xi_1),
\end{array}
\end{equation}
and 
\begin{equation}\label{eq:change-basis-k+1}
\begin{array}{lll}
  f^{(\ii_k,\ii_{k+1})}_h(\xi_1,\xi_2) &=& \sum_{j=0}^2 d_{0,j}\, \left(\ibasis_j^+(\xi_1)\ibasis_0(\xi_2)-\beta^{(\ii_k,\ii_{k+1})}(\xi_1) (\ibasis^+_{{j}})'(\xi_1) \ibasis_1(\xi_2) \right)\\
  &-& \sum_{j=0}^1 d_{1,j}\, \alpha^{(\ii_k,\ii_{k+1})}(\xi_1)
      \ibasis^-_{{j}}(\xi_1) b_1(\xi_2).
\end{array}
\end{equation}
Considering \eqref{eq:change-basis-k-1}, we then have
\begin{equation}\label{eq:derivatives_for_f}
\begin{array}{lll}
  f^{(\ii_k,\ii_{k-1})}_h(0,0) &=& d_{0,0}, \\
  \Dv f^{(\ii_k,\ii_{k-1})}_h(0,0)  &=& d_{0,1}, \\
  \Dv^2 f^{(\ii_k,\ii_{k-1})}_h(0,0) &=& d_{0,2}, \\
  \Du f^{(\ii_k,\ii_{k-1})}_h(0,0)  &=&  \frac{p}{h} \alpha^{(\ii_k,\ii_{k-1})}(0)\, d_{1,0} - \beta^{(\ii_k,\ii_{k-1})}(0) \, d_{0,1}, \\
  \Du\Dv f^{(\ii_k,\ii_{k-1})}_h(0,0) &=&  \frac{p}{h} \alpha^{(\ii_k,\ii_{k-1})}(0)\, d_{1,1} - \beta^{(\ii_k,\ii_{k-1})}(0) \, d_{0,2} \\
  && \; +  \frac{p}{h}  (\alpha^{(\ii_k,\ii_{k-1})})'(0)\, d_{1,0} - (\beta^{(\ii_k,\ii_{k-1})})'(0) \, d_{0,1} ;
\end{array}
\end{equation}
using the abbreviated notation 
\begin{equation*}
  \f t^{(\ii_k)}(\xi) = \Dv \f F^{(\ii_{k-1})}(0,\xi) = \Du \f F^{(\ii_{k+1})}(\xi,0)
\end{equation*}
and 
\begin{equation*}
\begin{array}{lll}
  \fn^{(\ii_k)}(\xi) &=& \frac{1}{\alpha^{(\ii_k,\ii_{k-1})}(\xi)}\left( \Du \f F^{(\ii_{k-1})}(0,\xi) + \beta^{(\ii_k,\ii_{k-1})}(\xi) \, \Dv \f F^{(\ii_{k-1})}(0,\xi)\right) \\
  &=& -\frac{1}{\alpha^{(\ii_k,\ii_{k+1})}(\xi)}\left( \Dv \f F^{(\ii_{k+1})}(\xi,0) + \beta^{(\ii_k,\ii_{k+1})}(\xi) \, \Du \f F^{(\ii_{k+1})}(\xi,0)\right)
\end{array}
\end{equation*}
we can determine all $d_{i,j}$ from the interpolation conditions
\begin{displaymath}
\begin{array}{lll}
  f^{(\ii_k,\ii_{k-1})}_h(0,0) &=& f^{(\ii_{k-1})}(0,0) , \\
  \Dv f^{(\ii_k,\ii_{k-1})}_h(0,0)  &=&\Dv f^{(\ii_{k-1})}(0,0) , \\
  \Dv^2 f^{(\ii_k,\ii_{k-1})}_h(0,0) &=&  \Dv^2 f^{(\ii_{k-1})}(0,0) , \\
  \Du f^{(\ii_k,\ii_{k-1})}_h(0,0)  &=& \Du f^{(\ii_{k-1})}(0,0) , \\
  \Du\Dv f^{(\ii_k,\ii_{k-1})}_h(0,0) &=&   \Du\Dv f^{(\ii_{k-1})}(0,0) 
\end{array}
\end{displaymath}
and   both \eqref{eq:fphiF} and \eqref{eq:derivatives_for_f}:
\begin{equation}\label{eq:dphi}
\begin{array}{lll}
  d_{0,0} &=& \phi_{0,0}, \\
  d_{0,1} &=& \nabla \phi \; \f t^{(\ii_k)}(0), \\
  d_{0,2} &=& (\f t^{(\ii_k)}(0))^T 
  \; H\phi \; \f t^{(\ii_k)}(0)
  + \nabla\phi \; (\f t^{(\ii_k)})'(0), \\
   \frac{p}{h}  d_{1,0} &=& \nabla \phi \; \fn^{(\ii_k)}(0), \\
   \frac{p}{h}  d_{1,1} &=& (\f t^{(\ii_k)}(0))^T \; H\phi \; \fn^{(\ii_k)}(0) + \nabla\phi \;(\fn^{(\ii_k)})'(0). 
\end{array}
\end{equation}
Hence, there exists a projector $\Pi_{\lW{k}}$ satisfying
\eqref{eq:vertex_interpolation_k-1}. 
We can reason similarly on \eqref{eq:change-basis-k+1}, where we have
\begin{equation}\label{eq:fd}
\begin{array}{lll}
  f^{(\ii_k,\ii_{k+1})}_h(0,0) &=& d_{0,0}, \\
  \Du f^{(\ii_k,\ii_{k+1})}_h(0,0)  &=& d_{0,1},\\
  \Du\Du f^{(\ii_k,\ii_{k+1})}_h(0,0) &=& d_{0,2},\\
  \Dv f^{(\ii_k,\ii_{k+1})}_h(0,0)  &=& -  \frac{p}{h} \alpha^{(\ii_k,\ii_{k+1})}(0)\, d_{1,0} - \beta^{(\ii_k,\ii_{k+1})}(0) \, d_{0,1},\\
  \Du\Dv f^{(\ii_k,\ii_{k+1})}_h(0,0) &=& -  \frac{p}{h} \alpha^{(\ii_k,\ii_{k+1})}(0)\, d_{1,1} - \beta^{(\ii_k,\ii_{k+1})}(0) \, d_{0,2} \\
  && \; - \frac{p}{h}  (\alpha^{(\ii_k,\ii_{k+1})})'(0)\, d_{1,0} - (\beta^{(\ii_k,\ii_{k+1})})'(0) \, d_{0,1}.
\end{array}
\end{equation}
In fact, after inserting \eqref{eq:dphi} into \eqref{eq:fd} and simplifying, we obtain \eqref{eq:fphiF} for $\ell=k+1$. 
Hence, the projector $\Pi_{\lW{k}}$ also satisfies \eqref{eq:vertex_interpolation_k+1}, which concludes the proof.
\end{proof}

\begin{proof}[Proof of Lemma \ref{lem:full-vertex-proj-parametric-intrpolation}]
We use again the abbreviated notation $
f^{(\ii_k)} = \varphi \circ \f
F^{(\ii_k)} $ and $
f^{(\ii_\ell,\ii_k)}_h= (\Pi_{\lW{l}}\varphi) \circ \f
F^{(\ii_k)}$, where by definition $f^{(\ii_\ell,\ii_k)}_h = 0 $ for $\ell
\neq k-1, k, k+1$.

 Then we have
\begin{equation}
  \label{eq:vertex_function_on_a_patch}
  (\Pi_{\W_{{\f x}^{(i)}}} \varphi)\circ \f F^{(\ii_{k})} =f^{(\ii_{k-1},\ii_k)}_h -f^{(\ii_{k},\ii_k)}_h +f^{(\ii_{k+1},\ii_k)}_h .
\end{equation}
The active degrees-of-freedom of three  terms in
\eqref{eq:vertex_function_on_a_patch} with respect to the underlying
tensor-product spline space $\Spr $ are pictured in Figure~\ref{fig:vertex_basis_representation}.
\begin{figure}[ht]
  \centering
  \begin{picture}(120,120)
    \put(0,0){\includegraphics[width=.25\textwidth]{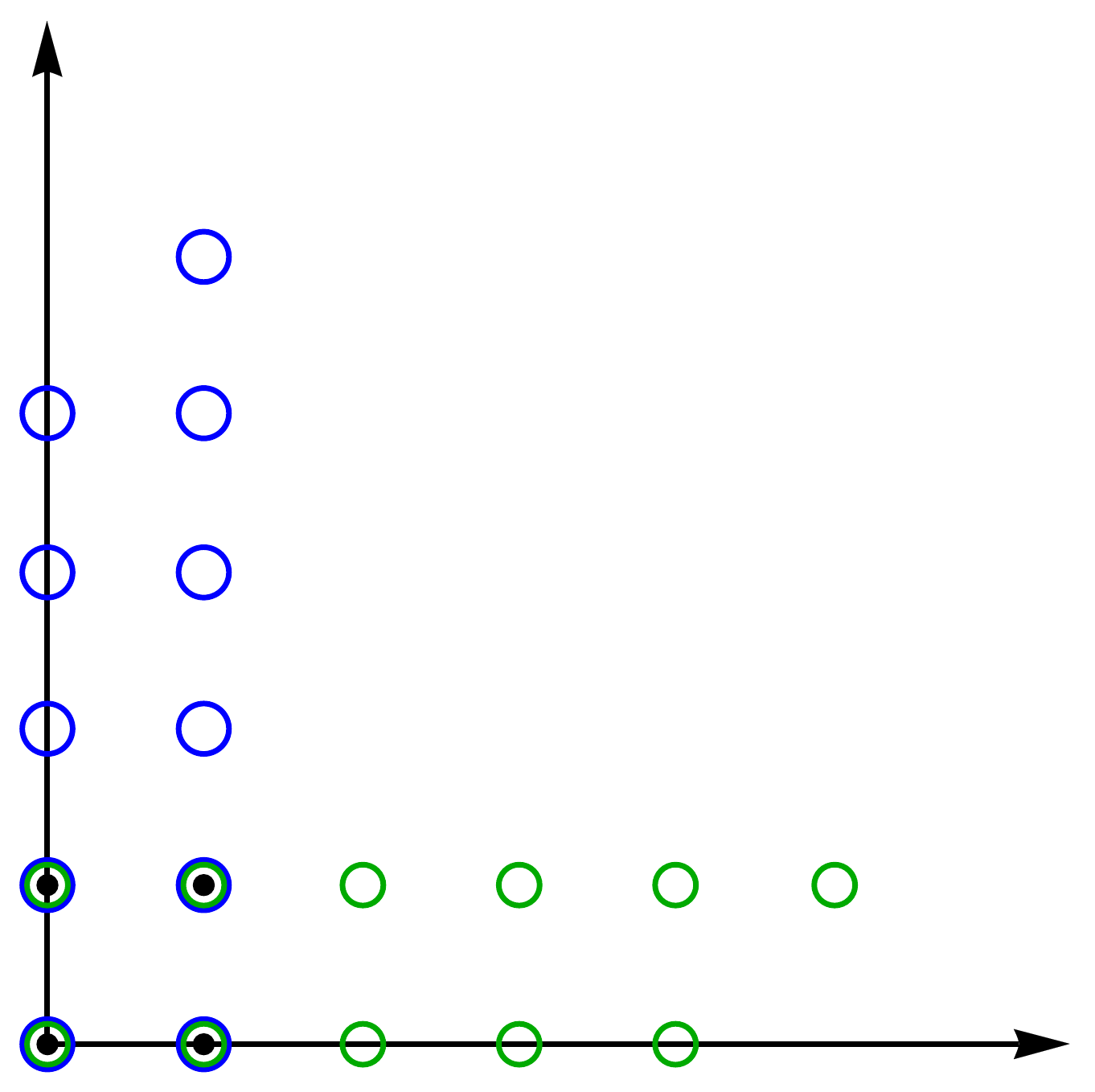}}
    \put(110,15){$\xi_1$}
    \put(15,110){$\xi_2$}
  \end{picture}
  \caption{Active degrees-of-freedom of the functions  in
\eqref{eq:vertex_function_on_a_patch} with respect to the 
tensor-product spline space $\Spr $: The degrees of freedom corresponding to $f^{(\ii_{k-1},\ii_k)}_h $ depicted in green, corresponding to 
$f^{(\ii_{k},\ii_k)}_h $ in black and corresponding to $f^{(\ii_{k+1},\ii_k)}_h $ in blue.}
  \label{fig:vertex_basis_representation}
\end{figure}

By definition and thanks  to the 
interpolation properties
\eqref{eq:intrpolation-prop-for-corner-subspace} and
\eqref{eq:vertex_interpolation_k-1}--\eqref{eq:vertex_interpolation_k+1}
we have:
\begin{equation*}
  \label{eq: f^{(i_{k+1},i_{k})}}
  \begin{aligned}
       f^{(\ii_{k+1},\ii_{k})}_h(0,0) &=  f^{(\ii_{k})}(0,0) , \\
  \Dv f^{(\ii_{k+1},\ii_{k})}_h(0,0)  &= \Dv f^{(\ii_{k})}(0,0) , \\
  \Dv^2 f^{(\ii_{k+1},\ii_{k})}_h(0,0) &=   \Dv^2 f^{(\ii_{k})}(0,0) , \\
  \Du f^{(\ii_{k+1},\ii_{k})}_h(0,0)  &=  \Du f^{(\ii_{k})}(0,0) , \\
  \Du\Dv f^{(\ii_{k+1},\ii_{k})}_h(0,0) &=    \Du\Dv f^{(\ii_{k})}(0,0), \\
 \Du^2 f^{(\ii_{k+1},\ii_{k})}_h(0,0) &=   0,
  \end{aligned}
\end{equation*}
and
\begin{equation*}
  \label{eq: f^{(i_{k},i_{k})}}
  \begin{aligned}
  f^{(\ii_k,\ii_{k})}_h(0,0) &=  f^{(\ii_{k})}(0,0) , \\
  \Du f^{(\ii_k,\ii_{k})}_h(0,0)  &=  \Du f^{(\ii_{k})}(0,0) , \\
  \Dv f^{(\ii_k,\ii_{k})}_h(0,0)  &= \Dv f^{(\ii_{k})}(0,0) , \\
  \Du\Dv f^{(\ii_k,\ii_{k})}_h(0,0) &=    \Du\Dv f^{(\ii_{k})}(0,0),\\
 \Du^2 f^{(\ii_k,\ii_{k})}_h(0,0) &=  0 , \\
  \Dv^2 f^{(\ii_k,\ii_{k})}_h(0,0) &=  0,
  \end{aligned}
\end{equation*}
and 
\begin{equation*}
  \label{eq: f^{(i_{k-1},i_{k})}}
  \begin{aligned}
   f^{(\ii_{k-1},\ii_{k})}_h(0,0) &=  f^{(\ii_{k})}(0,0) , \\
  \Du f^{(\ii_{k-1},\ii_{k})}_h(0,0)  &=  \Du f^{(\ii_{k})}(0,0) , \\
  \Du^2 f^{(\ii_{k-1},\ii_{k})}_h(0,0) &=   \Du^2 f^{(\ii_{k})}(0,0) , \\
  \Dv f^{(\ii_{k-1},\ii_{k})}_h(0,0)  &= \Dv f^{(\ii_{k})}(0,0) , \\
  \Du\Dv f^{(\ii_{k-1},\ii_{k})}_h(0,0) &=    \Du\Dv f^{(\ii_{k})}(0,0),
  \\  \Dv^2 f^{(\ii_{k-1},\ii_{k})}_h(0,0) &=  0.
  \end{aligned}
\end{equation*}

Using the above  relations into \eqref{eq:vertex_function_on_a_patch}
we get~\eqref{eq:vertex_interpolation_sum_parametric}.
  
Finally, if  $ f^{(\ii_{k})}$ vanishes, then all the derivatives above
are null and, by definition, $  f^{(\ii_{k-1},\ii_{k})}_h$, $
f^{(\ii_{k},\ii_{k})}_h$ and  $  f^{(\ii_{k+1},\ii_{k})}_h$ are null.
\end{proof}

\section{Extension to non-uniform knots and (partially) matching meshes}\label{appendix:a}

Note that one can extend the presented construction easily to multi-patch domains 
with non-uniform meshes and partially matching interfaces. We will briefly sketch the necessary adaptions. 
We assume to have different spline spaces $\mathcal{S}^{(i)}$ for every patch $\Omega^{(i)}$ with $i\in\indexOmega$. 
Every space satisfies 
$$
  \mathcal{S}^{(i)} = \mathcal{S}^{(i)}_1 \otimes \mathcal{S}^{(i)}_2,
$$
where $\mathcal{S}^{(i)}_k$ is a univariate spline space of degree $p$ and regularity $r$, having $n_k^{(i)}$ distinct inner knots 
$$
  0<\eta^{(i)}_{k,1}<\eta^{(i)}_{k,2}<\ldots<\eta^{(i)}_{k,n_k^{(i)}-1}<1,
$$
each with multiplicity $p-r$ and having $0$ and $1$ as boundary knots with multiplicity $p+1$.

Having defined different spaces for every patch, we change Assumption \ref{ass:Omega} and Definition \ref{defi:V0-V1} and assume 
$\f F^{(i)}\in \mathcal{S}^{(i)} \times \mathcal{S}^{(i)}$
as well as $f^{(i)}_h = \varphi_h\circ \f F^{(i)}\in \mathcal{S}^{(i)}$. Now, in order to have a sufficiently large 
$C^0$  isogeometric space along every interface, we need that the knot meshes are (partially) matching along all interfaces. 
\begin{assumption}
Consider an interface $\Sigma^{(i)}$, with $i\in\indexSigmaint$.
  Assume $\f F^{(\ii_1)}$, $\f F^{(\ii_2)}$  are in standard
 form for $\Sigma^{(i)}$. Then the corresponding meshes are 
\begin{itemize}
 \item matching, i.e., $\mathcal{S}^{(\ii_1)}_2 = \mathcal{S}^{(\ii_2)}_1$, or 
 \item partially matching, i.e., $\mathcal{S}^{(\ii_1)}_2 \subseteq \mathcal{S}^{(\ii_2)}_1$ or $\mathcal{S}^{(\ii_2)}_1 \subseteq \mathcal{S}^{(\ii_1)}_2$.
\end{itemize}
\end{assumption}
Note that two meshes are matching along an interface, if the corresponding knots are the same. The meshes are partially matching along an interface, 
if the knots of one patch are a subset of the knots of the other.

The space $\W$ can be constructed just as for uniform meshes. The patch-interior basis (Definition \ref{lem:patch-space}) needs 
no additional modification. The edge-interior basis (Definition \ref{def:edge-space}) uses spaces $\mathcal{S}^+$ and $\mathcal{S}^-$,
which are built from $\mathcal{S}^{(\ii_1)}_2 \cap \mathcal{S}^{(\ii_2)}_1$ by reducing the knot multiplicity by one or reducing the polynomial degree by one, respectively. 
See \cite{KaSaTa17} for a construction of the complete basis for non-uniform knots. 
The vertex basis (Definition \ref{def:vertex-basis}) is defined as a linear combination of patch and edge contributions and can be constructed 
analogously.

\section{Another approximating subspace $\widetilde{\W} \subseteq \mathcal{V}^1$} \label{sec:another_space}

Instead of the space $\W$ given in \eqref{eq:W}, one can consider a slightly larger subspace $\widetilde{\W} \subseteq \mathcal{V}^{1}$, which contains all isogeometric functions 
which are $C^2$  at the interior vertices and boundary vertices of valency~$\nu \geq 3$ and are $C^1$ everywhere else. The space~$\widetilde{\W}$ is given by
\begin{equation*}
\widetilde{\W} = 	\left(\bigoplus_{i\in\indexOmega}\widetilde{\W}^{\circ}_{\Omega^{(i)}}\right)
	\oplus\left(\bigoplus_{i\in\indexSigmaint}\widetilde{\W}^{\circ}_{\Sigma^{(i)}}\right)
	\oplus\left(\bigoplus_{i\in\indexXint \cup \widetilde{\mathcal{I}}^{\Gamma}_{\mathcal{X}}}\W_{{\f x}^{(i)}}\right),
\end{equation*}
where the indices in $\widetilde{\mathcal{I}}^{\Gamma}_{\mathcal{X}} \subseteq \indexXbound$ represents all boundary vertices of valency~$\nu \geq 3$. In contrast to~\eqref{eq:W}, 
different patch interior spaces, denoted by $\widetilde{\W}^{\circ}_{\Omega^{(i)}}$, and different edge function spaces, denoted by $\widetilde{\W}^{\circ}_{\Sigma^{(i)}}$, are used 
to generate the space~$\widetilde{\W}$. A further difference is that the new edge function space $\widetilde{\W}^{\circ}_{\Sigma^{(i)}} $ will be now only taken for all interfaces, 
and that the vertex function space~$\W_{{\f x}^{(i)}}$ have to be only
selected for all interior vertices and for all boundary vertices of
valency~$\nu \geq 3$. Below, we will present the 
definitions of the spaces $\widetilde{\W}^{\circ}_{\Omega^{(i)}}$ and $\widetilde{\W}^{\circ}_{\Sigma^{(i)}}$, which will be similar to the ones for the spaces 
$\W^{\circ}_{\Omega^{(i)}}$ and $\W^{\circ}_{\Sigma^{(i)}}$, see Definition~\ref{lem:patch-space} and \ref{def:edge-space}, respectively. 

Before, we will need some additional 
assumptions and definitions. We assume that in case of $\beta^{(i)} \equiv 0$ for $\Sigma^{(i)}$, $i \in \indexSigmaint$, the functions $\beta^{(i,\ii_1)}$ and $\beta^{(i,\ii_2)}$ are 
selected as $\beta^{(i,\ii_1)} \equiv \beta^{(i,\ii_1)} \equiv 0$. For each $\Sigma^{(i)}$, $i \in \indexSigmaint$, let 
\[
z_{\beta}^{(i)} = \{\xi_0 \in {h,\ldots, (n-1)h} \; | \; \beta^{(i)}(\xi_0) =0 \} ,
\text{ } h^{(i)}_\beta = \left\{ \begin{array}{ll}
                               0  & \text{ if } \beta^{(i)} \equiv 0, \\
                               1  & \text{ otherwise,}
                                \end{array} \right. 
\]
and
\[
 d^{(i)}_{\alpha} = \max (\deg (\alpha^{(i,\ii_1)}), \deg (\alpha^{(i,\ii_2)})).
\]
Since $\alpha^{(i,\ii_1)}$ and $\alpha^{(i,\ii_2)}$ are linear polynomials, and $\beta^{(i)}$ is a quadratic polynomial, we obtain that 
$d^{(i)}_{\alpha} \in \{0,1 \}$ and $z_{\beta}^{(i)} \in \{0,1,2, n\}$, cf. \cite{KaSaTa17}. For each $\ell \in \{ 1, \ldots, n-1 \}$, we denote by $\mathcal{S}^{p,r}_{h,\ell}$ the 
univariate spline space of degree~$p$ on the parameter domain $[0,1]$, constructed from the open knot vector with $n$ non-empty knots spans with (mesh) size $h=1/n$, where the inner 
knots $i h$, $i \in \{1, \ldots, n-1 \}$ with $i \neq \ell$, have multiplicity $p-r$, and the inner knot $\ell h$ has multiplicity $p-r+1$. This means that functions of the 
space~$\mathcal{S}_{h,\ell}^{p,r}$ are $C^r$  on $[0,1]$ except at the inner knot $\ell h$, where they are only $C^{r-1}$ .

We first define the patch interior space~$\widetilde{\W}^{\circ}_{\Omega^{(i)}} \supseteq \W^{\circ}_{\Omega^{(i)}} $. In contrast to the space 
$\W^{\circ}_{\Omega^{(i)}}$, the isogeometric functions of the space $\widetilde{\W}^{\circ}_{\Omega^{(i)}}$ need not have vanishing values and gradients at possible 
boundary edges of the multi-path domain~$\Omega$, but still have vanishing 
values and gradients at the patch interfaces. 
\begin{definition}
Let $i \in \indexOmega$, we define the space~$\widetilde{\W}^{\circ}_{\Omega^{(i)}}$ as 
\begin{equation*}
  \widetilde{\W}^{\circ}_{\Omega^{(i)}} = \text{span}\,\left\{\basisfct{i}{\f j}:
     \overline \Omega^{(i)}  \rightarrow\RR\text{ such that }
    \basisfct{i}{\f j}\circ \f F^{(i)} = b_{\f j}, \mbox{ for } \f j \in
\widetilde{\mathbb{I}}^{\circ}_{\Omega^{(i)}} \right\} 
\end{equation*}
where the index set $\widetilde{\mathbb{I}}^{\circ}_{\Omega^{(i)}}$ takes all $\f j \in \mathbb{I}$ which do not belong to the $C^1$ data of an interface $\Sigma^{(\ii)}$, with $\ii \in \indexSigmaint$, or to the $C^2$ data of a vertex $\f x^{(\ii)}$, with $\ii \in \indexXint \cup \widetilde{\mathcal{I}}^{\Gamma}_{\mathcal{X}}$.
\end{definition}

The definition of the edge function space~$\widetilde{\W}^{\circ}_{\Sigma^{(i)}} \supseteq \W^{\circ}_{\Sigma^{(i)}}$ is based on the construction of the 
(entire) $C^1$  isogeometric space for AS-$G^1$ two-patch geometries presented in \cite{KaSaTa17}.

\begin{definition}
 Let $\Sigma^{(i)}$, for $i\in\indexSigmaint$, be an interface in standard form. 
Consider  the univariate spline spaces $\widetilde{\mathcal{S}}^+ = \mathcal{S}^{p,r+h^{(i)}_{\beta}}_h$
and $\widetilde{\mathcal{S}}^- = \mathcal{S}^{p-d^{(i)}_{\alpha},r}_h$, with bases $\{ \widetilde{b}^+_j\}_{j \in \widetilde{\mathbb{I}}^+}$, and $\{\widetilde{b}^-_j\}_{j
  \in\widetilde{\mathbb{I}}^-}$, respectively, where 
$\widetilde{\mathbb{I}}^\pm=\{0,\ldots,\widetilde{N}^\pm-1\}$ with $\widetilde{N}^+= (p-r-h_{\beta}^{(i)})(n-1)+p+1$ and 
$\widetilde{N}^-=(p-d_{\alpha}^{(i)}-r)(n-1)+p-d_{\alpha}^{(i)}+1$. For each $\ell \in \{1, \ldots, n-1 \}$, let $\widetilde{b}^{\#}_{\ell}$ be a B-spline of the 
space $\mathcal{S}_{h,\ell}^{p,r}$ with the property $\widetilde{b}^{\#}_{\ell}(\ell h) \neq 0$ which have vanishing derivatives up to second order at both interface vertices. We define the index set
\[
 \widetilde{\mathbb{I}}^{\#}= \left\{ \begin{array}{ll}
                               \emptyset  & \text{ if } z_{\beta}^{(i)}=0 \text{ or } \beta \equiv 0, \\
                               \{\ell \in \{1, \ldots, n-1 \} \; |  \; \beta^{(i)}(\ell h) =0  \mbox{ and }\widetilde{b}^{\#}_{\ell}\mbox{ exists}\}& \text{ otherwise,}
                                \end{array} \right.
\]
and $\widetilde{\mathbb{I}}_{\Sigma^{(i)}}= (\widetilde{\mathbb{I}}^{+} \times \{0\}) \cup 
(\widetilde{\mathbb{I}}^{-} \times \{1\}) \cup (\widetilde{\mathbb{I}}^{\#} \times \{ 2\})$.
In addition, let $c_{0}=b_{0}+b_{1}$ and $c_{1}=\frac{h}{p}b_1$. We define the space $\widetilde{\W}_{\Sigma^{(i)}}$ as 
\begin{equation*}\label{eq:WSigma0Tilde}
  \widetilde{\W}_{\Sigma^{(i)}} = 
  \mbox{span}\, \left\{\basisfctSigmaTilde{i}{\f j}: \overline \Omega^{(\ii_1)}\cup \overline
\Omega^{(\ii_2)}  \rightarrow \mathbb{R}, \text { for }\f j \in \widetilde{\mathbb{I}}_{\Sigma^{(i)}}\right\},
\end{equation*} 
where
\begin{equation*}\label{eq:bj-edge-traceTilde}
  \begin{aligned}
    \basisfctSigmaTilde{i}{(j_1,0)}\circ \f F^{(\ii_1)}(\xi_1,\xi_2)&=
 \widetilde{b}^+_{j_1}(\xi_2)\ibasis_0(\xi_1) -  \beta^{(i,\ii_1)}(\xi_2) (\widetilde{b}^+_{j_1})'(\xi_2) \ibasis_1(\xi_1) ,\\
   \basisfctSigmaTilde{i}{(j_1,0)}\circ \f F^{(\ii_2)}(\xi_1,\xi_2)&=
 \widetilde{b}^+_{j_1}(\xi_1)\ibasis_0(\xi_2) -  \beta^{(i,\ii_2)}(\xi_1) (\widetilde{b}^+_{j_1})'(\xi_1) \ibasis_1(\xi_2) ,
  \end{aligned}
\end{equation*}
for $j_1 \in \widetilde{\mathbb{I}}^{+}$,
\begin{equation*}\label{eq:bj-edge-derivativetilde}
  \begin{aligned}
    \basisfctSigmaTilde{i}{(j_1,1)}\circ \f F^{(\ii_1)}(\xi_1,\xi_2)&=\alpha^{(i,\ii_1)}(\xi_2) \widetilde{b}^-_{j_1}(\xi_2) {b_1}(\xi_1) ,\\
   \basisfctSigmaTilde{i}{(j_1,1)}\circ \f F^{(\ii_2)}(\xi_1,\xi_2)&= - \alpha^{(i,\ii_2)}(\xi_1) \widetilde{b}^-_{j_1}(\xi_1) {b_1}(\xi_2) ,
  \end{aligned}
\end{equation*}
for $j_1 \in \widetilde{\mathbb{I}}^{-}$, and
\begin{equation*}\label{eq:bj-edge-Additional}
  \begin{aligned}
    \basisfctSigmaTilde{i}{(j_1,2)}\circ \f F^{(\ii_1)}(\xi_1,\xi_2)&=
 \widetilde{b}^{\#}_{j_1}(\xi_2)\ibasis_0(\xi_1) -  \beta^{(i,\ii_1)}(\xi_2) (\widetilde{b}^{\#}_{j_1})'(\xi_2) \ibasis_1(\xi_1)  + 
 \frac{\beta^{(i,\ii_1)}(j_1 h)}{\alpha^{(i,\ii_1)}(j_1 h)}  \alpha^{(i,\ii_1)}(\xi_2) (\widetilde{b}^{\#}_{j_1})'(\xi_2) \ibasis_1(\xi_1),\\
   \basisfctSigmaTilde{i}{(j_1,2)}\circ \f F^{(\ii_2)}(\xi_1,\xi_2)&=
 \widetilde{b}^{\#}_{j_1}(\xi_1)\ibasis_0(\xi_2) -  \beta^{(i,\ii_2)}(\xi_1) (\widetilde{b}^{\#}_{j_1})'(\xi_1) \ibasis_1(\xi_2) 
  +  \frac{\beta^{(i,\ii_2)}(j_1 h)}{\alpha^{(i,\ii_2)}(j_1 h)} \alpha^{(i,\ii_2)}(\xi_1) (\widetilde{b}^{\#}_{j_1})'(\xi_1) \ibasis_1(\xi_2),
  \end{aligned}
\end{equation*}
for $j_1 \in \widetilde{\mathbb{I}}^{\#}$. Let then $\widetilde{\W}^{\circ}_{\Sigma^{(i)}}$ be the subspace of $\widetilde{\W}_{\Sigma^{(i)}}$, given by 
\begin{equation*}
  \widetilde{\W}^{\circ}_{\Sigma^{(i)}} = 
  \{ \varphi_h\in \widetilde{\W}_{\Sigma^{(i)}}: \partial_{x_1}^{m_1}\partial_{x_2}^{m_2}\varphi_h(\f x^{(\ii)})=0 \mbox{ for all }\ii\in \indexXint \cup \widetilde{\mathcal{I}}^{\Gamma}_{\mathcal{X}} \mbox{ and } m_1,m_2 \geq 0 \mbox{ and }m_1+m_2 \leq 2\}.
\end{equation*} 
\end{definition}

\begin{remark}
In contrast to the subspace~$\W$, the dimension of the subspace~$\widetilde{\W}$ depends on the domain parametrization. Let $\widehat{\W}^{\circ}_{\Omega^{(i)}} = \widetilde{\W}^{\circ}_{\Omega^{(i)}}$ and let
\begin{equation*}
  \widehat{\W}^{\circ}_{\Sigma^{(i)}} = 
  \{ \varphi_h\in {\W}_{\Sigma^{(i)}}: \partial_{x_1}^{m_1}\partial_{x_2}^{m_2}\varphi_h(\f x^{(\ii)})=0 \mbox{ for all }\ii\in \indexXint \cup \widetilde{\mathcal{I}}^{\Gamma}_{\mathcal{X}}, \mbox{and } m_1,m_2 \geq 0 \mbox{ and }m_1+m_2 \leq 2\}.
\end{equation*}  
Then, the subspace
\[
\widehat{\W} = 	\left(\bigoplus_{i\in\indexOmega}\widehat{\W}^{\circ}_{\Omega^{(i)}}\right)
	\oplus\left(\bigoplus_{i\in\indexSigmaint}\widehat{\W}^{\circ}_{\Sigma^{(i)}}\right)
	\oplus\left(\bigoplus_{i\in\indexXint \cup \widetilde{\mathcal{I}}^{\Gamma}_{\mathcal{X}}}\W_{{\f x}^{(i)}}\right)
\]
with $\W \subseteq \widehat{\W} \subseteq \widetilde{A} \subseteq \V^{1} $, would be another choice of a $C^1$  isogeometric subspace, and its dimension is as the dimension of 
the subspace~$\W$ independent of the domain parametrization.
\end{remark}

\section*{References}

\end{document}